\renewcommand{\le}{\leqslant}
\renewcommand{\ge}{\geqslant}
\newcommand{\ptl}{\partial}
\newcommand{\rr}{{\mathbb{R}}}
\newcommand{\la}{\lambda}
\newcommand{\hh}{H}
\newcommand{\h}{H}
\newcommand{\esf}{\mathbb{S}}
\newcommand{\nn}{\mathbb{N}}
\newcommand{\pp}{P}
\newcommand{\Sg}{\Sigma}
\newcommand{\sg}{\sigma}
\newcommand{\Om}{\Omega}
\newcommand{\eps}{\varepsilon}
\newcommand{\de}{\delta}
\newcommand{\vol}[1]{|#1|}
\newcommand{\cl}[1]{\text{\rm cl}(#1)}
\newcommand{\clb}{\mkern2mu\overline{\mkern-2mu B\mkern-2mu}\mkern2mu}
\DeclareMathOperator{\divv}{div}
\DeclareMathOperator{\intt}{int}
\DeclareMathOperator{\Lip}{Lip}
\DeclareMathOperator{\diam}{diam}
\DeclareMathOperator{\Ric}{Ric}
\DeclareMathOperator{\II}{II}
\newtheorem{theorem}{Theorem}[section]
\newtheorem{proposition}[theorem]{Proposition}
\newtheorem{lemma}[theorem]{Lemma}
\newtheorem{corollary}[theorem]{Corollary}
\theoremstyle{definition}
\newtheorem{remark}[theorem]{Remark}
\theoremstyle{remark}
\newenvironment{enum}{\begin{enumerate}
}{\end{enumerate}}
\numberwithin{equation}{section}
\begin{document}

\title{Isoperimetric inequalities in conically bounded convex bodies}

\author[M.~Ritor\'e]{Manuel Ritor\'e} \address{Departamento de
Geometr\'{\i}a y Topolog\'{\i}a \\
Universidad de Granada \\ E--18071 Granada \\ Espa\~na}
\email{ritore@ugr.es}
\author[S.~Vernadakis]{Efstratios Vernadakis} \address{Departamento de
Geometr\'{\i}a y Topolog\'{\i}a \\
Universidad de Granada \\ E--18071 Granada \\ Espa\~na}
\email{stratos@ugr.es}

\date{\today}

\thanks{Both authors have been supported by MICINN-FEDER grant MTM2010-21206-C02-01, and Junta de Andaluc\'{\i}a grants FQM-325 and P09-FQM-5088}

\begin{abstract}
We consider the problem of minimizing the relative perimeter under a volume constraint in the interior of a conically bounded convex set, i.e., an unbounded convex body admitting an \emph{exterior} asymptotic cone. Results concerning existence of isoperimetric regions, the behavior of the isoperimetric profile for large volumes, and a characterization of isoperimetric regions of large volume in conically bounded convex sets of revolution is obtained.
\end{abstract}

\subjclass[2000]{49Q10, 49Q20, 52B60}
\keywords{Isoperimetric inequalities, conically bounded convex bodies, isoperimetric profile, isoperimetric regions}

\maketitle

\thispagestyle{empty}

\bibliographystyle{abbrv}


\section{Introduction}
Throughout this paper we shall denote by $C\subset\rr^{n+1}$ an unbounded closed convex set with non-empty interior. We shall call such a set an \emph{unbounded convex body}. We are interested in the isoperimetric problem of minimizing the relative perimeter in the interior of $C$ under a volume constraint, specially for large volumes. The \emph{isoperimetric profile} of $C$ is the function $I_C:(0,+\infty)\to\rr^+$ assigning to any $v>0$ the infimum of the perimeter of sets of volume $v$. An \emph{isoperimetric region} $E\subset C$ is one whose perimeter equals $I_C(|E|)$, where $|E|$ is the volume of $E$. This implies $P(F)\ge P(E)$ for any $F\subset C$ such that $|F|=|E|$.

Given an unbounded convex body $C$, a classical notion in the theory of convex sets~is  that of the \emph{asymptotic cone} of $C$, or tangent cone at infinity, defined by $C_\infty=\bigcap_{\la>0}\la C$. We shall say that $C_\infty$ is \emph{non-degenerate} when $\dim C_\infty=\dim C=n+1$.  Assuming $C$ has a non-degenerate asymptotic cone, we can extract useful information on the isoperimetric profile $I_C$ of $C$ but, unfortunately, we need a stronger control on the large scale geometry of $C$ to get a more precise information on the geometry of large isoperimetric regions  in $C$. Thus we are led to consider \emph{conically bounded convex sets}. We shall say that a convex set $C$ is conically bounded if there exists a non-degenerate cone $C^\infty$ containing $C$, the \emph{exterior asymtotic cone} of $C$, so that the Hausdorff distance of $C_t=C\cap\{x_{n+1}=t\}$ and $(C^\infty)_t$ goes to zero when $t$ goes to infinity. When $C$ is conically bounded, $C^\infty$ coincides with $C_\infty$ up to translation. There are examples of convex sets $C$ with non-degenerate asymptotic cone that are not conically bounded.

Previous results on the isoperimetric profile of \emph{cylindrically bounded} convex bodies have been obtained by the authors in \cite{rv3}. For \emph{bounded} convex bodies we refer to \cite{rv1} and the references there. In convex cones, this isoperimetric problem has been considered by Lions and Pacella \cite{lions-pacella}, Ritoré and Rosales \cite{r-r} and Figalli and Indrei \cite{FI}. Outside convex bodies, possibly unbounded, isoperimetric inequalities have been established by Choe and Ritoré \cite{MR2338131}, and Choe, Ghomi and Ritoré \cite{MR2215458}, \cite{MR2329803}.

We have organized this paper into several sections. In Section~\ref{sec:preliminaries}, we fix the notation we shall use and give the appropriate background. In particular, we discuss the relation between conically bounded convex sets and unbounded convex bodies with non-degenerate asymptotic cone in Lemma~\ref{lem:C_infty=C^infty} and Remark~\ref{rem:C^_infty}. We also give the necessary background on finite perimeter sets.

In Section~\ref{sec:nondegenerate}, we consider convex bodies $C$ with non-degenerate asymptotic cone $C_\infty$ and we prove in Theorem~\ref{thm:optimaconi} that the isoperimetric profile $I_C$ of $C$ is always bounded from below by the isoperimetric profile of $I_{C_\infty}$, and that $I_C$ and $I_{C_\infty}$ are asymptotic. Inequality $I_C\ge I_{C_\infty}$ is interesting since it implies that the isoperimetric inequality of the convex cone $C_\infty$ also holds in $C$, although it is not sharp in general. We also show the continuity of the isoperimetric profile of $C$ in Lemma~\ref{lem:contprofconi}. 

In Section~\ref{sec:conically}, we consider conically bounded convex bodies with smooth boundary. The boundary of its exterior asymptotic cone out of the vertex is not regular in general as it follows from the discussion at the beginning of Section~\ref{sec:conically}. Assuming the regularity of this convex cone, we prove existence of isoperimetric regions for all volumes in Proposition~\ref{prp: exist isop reg conical}, and the concavity of the isoperimetric profile $I_C$ and of its power $I_C^{(n+1)/n}$ in Proposition~\ref{prp: I C is concave conical}. It is well-known \cite{MR2008339} that the concavity of $I_C^{(n+1)/n}$ implies the connectedness of isoperimetric regions in $C$. In a similar way to \cite{rv1} we prove a ``clearing-out'' result in Proposition~\ref{prp:leon rigot lem 42 coni}, and a lower density bound in Corollary~\ref{cor:lwrdnbnd}, that allow us to show in Theorem~\ref{thm:convrescnoi} a key convergence result: if we have a sequence isoperimetric regions in $C$ whose volumes go to infinity, then scaled them down to have constant volume, we have convergence of the scaled isoperimetric regions in \emph{Hausdorff distance} to a ball in the exterior asymptotic cone. Moreover, the boundaries of the scaled isoperimetric regions also converge in Hausdorff distance to the spherical cap that bounds this ball. This convergence can be improved to higher order convergence using Allard type estimates for varifolds using the estimate in Lemma~\ref{lem:boundmeancurv}.

In Section~\ref{sec:revolution}, we consider conically bounded sets of revolution. These sets are foliated, out of a compact set, by a family of spherical caps whose mean curvatures go to $0$ by Lemma~\ref{lem:foliation}. Using the results in the previous Section and an argument based on the Implicit Function Theorem, we show in Theorem~\ref{thm:foliation} that large isoperimetric regions are spherical caps meeting the boundary of the unbounded convex body in an orthogonal way.

The authors would like to thank the referee for the useful comments.

\section{Preliminaries}
\label{sec:preliminaries}


\subsection{Convex sets}
An \emph{unbounded convex body} $C\subset\rr^{n+1}$ will be a closed unbounded convex set with non-empty interior. A \emph{convex body} $C\subset\rr^{n+1}$ is a \emph{compact} convex set with non-empty interior. The dimension of a convex set $C\subset\rr^{n+1}$ is the dimension of the smallest affine subspace of $\rr^{n+1}$ containing $C$ and will be denoted by $\dim C$. We refer the reader to Schneider's monograph \cite{sch} for background on convex sets and functions.

Given $x\in C$ and $r>0$, we define the \emph{intrinsic ball} $B_C(x,r)=C\cap B(x,r)$, and  the corresponding closed ball $\clb_C(x,r)=C\cap \clb(x,r)$. For $E \subset C$, the \emph{relative boundary} of $E$ in the interior of $C$ is $\ptl_{C}E=\ptl E\cap \intt {C}$.

We say that a sequence of closed sets $\{E_i\}_{i\in\nn}\subset\rr^{n+1}$ converges in \emph{pointed Hausdorff distance} to some closed set $E$ if there exist a point $p\in\rr^{n+1}$ so that $\{E_i\cap \clb(p,r)\}_{i\in\nn}$ converges in Hausdorff distance to $E\cap \clb(p,r)$ for all $r>0$. This property is almost independent of the point $p$. If $q\in\rr^{n+1}$ and $E_i\cap\clb(q,r)_{i\in\nn}$ is non-empty for large $i$ then, applying the Kuratwoski criterion \cite[Thm.~1.8.7]{sch}, one easily sees that $E_i\cap \clb(q,r)$ converges to $\clb(q,r)$ in Hausdorff distance.

We define the \emph{asymptotic cone} $C_{\infty}$ of an unbounded convex body $C$ containing $0$ by
\begin{equation}
\label{eq:cinfty}
C_{\infty} = \bigcap_{\lambda > 0} \lambda C,
\end{equation}
where $\lambda C = \{\lambda x : x\in C \} $ is the image of $C$ under the homothety of center $0$ and ratio $\la$. If $p\in C$ and $h_{p,\la}$ is the homothety of center $p$ and ratio $\la$ then $\bigcap_{\la>0} h_{p,\la}(C)=p+C_\infty$ is a translation of $C_\infty$. Hence the shape of the asymptotic cone is independent of the chosen origin. When $C$ is bounded the set $C_{\infty}$ defined by \eqref{eq:cinfty} is a point.
It is known that ${\la}C$ converges, in the pointed Hausdorff topology, to the asymptotic cone $C_{\infty}$ \cite{bbi} and hence it satisfies $\dim C_\infty\le\dim C$. We shall say that the asymptotic cone is \emph{non-degenerate} if $\dim C_{\infty}=\dim C$. The solid paraboloid $\{z\ge x^2+y^2\}$ and the cilindrically bounded convex set $\{z\ge (1-x^2-y^2)^{-1}: x^2+y^2<1\}$ are examples of unbounded convex bodies with the same degenerate asymptotic cone $C_\infty=\{(0,0,z):z\ge 0\}$.

We define the \emph{tangent cone} $C_{p}$ of a (possibly unbounded) convex body $C$ at a given boundary point $p\in\ptl C$ as the closure of the set
\[
\bigcup_{\la > 0} h_{p,\la} (C).
\]
Tangent cones of convex bodies have been widely considered in convex geometry under the name of supporting cones \cite[\S~2.2]{sch} or projection cones \cite{MR920366}. From the definition it follows that $C_p$ is the smallest cone, with vertex $p$, that includes $C$. 


Let $K\subset \rr^{n+1}$ be a closed convex cone with interior points. It is known that the geodesic balls centred at the vertex are isoperimetric regions in $K$, \cite{lions-pacella}, \cite{r-r}, and that they are the only ones \cite{FI} for general convex cones, without any regularity assumption on the boundary. The invariance of $K$ by dilations centered at some vertex yields
\begin{equation}
\label{eq:isopkon}
I_K(v)=I_K(1)\,v^{n/(n+1)}.
\end{equation}


Given a convex body $C\subset\rr^n$ containing $0$ in its interior, its \emph{radial function} $\rho(C,\cdot):\esf^n\to\rr$ is defined by
\[
\rho(C,u)=\max\{\la\ge 0:\la u\in C\}.
\]
It easily follows that $\rho(C,u)\,u\in \ptl C$ for all $u\in\esf^n$.

Let $C\subset \rr^{n+1}$ be an unbounded convex body that can be written as the epigraph of a non-negative convex function over the hyperplane $x_{n+1}=0$. We shall say that $C$ is a \emph{conically bounded convex body} if, for every $t\ge 0$, the set $C_t=C\cap\{x_{n+1}=t\}$ is a convex body in the hyperplane $\{x_{n+1}=t\}$, and there exists a non-degenerate convex cone $C^{\infty}$ including $C$ such that
\begin{equation}
\label{eq:def ex asymp con}
\lim_{t\to\infty} \max_{|u|=1} |\rho(C_t,u)-\rho((C_{\infty})_t,u)|=0.
\end{equation}
We shall call $C^{\infty}$ the \emph{exterior asymptotic cone} of $C$. Because of our assumption of compactness of the slices $C_t$, the exterior asymptotic cone has a unique vertex. We have the following

\begin{lemma}
\label{lem:C_infty=C^infty}
Let $C\subset\rr^{n+1}$ be a conically bounded convex body. Then $C_{\infty}$ and $C^{\infty}$ coincide up to translation.
\end{lemma}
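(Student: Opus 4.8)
The plan is to compare the two cones slice by slice and use the convergence of dilations to the asymptotic cone. Recall $C_{\infty}=\bigcap_{\la>0}\la C$, and since $C$ is conically bounded it is the epigraph of a non-negative convex function over $\{x_{n+1}=0\}$; after translating, assume the vertex of $C^{\infty}$ is the origin, so $C^{\infty}$ is a genuine cone with vertex $0$. We first observe $C_{\infty}\subset C^{\infty}$: since $C\subset C^{\infty}$ and $C^{\infty}$ is a cone with vertex $0$, we have $\la C\subset\la C^{\infty}=C^{\infty}$ for every $\la>0$, hence $C_{\infty}=\bigcap_{\la>0}\la C\subset C^{\infty}$. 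Because both are convex cones with vertex $0$ and $C_{\infty}$ is non-degenerate by hypothesis (as $\dim C_\infty=\dim C=n+1$ when $C$ is conically bounded — this needs a brief justification from \eqref{eq:def ex asymp con}), it suffices to prove the reverse inclusion $C^{\infty}\subset C_{\infty}$, or equivalently to show the two cones have the same slices $(C_\infty)_t=(C^\infty)_t$ for $t>0$.

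Next I would identify the slices of $C_\infty$. Using that $\la C\to C_\infty$ in pointed Hausdorff distance, and that dilation by $\la$ maps $C_t$ to $(\la C)_{\la t}$, one gets: for fixed $t>0$, $(C_\infty)_t = \lim_{\la\to\infty}(\la C)_t = \lim_{\la\to\infty}\la\,C_{t/\la}$ (Hausdorff limit in the hyperplane $\{x_{n+1}=t\}$), provided these slices are eventually nonempty, which holds because $C$ is an epigraph over $\{x_{n+1}=0\}$ with compact slices. In terms of radial functions centered at the axis point, this reads $\rho((C_\infty)_t,u)=\lim_{s\to 0^+}\tfrac{t}{s}\,\rho(C_s,u)$ for each $u\in\esf^n$. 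On the other hand, the defining condition \eqref{eq:def ex asymp con} says $\rho(C_t,u)-\rho((C_\infty)_t,u)\to 0$ uniformly in $u$ as $t\to\infty$; combining this with the homogeneity $\rho((C_\infty)_t,u)=t\,\rho((C_\infty)_1,u)$ forces $\rho(C_t,u)/t\to\rho((C_\infty)_1,u)$ uniformly. Since $C^\infty$ is a cone with vertex $0$ and $C\subset C^\infty$, we have $\rho(C_t,u)\le \rho((C^\infty)_t,u)=t\,\rho((C^\infty)_1,u)$; dividing by $t$ and letting $t\to\infty$ gives $\rho((C_\infty)_1,u)\le\rho((C^\infty)_1,u)$, i.e. the inclusion already known. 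For the reverse, I would show $\rho(C_t,u)/t$ is monotone (nondecreasing) in $t$: this is exactly convexity of $C$ together with the epigraph structure — the chord from the vertex region outward — so its limit, which is $\rho((C_\infty)_1,u)$, dominates every finite value $\rho(C_t,u)/t$; but $\rho((C^\infty)_1,u)=\sup_{t>0}\rho(C_t,u)/t$ as well, because $C^\infty$ is the smallest cone with vertex $0$ containing $C$ (analogue of the tangent cone characterization). Hence $\rho((C_\infty)_1,u)=\rho((C^\infty)_1,u)$ for all $u$, so the cones agree slice by slice and therefore coincide.

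The main obstacle I anticipate is the interchange of limits and the monotonicity claim: one must be careful that the slices behave well under dilation (nonemptiness, compactness, and that Hausdorff convergence of the solids passes to Hausdorff convergence of the slices at a fixed height), and one must justify that $t\mapsto\rho(C_t,u)/t$ is genuinely monotone — this is where convexity of the epigraph enters, via the fact that for a convex epigraph the "radial slope" seen from near the base is nondecreasing along the axis. Once monotonicity is in hand, the sup/limit identifications make both radial functions equal to $\sup_{t>0}\rho(C_t,u)/t$, and the conclusion $C_\infty=C^\infty$ (up to the initial translation) follows immediately. I would also record that the uniform convergence in \eqref{eq:def ex asymp con} is what guarantees $\dim C_\infty=n+1$, so that the phrase "coincide up to translation" is not vacuous.
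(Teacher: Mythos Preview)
Your overall strategy---compare the two cones slice by slice via radial functions and use the asymptotic condition---is essentially the paper's approach in dual form (the paper works with the epigraph function $f$ and shows $f_\infty=f^\infty$, which is the same computation read through the inverse). However, several steps are stated with the wrong direction, and one step rests on a false characterization of $C^\infty$.

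First, the blow-down direction is reversed. The convergence $\la C\to C_\infty$ happens as $\la\to 0^+$, not $\la\to\infty$; with $0\in C$ the sets $\la C$ are nested \emph{decreasing} as $\la\downarrow 0$, and $C_\infty=\bigcap_{\la>0}\la C$. Consequently $(C_\infty)_t=\lim_{\la\to 0}\la\,\pi(C_{t/\la})$, which in your notation gives $\rho((C_\infty)_t,u)=\lim_{s\to\infty}(t/s)\,\rho(C_s,u)$, not $s\to 0^+$. Second, the monotonicity is reversed: with the natural centering ($f(0)=0$), convexity gives $f(\la x)\ge\la f(x)$ for $\la\ge 1$, hence $t\mapsto\rho(C_t,u)/t$ is \emph{non-increasing}; its limit at infinity is the infimum, not the supremum.

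The more serious gap is the assertion that $C^\infty$ is ``the smallest cone with vertex $0$ containing $C$''. That is the characterization of the \emph{tangent cone} $C_p=\overline{\bigcup_{\la>0}\la C}$ at a boundary point, not of the exterior asymptotic cone, which is defined only through the condition \eqref{eq:def ex asymp con}. Your formula $\rho((C^\infty)_1,u)=\sup_{t>0}\rho(C_t,u)/t$ does not follow from the definition, and together with the (incorrect) ``nondecreasing'' claim the two errors cancel only by accident. In fact this detour is unnecessary: you already extract $\rho(C_t,u)/t\to\rho((C^\infty)_1,u)$ directly from \eqref{eq:def ex asymp con} and homogeneity of $C^\infty$. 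Combined with the corrected blow-down identity $\rho((C_\infty)_1,u)=\lim_{t\to\infty}\rho(C_t,u)/t$, equality is immediate---no ``smallest cone'' argument and no monotonicity beyond existence of the limit are needed.

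One genuine subtlety you should address (and which the paper handles by working with $f$ rather than with $\rho$): the radial function of $C_t$ requires a center, and the point where $f$ attains its minimum need not coincide with the projected vertex of $C^\infty$. The paper normalizes so that $f(0)=0$ and computes $C_\infty$ as the epigraph of $f_\infty=\sup_\mu f_\mu$, then compares with $f^\infty$ via the ratio $f(\la x)/f^\infty(\la x)\to 1$; this avoids the centering ambiguity that your slice-by-slice radial approach must confront explicitly.
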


\begin{proof}
Assume $C$ is the epigraph of the convex function $f:\rr^n\to\rr^+$, and let $C^\infty$ be defined as the epigraph of the convex function $f^\infty:\rr^n\to\rr^+$. Since $C^\infty$ is a cone, assuming the origin is a vertex, we have $\la f^\infty(x)=f^\infty(\la x)$ for any $\la>0$ and $x\in\rr^n$.

Let us compute now the asymptotic cone $C_\infty$. From \eqref{eq:isopkon}, the point $(x,y)\in\rr^n\times\rr$ belongs to $C_\infty$ if and only if $(\mu x,\mu y)\in C$ for all $\mu>0$. This is equivalent to $y\ge \mu^{-1}\,f(\mu x)$ for all $\mu>0$. The family $\{f_\mu\}_{\mu>0}$, where $f_\mu$ is defined by $f_\mu(x)=\mu^{-1}\,f(\mu x)$, is composed of convex functions. The convexity of $f$ and the fact that $f(0)=0$ imply that $f_\mu(x)\le f_\beta(x)$ when $\mu\le\beta$. Hence the asymptotic cone of $C$ is the epigraph of the convex function $f_\infty=\sup_{\mu>0} f_\mu=\lim_{\mu\to\infty} f_\mu$. Observe that $\la f_\infty(x)=f_\infty(\la x)$ for all $\la>0$ and $x\in\rr^n$. Since $C\subset C^\infty$ we have $f\ge f^\infty$ and so
\[
f_\infty(x)\ge f_\mu(x)=\mu^{-1} f(\mu x)\ge \mu^{-1}f^\infty(\mu x)=f^\infty(x).
\]

Let us check now that $f_\infty=f^\infty$. Fix some $x\in\rr^n\setminus\{0\}$ and let $u=x/|x|$. Then $(x,f(x))\in\ptl C_{f(x)}$ and $\rho(C_{f(x)},u)=|x|$. If $\mu=f(x)/f^\infty(x)$ then $f^\infty(\mu x)=\mu f^\infty(x)=f(x)$. Hence $(\mu x,f^\infty(\mu x))$ belongs to $\ptl (C^\infty)_{f(x)}$, and $\rho((C^\infty)_{f(x)},u)$ is given by $\mu\,|x|=(f(x)/f_\infty(x))\, |x|$.
Hence we have
\[
|\rho(C_{f(x)},u)-\rho((C^\infty)_{f(x)},u)|=\bigg(\frac{f(x)}{f_\infty(x)}-1\bigg)\,|x|.
\]
Replacing $x$ by $\la x$ we get
\[
|\rho(C_{f(\la x)},u)-\rho((C^\infty)_{f(\la x)},u)|=\bigg(\frac{f(\la x)}{f_\infty(\la x)}-1\bigg)\,\la |x|.
\]
Letting $\la\to\infty$, we know that $f(\la x)$ converges to $\infty$ since $f(\la x)\ge \la f^\infty(x)$. By \eqref{eq:def ex asymp con} we obtain
\[
1=\lim_{\la\to +\infty}\frac{f(\la x)}{f^\infty(\la x)}=\lim_{\la\to +\infty}\frac{\la^{-1}f(\la x)}{\la^{-1}f^\infty(\la x)}=\frac{f_\infty(x)}{f^\infty(x)}.
\]
\end{proof}


\begin{remark}
\label{rem:C^_infty}
It is not difficult to produce examples of unbounded convex body with non-degenerate asymptotic cone which are not conically bounded. Simply consider the epigraph in $\rr^2$ of the convex function $f(x)=e^x-1$. Its asymptotic cone is the quadrant $x\le 0, y\ge 0$. On the other hand, there are no asymptotic lines to the graph of $f(x)$ when $x\to +\infty$.

Starting from this example we can produce higher dimensional ones: consider the reflection of $\{(x,f(x)): x\ge 0	\}$ with respect to the normal line $x+y=0$ to the graph~of $f(x)$ at $(0,0)$. This convex function can be used to produce higher dimensional unbounded convex bodies of revolution with non-degenerate asymptotic cone which are not conically bounded.
\end{remark}


\subsection{Sets of finite perimeter and isoperimetric regions}
The main references here are Giusti \cite{gi} and Maggi \cite{MR2976521}. Given $E \subset C$, we define the \emph{relative perimeter} of $ E$ in $\intt(C)$, by
\[
\pp_C(E) = \sup \Big \{ \int_E\divv \xi\, d{\h}^{n+1}, \xi \in \Gamma_0(C) ,\, |\xi| \le 1 \Big \},
\]
where $\Gamma_0(C)$ is the set of smooth vector fields with compact support in $\intt (C)$. We shall say that $E$ has \emph{finite perimeter} in $C$ if $\pp_C(E)<\infty$.

The \emph{volume} of $E$ is defined as the $(n+1)$-dimensional Hausdorff measure of $E$ and will be denoted by $|E|$. The $r$-dimensional Hausdorff measure will be denoted by $H^r$.

If $C,C'\subset\rr^{n+1}$ are convex bodies (possible unbounded) and $f:C\to C'$ is a Lipschitz map, then, for every $s>0$ and $E\subset C$, from the definition of Hausdorff measure, we get $\h^s(f(E)) \le \Lip(f)^s\, \h^s(E)$. This implies

\begin{lemma}
\label{lem:bilip}
Let $C,C'\subset\rr^{n+1}$ and $f:C\to C'$ a bilipschitz map then we have
\begin{equation}
\label{eq:bilip}
\begin{split}
\Lip(f ^{-1})^{-n}\,\pp_C(E) \le \pp_{f(C)}(f(E)) \le \Lip(f)^n\, \pp_C(E),
\\
\Lip(f ^{-1})^{-(n+1)}\,\vol{E} \le \vol{f(E)} \le \Lip(f)^{n+1}\,\vol{E}.
\end{split}
\end{equation}
\end{lemma}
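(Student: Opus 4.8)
The plan is to reduce both chains of inequalities to the single estimate $\h^s(f(E))\le \Lip(f)^s\,\h^s(E)$ recalled just before the statement, applied first to $f$ and then to $f^{-1}$, together with the fact that relative perimeter is, on sets of finite perimeter, a Hausdorff measure of the reduced boundary. More precisely, I would argue as follows.

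First, the volume inequalities. Apply the Hausdorff-measure Lipschitz estimate with $s=n+1$ to the map $f$ and the set $E$: this gives $\vol{f(E)}=\h^{n+1}(f(E))\le\Lip(f)^{n+1}\,\h^{n+1}(E)=\Lip(f)^{n+1}\,\vol{E}$, which is the right-hand inequality. For the left-hand inequality, apply the same estimate to $f^{-1}:C'\to C$ and the set $f(E)\subset C'$, obtaining $\vol{E}=\vol{f^{-1}(f(E))}\le\Lip(f^{-1})^{n+1}\,\vol{f(E)}$, and rearrange. This uses only that $f$ is a bijection between $C$ and $C'$ with $f^{-1}$ also Lipschitz, which is exactly the bilipschitz hypothesis.

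Next, the perimeter inequalities. The point is that if $E\subset C$ has finite relative perimeter in $\intt(C)$, then, identifying $E$ with its set of density one, one has $\pp_C(E)=\h^{n}(\ptl^* E\cap\intt C)$, where $\ptl^* E$ is the reduced (equivalently, essential) boundary of $E$; this is the De~Giorgi structure theorem as stated in Giusti~\cite{gi} or Maggi~\cite{MR2976521}. A bilipschitz map $f$ sends sets of finite perimeter to sets of finite perimeter and, up to $\h^{n}$-null sets, sends the reduced boundary of $E$ (relative to $\intt C$) onto the reduced boundary of $f(E)$ (relative to $\intt C'=f(\intt C)$), since $f$ and $f^{-1}$ are both locally Lipschitz homeomorphisms and hence preserve the notion of points of density $0$, $1$, and $1/2$ up to lower-dimensional error. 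Granting this, $\pp_{f(C)}(f(E))=\h^{n}(\ptl^*(f(E))\cap\intt C')=\h^{n}\big(f(\ptl^* E\cap\intt C)\big)\le\Lip(f)^{n}\,\h^{n}(\ptl^* E\cap\intt C)=\Lip(f)^{n}\,\pp_C(E)$, which is the right-hand inequality; the left-hand inequality follows by applying this to $f^{-1}$ and $f(E)$ and rearranging, exactly as in the volume case. (If $E$ does not have finite perimeter in $C$, the inequalities are read in $[0,+\infty]$ and, by the symmetric roles of $f$ and $f^{-1}$, finite perimeter of $E$ in $C$ is equivalent to finite perimeter of $f(E)$ in $C'$, so the statement is vacuously consistent.)

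The main obstacle is the perimeter part, specifically justifying that a bilipschitz map carries the reduced boundary of $E$ to (an $\h^n$-equivalent version of) the reduced boundary of $f(E)$, so that the scalar Hausdorff-measure distortion bound can be invoked; the volume part and the passage from $f$ to $f^{-1}$ are then routine. An alternative, perhaps cleaner, route that avoids touching the reduced boundary directly is to work from the distributional definition: given a competitor vector field $\xi\in\Gamma_0(C')$ with $|\xi|\le1$, pull it back by $f$ to a field on $C$ and use the area formula / change of variables for Lipschitz maps to compare $\int_{f(E)}\divv\xi$ with an integral over $E$ against the pulled-back field, controlling the $L^\infty$ norm of the latter by a power of $\Lip(f^{-1})$ and the Jacobian factor by $\Lip(f)^{n+1}$; taking the supremum then yields one of the perimeter inequalities, and the other follows by symmetry. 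Either way, the only analytic input beyond the cited Hausdorff-measure inequality is standard Lipschitz change-of-variables, so I expect the proof to be short.
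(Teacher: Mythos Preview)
Your argument is correct; the volume line is exactly the paper's ``well-known'' step, and your reduced-boundary route for the perimeter line is valid, the key ingredient being that a bilipschitz homeomorphism preserves the essential boundary (it preserves points of density $0$ and $1$ because it distorts ball volumes by bounded multiplicative factors), together with Federer's theorem identifying essential and reduced boundaries up to $\h^n$-null sets.

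The paper, however, takes a different and shorter route for the perimeter inequalities: it first observes the estimate for sets $E$ with smooth boundary in $\intt C$ (where $\pp_C(E)=\h^n(\ptl E\cap\intt C)$, and $f(\ptl E)$ is the topological boundary of the Lipschitz set $f(E)$, so the cited estimate $\h^n(f(A))\le\Lip(f)^n\h^n(A)$ applies directly), and then passes to general finite-perimeter $E$ by approximation with smooth sets. Your approach trades the approximation step for the structure theorem and the invariance of density points under bilipschitz maps; this is more intrinsic and avoids limits, but it invokes a heavier piece of GMT. The paper's approximation argument is lighter in citations but needs the (standard) fact that smooth approximants can be chosen so that both perimeter and volume converge, and that this survives pushing forward by $f$. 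Your alternative proposal via the distributional definition and Lipschitz change of variables is also sound and is perhaps the most self-contained of the three, at the cost of a Jacobian bookkeeping exercise.
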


\begin{proof}
The first line of inequalities holds when the boundary of $E$ is smooth. For general $E$ it follows by approximation by finite perimeter sets with smooth boundary. The second line is well-known.
\end{proof}

\begin{remark}
\label{rem:lipcomp}
 Let $M_ i$, $i=1,2,3$ be metric spaces and $f_i: M_i\to M_{i+1}$, $i=1,2$ be lipschitz maps, then $\Lip(f_2\circ f_1)\le \Lip(f_1)\Lip(f_2)$. Consequently if $g: M_1\to M_2$ is a bilipschitz map, then $1\le \Lip(g)\Lip(g ^{-1})$.
\end{remark}

\begin{remark}
\label{rem:lipschitz}
If $f:C_1\to C_2$ is a bilipschitz map between subsets of $\rr^{n+1}$, then $g:\la C_1\to \la C_2$, defined by $g(x)=\la f(\frac{x}{\la})$,  is also bilipschitz and satisfies $\Lip(f)=\Lip(g)$, $\Lip(f^{-1})=\Lip(g^{-1})$. 
\end{remark}

We define the \emph{isoperimetric profile} of $C$ by
\begin{equation}
\label{eq:profile}
I_C(v)=\inf \Big \{ \pp_C(E) : \Omega \subset C, \vol{E} = v \Big \}.
\end{equation}
We shall say that $E \subset C$ is an \emph{isoperimetric region} if $\pp_C(E)=I_C(\vol{E})$. The \emph{renormalized isoperimetric profile} of $C$ is
\begin{equation}
\label{eq:renprofile}
Y_C=I_C^{(n+1)/n}.
\end{equation}

\begin{lemma}[{\cite[Lemma 5.1]{rv1}}]
\label{lem:link I laC I C}
Let $C$ be a convex body, and $\la> 0$. Then
\begin{equation}
\label{eq:proflac}
I_{\lambda C}(\la^{n+1}v)={\lambda}^nI_{C}(v),
\end{equation}
for all $0< v<\min\{\vol{C},\vol{\la C}\}$.
\end{lemma}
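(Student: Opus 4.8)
The plan is to exhibit the homothety $h_\la\colon C\to\la C$, $h_\la(x)=\la x$, as a bilipschitz bijection and feed its Lipschitz constants into Lemma~\ref{lem:bilip}. First I would record that $h_\la$ is a bijection of $C$ onto $\la C$ with inverse $h_{1/\la}$, that $\Lip(h_\la)=\la$ and $\Lip(h_\la^{-1})=\la^{-1}$ exactly (these are linear maps, so the Lipschitz constants are the operator norms, which are $\la$ and $\la^{-1}$), and that $h_\la$ carries subsets of $C$ of volume $v$ to subsets of $\la C$ of volume $\la^{n+1}v$, by the scaling of $(n+1)$-dimensional Hausdorff measure under a homothety of ratio $\la$ (this is the second line of \eqref{eq:bilip} applied with $f=h_\la$, where both inequalities are equalities since $\Lip(h_\la)^{n+1}=\la^{n+1}=\Lip(h_\la^{-1})^{-(n+1)}$).

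Next I would apply the first line of \eqref{eq:bilip} to $f=h_\la$. Since $\Lip(h_\la)^n=\la^n$ and $\Lip(h_\la^{-1})^{-n}=\la^n$ as well, the two-sided inequality collapses to the identity
\[
\pp_{\la C}(h_\la(E))=\la^n\,\pp_C(E)
\]
for every $E\subset C$ of finite perimeter. Combining this with $|h_\la(E)|=\la^{n+1}|E|$, I get that $h_\la$ is a volume-rescaling, perimeter-rescaling bijection between the admissible classes: the sets $E\subset C$ with $|E|=v$ correspond bijectively, via $E\mapsto h_\la(E)$, to the sets $E'\subset\la C$ with $|E'|=\la^{n+1}v$, and under this correspondence perimeters get multiplied by $\la^n$. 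Taking the infimum of $\pp_{\la C}(E')=\la^n\pp_C(E)$ over this class therefore yields $I_{\la C}(\la^{n+1}v)=\la^n I_C(v)$, which is \eqref{eq:proflac}. The constraint $0<v<\min\{|C|,|\la C|\}$ is exactly what is needed for both admissible classes to be nonempty (so that the infima are genuine), and since $|\la C|=\la^{n+1}|C|$ this is really the single condition $0<v<|C|$ when $C$ is bounded; I would state it as written to match the source.

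I do not expect a genuine obstacle here: the only point requiring a word of care is that \eqref{eq:bilip} for perimeters is stated in Lemma~\ref{lem:bilip} via an approximation argument for general finite-perimeter $E$, so I would simply invoke Lemma~\ref{lem:bilip} rather than reprove the change-of-variables formula for perimeter. One could alternatively verify $\pp_{\la C}(h_\la(E))=\la^n\pp_C(E)$ directly from the distributional definition of relative perimeter by substituting $\xi\mapsto\xi\circ h_{1/\la}$ in the supremum defining $\pp_C(E)$ and tracking the Jacobian factor $\la^{n+1}$ from $d\h^{n+1}$ against the factor $\la^{-1}$ from $\divv(\xi\circ h_{1/\la})=\la^{-1}(\divv\xi)\circ h_{1/\la}$, which nets $\la^n$; but invoking Lemma~\ref{lem:bilip} is cleaner.
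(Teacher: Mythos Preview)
The paper does not supply its own proof of this lemma; it is quoted from \cite[Lemma~5.1]{rv1} and stated without argument. Your proposal is correct and is precisely the standard argument one would expect: the homothety $h_\la$ has $\Lip(h_\la)=\la$ and $\Lip(h_\la^{-1})=\la^{-1}$, so the two-sided bounds in Lemma~\ref{lem:bilip} collapse to equalities, giving exact scaling of perimeter by $\la^n$ and of volume by $\la^{n+1}$, and the bijection between admissible classes then transfers the infimum. There is nothing to compare and no gap.
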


The known results on the regularity of isoperimetric regions are summarized in the following Lemma.

\begin{lemma}[{\cite{MR684753}, \cite{MR862549}, \cite[Thm.~2.1]{MR1674097}}]
\label{lem:n-7}
\mbox{}
Let $C\subset \rr^{n+1}$ a $($possible unbounded$)$  convex body and $E\subset C$ an isoperimetric region.
Then $\ptl E\cap\intt(C) = S_0\cup S$, where  $S_0\cap S=\emptyset$ and
\begin{enum}
\item $S$ is an embedded $C^{\infty}$ hypersurface of constant mean curvature.\item $S_0$ is closed and $H^{s}(S_0)=0$ for any $s>n-7$.
\end{enum}
Moreover, if the boundary of $C$ is of class $C^{2,\alpha}$ then $\cl{\ptl E\cap\intt(C)}=S\cup S_0$, where
\begin{enum}
\item[(iii)] $S$ is an embedded $C^{2,\alpha}$ hypersurface of constant mean curvature
\item[(iv)] $S_0$ is closed and $H^s(S_0)=0$ for any $s>n-7$
\item[(v)] At points of $ S \cap \ptl C$,  $ S$ meets $\ptl C$ orthogonally.
\end{enum}
\end{lemma}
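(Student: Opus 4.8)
The plan is to assemble the statement from the regularity theory cited in the references \cite{MR684753}, \cite{MR862549}, \cite[Thm.~2.1]{MR1674097}; the substance of the argument is checking that an isoperimetric region meets the hypotheses under which that theory applies. First I would record that an isoperimetric region $E\subset C$ is a volume-constrained minimizer of the relative perimeter $\pp_C$: if $|F|=|E|$ and $F\subset C$ then $\pp_C(F)\ge\pp_C(E)$. A standard localization argument — adjusting the volume of a competitor by a small modification inside a fixed ball, as in \cite{MR684753} — shows that $E$ is then a $(\Lambda,r_0)$-almost minimizer of perimeter in $C$: there exist $\Lambda,r_0>0$ with $\pp_C(E)\le \pp_C(F)+\Lambda\,\vol{E\triangle F}$ whenever $E\triangle F\subset B(x,r_0)$. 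Taking the first variation of perimeter against the volume constraint produces a Lagrange multiplier, which gives that the regular part of $\ptl E\cap\intt(C)$ has constant mean curvature.

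For the interior statements (i)--(ii) I would invoke the interior regularity theory for almost-minimizing boundaries (De Giorgi's theorem together with Federer's dimension-reduction argument): $\ptl E\cap\intt(C)$ splits as $S\cup S_0$ with $S$ the set of points near which $\ptl E$ is an embedded $C^{1,\alpha}$ hypersurface and $S_0$ the complementary singular set, which satisfies $H^s(S_0)=0$ for every $s>n-7$; since $S$ is relatively open, $S_0$ is relatively closed. The constant mean curvature equation and elliptic Schauder bootstrapping upgrade the regularity of $S$ from $C^{1,\alpha}$ to $C^\infty$, giving (i), while (ii) is the dimension bound on $S_0$.

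For the boundary statements (iii)--(v), under the hypothesis $\ptl C\in C^{2,\alpha}$ I would appeal to the boundary regularity results of Grüter \cite{MR862549} and Stredulinsky--Ziemer \cite[Thm.~2.1]{MR1674097} for perimeter minimizers constrained to lie on one side of a $C^{2,\alpha}$ hypersurface (here the obstacle is $\ptl C$): near a point of $\ptl C$ the minimizer is, off a singular set of the same dimensional bound $H^s=0$ for $s>n-7$, an embedded $C^{2,\alpha}$ hypersurface up to the free boundary; this gives (iii)--(iv) and, combined with (i)--(ii), the identity $\cl{\ptl E\cap\intt(C)}=S\cup S_0$ with the closure understood up to $\ptl C$. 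Finally, the first variation of perimeter with vector fields that are merely tangent to $\ptl C$ (rather than compactly supported in $\intt C$) shows that $S$ meets $\ptl C$ orthogonally along $S\cap\ptl C$, which is (v). The main obstacle, to the extent there is one in a statement of this compilatory nature, is the boundary part: one must verify that a volume-constrained relative perimeter minimizer in $C$ genuinely fits the obstacle-problem framework of \cite{MR862549}, \cite{MR1674097} (almost-minimality persists up to $\ptl C$, and $C^{2,\alpha}$ regularity of $\ptl C$ is the sharp hypothesis needed), and that the presence of the boundary obstacle does not enlarge the singular set beyond dimension $n-7$.
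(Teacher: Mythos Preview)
The paper does not prove this lemma at all: it is stated as a compilation of known results with citations to \cite{MR684753}, \cite{MR862549}, \cite[Thm.~2.1]{MR1674097}, and no argument is given. Your proposal is a reasonable sketch of how those cited results assemble into the statement, and in that sense goes beyond what the paper itself provides; there is nothing to compare it against.
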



Arguing similarly as in the proof of \cite[Thm.~4.1]{rv1} we obtain
\begin{lemma}
\label{lem:niceapprox}
Let $\{C_i\}_{i\in\nn}$ be a sequence of $($possibly unbounded$)$ convex bodies converging to a convex body $C$ in pointed Hausdorff distance. Let $E\subset C$ a bounded set of finite perimeter and volume $v>0$. If $v_i\to v$. Then there exists a sequence $\{E_i\}_{i\in\nn}$ of bounded sets $E_i\subset C_i$ of finite perimeter in $C_i$ with $\vol{E_i}=v_i$ and $\lim_{i\to \infty} \pp_{C_i}(E_i)=\pp_{C}(E)$. 
\end{lemma}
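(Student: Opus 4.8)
The plan is to reduce the statement to the following two standard facts about sets of finite perimeter: first, that any bounded finite perimeter set $E$ in a convex body $C$ can be approximated in the sense of volume and perimeter by sets with smooth boundary contained in the interior of a slightly larger scaled copy, and second, that a bilipschitz map between nearby convex bodies distorts volume and perimeter in a controlled way, as quantified in Lemma~\ref{lem:bilip}. Since $\{C_i\}$ converges to $C$ in pointed Hausdorff distance, for each large $i$ one can produce a bilipschitz map $\phi_i\colon C\to C_i$ whose bilipschitz constants tend to $1$ as $i\to\infty$; the construction of such maps (by nearest-point projections, or by the radial-graph description of convex bodies, exactly as in \cite[Thm.~4.1]{rv1}) is where the convexity hypothesis is used. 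Setting $F_i=\phi_i(E)$ gives bounded finite perimeter sets in $C_i$ with $\pp_{C_i}(F_i)\to\pp_C(E)$ and $\vol{F_i}\to\vol{E}=v$ by \eqref{eq:bilip}.

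The remaining issue is to adjust the volume from $\vol{F_i}$ to the prescribed value $v_i$, where $v_i\to v$. For this I would fix a small ball $B=\clb(p,r)$ with $p\in\intt(C)$ and $r$ small enough that $B\subset\intt(C)$ and $E$ misses a neighborhood of $B$, or alternatively a ball disjoint from $\overline E$; for large $i$ the translated or slightly shifted ball still lies in $\intt(C_i)$ and is disjoint from $F_i$. One then either adds to $F_i$ a piece of this ball, or removes from $F_i$ a small ball centered at an interior point of $F_i$ (a point of density one lying in $\intt(C_i)$, which exists since $\vol{F_i}>0$), choosing the radius so that the total volume becomes exactly $v_i$. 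Because $|v_i-\vol{F_i}|\to 0$, the radius of the excised or added ball tends to $0$, and the corresponding change in perimeter is $O(|v_i-\vol{F_i}|^{n/(n+1)})\to 0$. Calling the result $E_i$, we obtain $\vol{E_i}=v_i$ and $\pp_{C_i}(E_i)\to\pp_C(E)$.

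The main obstacle is the construction of the bilipschitz maps $\phi_i\colon C\to C_i$ with constants converging to $1$, together with the bookkeeping needed when $C$ (hence $C_i$) is unbounded: one must be careful that $E$ is bounded so that $\phi_i$ need only be controlled on a fixed large ball, and that the pointed Hausdorff convergence indeed yields uniform control of the bilipschitz constants on that ball. Once this is in place, Lemma~\ref{lem:bilip}, Remark~\ref{rem:lipcomp}, and the elementary volume-fixing step give the conclusion; since all of this is carried out in detail in \cite[Thm.~4.1]{rv1} for the analogous statement there, it suffices here to indicate the modifications, which is presumably why the authors write ``arguing similarly as in the proof of \cite[Thm.~4.1]{rv1}''.
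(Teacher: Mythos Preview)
Your proposal is correct and follows essentially the same route as the paper: intersect with a large closed ball $B\supset E$ so that $C_i\cap B\to C\cap B$ in Hausdorff distance, invoke the bilipschitz maps with constants tending to $1$ from \cite{rv1} (the paper cites \cite[Thm.~3.4]{rv1} for the maps and \cite[Thm.~4.1]{rv1} for the overall scheme), transport $E$, and then correct the volume.

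The one point of divergence is the volume-fixing step. The paper adjusts the volume \emph{before} transporting: it takes a smooth perturbation of $E$ supported on the regular part of $\partial_C E$ and then pulls it back by $f_i^{-1}$, choosing the perturbation so that the preimage has volume exactly $v_i$. You instead transport first and then add or excise a small Euclidean ball inside $C_i$. Both are valid; your version is more elementary and does not appeal to the regularity of $\partial_C E$, while the paper's version keeps $E_i$ a single connected deformation of $E$, which is occasionally convenient downstream. Either way the perimeter correction is $o(1)$ and the conclusion follows.
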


\begin{proof}
Let $B\subset \rr^{n+1}$ be a closed Euclidean ball containing $E$ in its interior.  By hypothesis, the sequence $\{C_i\cap B\}_{i\in\nn}$ converges in Hausdorff distance to $C\cap B$. As in \cite[Thm.~3.4]{rv1}, we consider a sequence $f_i:C_i\cap B\to C\cap B$ of bilipschitz maps with $\Lip(f_i)$, $\Lip(f_i^{-1})\to 1$. Now we argue as in \cite[Thm.~4.1]{rv1}, defining the sets $E_i\subset C_i$ as the preimages by $f_i$ of smooth perturbations of $E$ supported in the regular part of $\ptl_C E$, and such that $\vol{E_i}=v_i$, and $\lim_{i\to \infty} \pp_{C_i}(E_i)= \pp_{C}(E)$.
\end{proof}

\begin{proposition}[{\cite[Proposition 6.2]{rv1}}]
\label{prp:ICleICmin}
Let $C\subset\rr^{n+1}$ be a convex body $($possibly unboun\-ded$)$, and $p\in \ptl C$. Then every intrinsic ball in $C$ centered at $p$ has no more perimeter than an intrinsic ball of the same volume in $C_p$. Consequently
\begin{equation}
\label{eq:ICleICmin}
I_C(v)\le I_{C_{p}}(v),
\end{equation}
for all $0< v<\vol{C}$.
\end{proposition}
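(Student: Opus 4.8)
The plan is to compare an intrinsic ball $B_C(p,r)$ in $C$ with the intrinsic ball in the tangent cone $C_p$ of the same volume, using the fact that $C_p$ is, up to closure, the increasing union of the homothetic images $h_{p,\la}(C)$, $\la>0$. Concretely, fix $p\in\ptl C$ and $r>0$, and consider $B=B_C(p,r)=C\cap B(p,r)$. Since $C\subset C_p$, we have $B\subset C_p\cap B(p,r)=B_{C_p}(p,r)$, but the volumes need not match, so one must first understand how volume and perimeter of the intrinsic balls in $C_p$ depend on the radius. Because $C_p$ is a cone with vertex $p$, dilations centered at $p$ show that $\vol{B_{C_p}(p,s)}$ is a continuous strictly increasing function of $s$ ranging over $(0,\infty)$, so there is a unique $s=s(r)\ge r$ with $\vol{B_{C_p}(p,s)}=\vol{B_C(p,r)}$; set $v=\vol{B_C(p,r)}$.

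The key step is to show $\pp_C(B_C(p,r))\le \pp_{C_p}(B_{C_p}(p,s))$. For this I would exploit that $h_{p,\la}(C)$ increases to (a dense subset of) $C_p$ as $\la\to\infty$, hence $h_{p,\la}(C)\to C_p$ in pointed Hausdorff distance; by Remark~\ref{rem:lipschitz} the rescaled sets $\la C$ (after translating $p$ to $0$) are bilipschitz-equivalent to $C$ with the same Lipschitz constants, and Lemma~\ref{lem:bilip} together with the scaling relation of Lemma~\ref{lem:link I laC I C} controls how perimeter transforms. The cleanest route: apply Lemma~\ref{lem:niceapprox} with $C_i=h_{p,\la_i}(C)$ and target $C_p$, approximating $B_{C_p}(p,s)$ by sets $E_i\subset h_{p,\la_i}(C)$ of the same volume with $\pp_{C_i}(E_i)\to\pp_{C_p}(B_{C_p}(p,s))$; pulling $E_i$ back by the homothety gives a set in $C$ of volume $\la_i^{-(n+1)}\vol{B_{C_p}(p,s)}$ and perimeter $\la_i^{-n}\pp_{C_i}(E_i)$. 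Comparing this with the intrinsic ball of the same small volume in $C$, and using that intrinsic balls centered at $p$ in $C$ have controlled perimeter growth, one transfers the inequality back. Alternatively, and perhaps more directly, one can observe that for small volumes the comparison is an identity in the limit and use the monotonicity of $I_C/I_{C_p}$ under the dilation scaling; the honest comparison of the two intrinsic balls of equal volume is what \cite[Proposition 6.2]{rv1} establishes, so I would follow that argument.

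Granting the ball comparison, the profile inequality \eqref{eq:ICleICmin} follows: given $0<v<\vol{C}$, the intrinsic ball $B_{C_p}(p,s)$ of volume $v$ in $C_p$ is an isoperimetric region in the cone $C_p$ (balls centered at the vertex minimize in convex cones, \cite{lions-pacella}, \cite{r-r}), so $\pp_{C_p}(B_{C_p}(p,s))=I_{C_p}(v)$. On the other hand there is a radius $r$ with $\vol{B_C(p,r)}=v$ (again by continuity and monotonicity of $t\mapsto\vol{B_C(p,t)}$, which runs over $(0,\vol{C})$), and then
\[
I_C(v)\le \pp_C(B_C(p,r))\le \pp_{C_p}(B_{C_p}(p,s))=I_{C_p}(v).
\]

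The main obstacle is the ball comparison itself, i.e.\ verifying $\pp_C(B_C(p,r))\le \pp_{C_p}(B_{C_p}(p,s))$ where the two balls have the same volume but generally different radii $r\le s$. The subtlety is that passing from $C$ to $C_p$ enlarges the body, which decreases relative perimeter for a fixed set but also forces a larger radius to keep the volume fixed, and these two effects must be balanced; this is exactly where the cone structure of $C_p$ (exact scaling) versus the merely asymptotic cone structure of $C$ near $p$ enters, and it is handled by the dilation/approximation argument sketched above rather than by a direct geometric estimate. Everything else—continuity and monotonicity of the volume-of-intrinsic-ball functions, and the fact that vertex balls are isoperimetric in cones—is standard and quotable.
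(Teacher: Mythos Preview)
The paper does not supply a proof of this proposition; it is quoted verbatim from \cite[Proposition~6.2]{rv1}. So there is no ``paper's own proof'' to compare against, and your task is really to recover the argument from that reference. Your sketch, however, does not do this: at the crucial step you concede the point (``the honest comparison \ldots\ is what \cite[Proposition~6.2]{rv1} establishes, so I would follow that argument''). The approximation route you propose via Lemma~\ref{lem:niceapprox} does not close. Approximating $B_{C_p}(p,s)$ by sets $E_i\subset h_{p,\la_i}(C)$ and scaling back produces competitors in $C$ of volume $\la_i^{-(n+1)}v\to 0$, which at best yields $\limsup_{\la\to\infty} I_{\la C}(v)\le I_{C_p}(v)$; turning this into $I_C(v)\le I_{C_p}(v)$ would require $I_{\la C}\ge I_C$ for $\la\ge 1$, which in this paper is Lemma~\ref{lem:_IC<la C} and is proved \emph{after} (and using) Proposition~\ref{prp:ICleICmin}. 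Also, your inequality on the radii is backwards: since $C\subset C_p$ one has $|B_C(p,r)|\le |B_{C_p}(p,r)|$, hence the matching radius in $C_p$ satisfies $s\le r$, not $s\ge r$.

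The missing idea is a direct monotonicity argument that does not pass through the profile at all. For $p\in C$ convex, set $\alpha(r)=|B_C(p,r)|/|B(p,r)|$. Convexity gives $h_{p,r/R}(C)\subset C$ for $r<R$, hence $\alpha$ is nonincreasing with $\sup_r\alpha(r)=\lim_{r\to 0}\alpha(r)=\alpha(C_p)$, the solid-angle fraction of the tangent cone. By coarea, $P_C(B_C(p,r))\le H^n(\ptl B(p,r)\cap C)=\tfrac{d}{dr}|B_C(p,r)|=\alpha'(r)\,\omega_{n+1}r^{n+1}+(n+1)\alpha(r)\,\omega_{n+1}r^{n}\le (n+1)\alpha(r)\,\omega_{n+1}r^{n}$. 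Dividing by $|B_C(p,r)|^{n/(n+1)}=(\alpha(r)\omega_{n+1})^{n/(n+1)}r^{n}$ gives
\[
\frac{P_C(B_C(p,r))}{|B_C(p,r)|^{n/(n+1)}}\le (n+1)\big(\alpha(r)\,\omega_{n+1}\big)^{1/(n+1)}\le (n+1)\big(\alpha(C_p)\,\omega_{n+1}\big)^{1/(n+1)}=I_{C_p}(1),
\]
which is exactly the ball comparison (and hence \eqref{eq:ICleICmin}) via \eqref{eq:isopkon}. This is the content of \cite[Proposition~6.2]{rv1}; your scaling/approximation outline is not a substitute for it.
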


\begin{remark}
\label{rem:half-plane}
A closed half-space $H\subset\rr^{n+1}$ is a convex cone with the largest possible solid angle. Hence, for any convex body $C\subset\rr^{n+1}$, we have
\begin{equation}
\label{eq:half-plane}
I_C(v)\le I_H(v),
\end{equation}
for all $0< v<\vol{C}$.
\end{remark}

\begin{remark}
Proposition~\ref{prp:ICleICmin} implies that $E\cap\ptl C\neq\emptyset$ when $E\subset C$ is isoperimetric. Since  in case $E\cap\ptl C$ is empty, then $E$ is an Euclidean ball. Moreover, as the isoperimetric profile of Euclidean space is strictly larger than that of the half-space, a set whose perimeter is close to the the value of the isoperimetric profile of $C$ must touch the boundary of $C$.
\end{remark}

\begin{proposition}[{\cite[Thm.~2.1]{r-r}}]
\label{prp:spliting}
Let $C$ be an unbounded convex body and $v>0$. Then there exists a finite perimeter set $E\subset C$ $($possibly empty$)$, with $\vol{E}=v_1\le v$, $\pp_C(E)=I_C(v_1)$, and a diverging sequence $\{E_i\}_{i\in\nn}$ of finite perimeter sets such that $\vol{E_i}\to v_2$ and $v_1+v_2=v$. Moreover
\begin{equation}
\label{eq:concavprofconi1}
I_C(v)=\pp_C(E)+\lim_{i\to \infty}\pp_C(E_i)
\end{equation}
\end{proposition}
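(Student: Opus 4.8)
The plan is a concentration--compactness argument applied to a perimeter--minimizing sequence, extracting a ``fixed part'' $E$ near the origin and letting the lost volume escape to infinity along truncations of the sequence. Start with $\{F_j\}_{j\in\nn}\subset C$, $\vol{F_j}=v$, $\pp_C(F_j)\to I_C(v)$. Since $\pp_C(F_j)$ is bounded, so are the localized perimeters on each ball $B(0,R)$, and by the compactness theorem for finite perimeter sets together with a diagonal argument over $R\to\infty$ we may pass to a subsequence with $F_j\to E$ in $L^1_{\mathrm{loc}}(C)$ for some finite perimeter set $E\subset C$. Lower semicontinuity gives $\pp_C(E;B(0,R))\le\liminf_j\pp_C(F_j;B(0,R))$ for every $R$, and since $\vol{E\cap B(0,R)}=\lim_j\vol{F_j\cap B(0,R)}\le v$ for a.e. $R$ we get $\vol{E}=v_1\le v$. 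Put $v_2=v-v_1$.

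Next I would localize along good spheres. For each $j$ set $m_j(r)=\vol{F_j\cap B(0,r)}$, a nondecreasing function with $m_j(+\infty)=v$ and $m_j'(r)=\h^n(F_j\cap\ptl B(0,r))$ for a.e.\ $r$. Because $\int_0^\infty m_j'(r)\,dr=v<\infty$ and $m_j(r)\to\vol{E\cap B(0,r)}$ for a.e.\ $r$, a standard measure--theoretic selection produces radii $R_j\to\infty$ with $\h^n(F_j\cap\ptl B(0,R_j))\to 0$, $m_j(R_j)\to v_1$, and $\pp_C(F_j;\ptl B(0,R_j))=0$. Let $A_j=F_j\cap B(0,R_j)$ and $E_j=F_j\setminus\clb(0,R_j)$. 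Then $A_j\to E$ in $L^1_{\mathrm{loc}}$, $\vol{E_j}=v-m_j(R_j)\to v_2$, and $\{E_j\}$ diverges because $E_j\subset C\setminus B(0,R_j)$. For the chosen $R_j$ one has
\[
\pp_C(F_j)=\pp_C(F_j;B(0,R_j))+\pp_C(F_j;C\setminus\clb(0,R_j)),
\]
while $\pp_C(A_j)\le\pp_C(F_j;B(0,R_j))+\h^n(F_j\cap\ptl B(0,R_j))$ and the analogous bound holds for $\pp_C(E_j)$. Passing to a further subsequence so that all limits exist and using $\pp_C(E)\le\liminf_j\pp_C(A_j)$ together with $\h^n(F_j\cap\ptl B(0,R_j))\to 0$, we obtain $\pp_C(E)+\lim_j\pp_C(E_j)\le I_C(v)$.

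It remains to prove the reverse inequality and the optimality $\pp_C(E)=I_C(v_1)$, and I would do both by a gluing argument. Since $\{E_j\}$ diverges and $\vol{E\setminus B(0,\rho)}\to 0$ as $\rho\to\infty$, any fixed finite--perimeter competitor $E'\subset C$ is essentially disjoint from, and far from, $E_j$ for large $j$, so $\pp_C(E'\cup E_j)\le\pp_C(E')+\pp_C(E_j)+o(1)$ and $\vol{E'\cup E_j}\to\vol{E'}+v_2$; adjusting the volume by the vanishing defect (a controlled small perturbation of $E_j$, or invoking continuity of $I_C$) turns $E'\cup E_j$ into an admissible competitor for $v$. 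Taking $E'=E$ yields $I_C(v)\le\pp_C(E)+\lim_j\pp_C(E_j)$, hence equality; and if $\pp_C(E)>I_C(v_1)$ we could instead take $E'$ with $\vol{E'}=v_1$ and $\pp_C(E')<\pp_C(E)$, producing a competitor for $v$ of perimeter strictly below $I_C(v)$, a contradiction. Therefore $\pp_C(E)=I_C(v_1)$ and \eqref{eq:concavprofconi1} follows. The main obstacle is precisely this last step: one must make the volumes match \emph{exactly} while controlling the perimeter, which needs either a small--perturbation lemma for the volume constraint or continuity of $I_C$, and one must verify that the interaction between the fixed part and the diverging part genuinely disappears --- this is where negligibility of the mass of $E$ at infinity and the support condition $E_j\subset C\setminus B(0,R_j)$ with $R_j\to\infty$ are used. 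The degenerate cases $v_1=0$ (all mass escapes, $E=\emptyset$) and $v_1=v$ (full compactness, $\vol{E_j}\to 0$) are covered by the same scheme.
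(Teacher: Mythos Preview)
The paper does not give its own proof of this proposition; it merely cites \cite[Thm.~2.1]{r-r}. Your proposal follows precisely the concentration--compactness scheme used there (local $L^1$ compactness of a minimizing sequence, a coarea/slicing argument to find good truncation radii, splitting into a fixed part and a diverging part, and a gluing competitor for the reverse inequality and for the optimality of $E$), so there is no methodological discrepancy to discuss.

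Two small caveats are worth flagging. First, in the gluing step you should not invoke continuity of $I_C$: in this paper that is established only later (Lemma~\ref{lem:contprofconi}) and only under the extra hypothesis of a non-degenerate asymptotic cone, whereas the present proposition is stated for arbitrary unbounded convex bodies. The alternative you mention --- correcting the small volume defect by adding or removing a tiny ball far from everything --- is the right way and avoids any circularity. Second, when you take a competitor $E'$ with $\vol{E'}=v_1$ and $\pp_C(E')<\pp_C(E)$, make sure to choose $E'$ bounded (as allowed by Lemma~\ref{lem:I_C=inf I_B r}, or by a direct truncation) so that $E'$ and $E_j$ are genuinely disjoint for large $j$; otherwise the perimeter additivity $\pp_C(E'\cup E_j)\le\pp_C(E')+\pp_C(E_j)+o(1)$ needs a separate justification. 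With these adjustments the argument is complete.
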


\begin{lemma}
\label{lem:doubling}
Let $C\subset \rr^{n+1}$ be an unbounded convex body. Then $C$ is a doubling metric space with a constant depending only on $n$.
\end{lemma}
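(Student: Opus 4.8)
Recall that, $C$ being convex, the straight segment between any two of its points stays in $C$, so the intrinsic distance on $C$ is just the restriction of the Euclidean distance and $B_C(x,r)=C\cap B(x,r)$. Thus it suffices to exhibit a bound $N=N(n)$, independent of $C$, such that every intrinsic ball $B_C(x,2r)$ can be covered by $N$ intrinsic balls of radius $r$; and the route to this is the observation that $\h^{n+1}$ restricted to $C$ is a doubling measure with constant depending only on $n$.

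For the latter, fix $x\in C$ and $r>0$ and let $h_{x,1/2}$ be the homothety of center $x$ and ratio $1/2$. Since $C$ is convex and $x\in C$, we have $h_{x,1/2}(C)\subset C$, and clearly $h_{x,1/2}(B(x,2r))\subset B(x,r)$; hence $h_{x,1/2}(B_C(x,2r))\subset B_C(x,r)$. As $h_{x,1/2}$ scales $(n{+}1)$-dimensional volume by the factor $2^{-(n+1)}$, we obtain
\[
\vol{B_C(x,2r)}\le 2^{n+1}\,\vol{B_C(x,r)},
\]
with the constant $2^{n+1}$ manifestly independent of $C$, $x$ and $r$. Moreover $\vol{B_C(x,r)}>0$ for every $x\in C$: since $C$ has interior points and is convex, the convex hull of $x$ and a small ball contained in $\intt C$ lies in $C$ and meets $B(x,r)$ in a set of positive measure.

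From here the metric doubling property follows by the standard packing argument, all constants remaining functions of $n$ only. Let $\{x_i\}_{i=1}^k\subset B_C(x,r)$ be a maximal family with $\dist(x_i,x_j)>r/2$ for $i\neq j$. Maximality forces the balls $B_C(x_i,r/2)$ to cover $B_C(x,r)$, while the balls $B_C(x_i,r/4)$ are pairwise disjoint and all contained in $B_C(x,2r)$. Applying the volume doubling estimate three times gives $\vol{B_C(x_i,r/4)}\ge 2^{-3(n+1)}\vol{B_C(x_i,2r)}\ge 2^{-3(n+1)}\vol{B_C(x,r)}$, using $B_C(x,r)\subset B_C(x_i,2r)$; summing over $i$ by disjointness and using one further doubling step bounds $k$ by $2^{4(n+1)}$. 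Hence $B_C(x,2r)$ is covered by at most $2^{4(n+1)}$ balls of radius $r/2$, let alone of radius $r$, which is the doubling property with constant depending only on $n$.

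I do not anticipate a genuine obstacle here: the only points demanding care are that every constant produced is independent of the particular convex body $C$ — which is guaranteed by the translation and dilation invariance underlying the homothety trick — and that intrinsic balls centered at boundary points of $C$ still carry positive volume, handled by the convex-hull remark above.
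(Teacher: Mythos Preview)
Your argument is correct, but the route differs from the paper's. The paper proves the volume doubling inequality via a cone comparison: it lets $K$ be the cone with vertex $x$ subtended by $\partial B_C(x,r)$, uses $B_C(x,2r)\setminus B_C(x,r)\subset B_K(x,2r)\setminus B_K(x,r)$ together with $|B_K(x,r)|=|B_C(x,r)|$, and obtains the constant $2^{n+1}+1$. You instead use the homothety $h_{x,1/2}$, which sends $C$ into $C$ by convexity and $B(x,2r)$ into $B(x,r)$, yielding the cleaner bound $2^{n+1}$ in one line. Your approach is more elementary and gives a slightly sharper constant; the paper's cone argument, on the other hand, is closer in spirit to the other cone comparisons used elsewhere in the paper. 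You also go further than the paper does, converting volume doubling into the covering formulation of metric doubling via the standard packing argument (the paper's proof stops at the volume inequality, which is all that is used downstream). One cosmetic slip: in your last sentence you wrote that $B_C(x,2r)$ is covered by balls of radius $r/2$; what your packing argument actually shows is that $B_C(x,r)$ is covered by at most $2^{4(n+1)}$ balls of radius $r/2$, which is of course the same statement after rescaling.
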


\begin{proof}
Let $x\in C$, $r>0$ and $K$ denote the convex cone with vertex $x$ subtended by $\ptl B_C(x,r)$ then
\begin{equation}
\label{eq:doubling}
\begin{split}
\vol{B_C(x,2r)}& = \vol{B_C(x,2r)\setminus B_C(x,r)}+\vol{B_C(x,r)}
\\
&\le  \vol{B_K(x,2r)\setminus B_K(x,r)}+\vol{B_C(x,r)}
\\
&\le  \vol{B_K(x,2r)}+\vol{B_C(x,r)}
\\
&= 2^{n+1} \vol{B_K(x,r)}+\vol{B_C(x,r)}
\\
&= (2^{n+1}+1) \vol{B_C(x,r)}.
\end{split}
\end{equation}
\end{proof}

The next result follows from \cite[6.1]{gr}.
\begin{proposition}
\label{prp:isopbound}
Let $C\subset \rr^{n+1}$ be an unbounded convex body. Then each isoperimetric region in $C$ is bounded.
\end{proposition}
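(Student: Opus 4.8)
The plan is to run the standard iteration on the tail volume of $E$, the structural inputs being the regularity of isoperimetric regions (Lemma~\ref{lem:n-7}) and, crucially, a local isoperimetric inequality $I_C(\delta)\ge c\,\delta^{n/(n+1)}$ valid for small $\delta$ — this last fact is what is supplied by \cite[6.1]{gr} and ultimately rests on the doubling property of $C$ (Lemma~\ref{lem:doubling}).

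First I would set up the tail function. Let $E\subset C$ be isoperimetric with $\vol E=v$; by definition $v\in(0,\infty)$. Since $\intt(C)$ is connected and $|C|=\infty$, the alternative $\ppc(E)=0$ would force $\vol E\in\{0,\infty\}$, so in fact $\ppc(E)=I_C(v)>0$ and $\ptl_C E$ has positive $\h^n$-measure; by Lemma~\ref{lem:n-7} it then contains a regular point $p_0\in\intt(C)$, near which $E$ is a smooth domain. Fix $x_0=p_0$ and put $V(r)=\vol{E\setminus\clb(x_0,r)}$. The coarea formula applied to $x\mapsto|x-x_0|$ shows that $V$ is absolutely continuous, non-increasing, with $V(r)\to 0$ as $r\to\infty$, and $-V'(r)=\h^n\big(E\cap\ptl B(x_0,r)\big)$ for a.e.\ $r$; moreover, up to a null set, $E$ is bounded if and only if $V(r_*)=0$ for some $r_*$. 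Assume, for a contradiction, that $V(r)>0$ for all $r$.

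Next I would write down two competitors, valid for a.e.\ large $r$. \emph{(a)} Excise $E$ outside $\clb(x_0,r)$ and restore the lost volume $V(r)$ by a first-order outward deformation of $\ptl_C E$ near $p_0$: since this deformation is supported in a small ball about $p_0$ contained in $B(x_0,r)$ once $r$ is large, it is disjoint from the excision, and restoring volume $V(r)$ near a regular point of a constant-mean-curvature hypersurface costs at most $\Lambda\,V(r)$ in perimeter, where $\Lambda$ depends only on the local geometry of $\ptl_C E$ at $p_0$. Combining this with the standard decompositions $\ppc(E\cap B(x_0,r))=\ppc(E;B(x_0,r))+\h^n(E\cap\ptl B(x_0,r))$ and $\ppc(E)=\ppc(E;B(x_0,r))+\ppc(E;C\setminus\clb(x_0,r))$, the minimality of $E$ gives, after cancellation,
\[
\ppc\big(E;C\setminus\clb(x_0,r)\big)\ \le\ -V'(r)+\Lambda\,V(r).
\]
\emph{(b)} The set $E\setminus\clb(x_0,r)$ has volume $V(r)\in(0,|C|)$, so by definition of $I_C$,
\[
I_C\big(V(r)\big)\ \le\ \ppc\big(E\setminus\clb(x_0,r)\big)\ \le\ \ppc\big(E;C\setminus\clb(x_0,r)\big)-V'(r).
\]
Inserting \emph{(a)} into \emph{(b)} yields $I_C(V(r))\le -2V'(r)+\Lambda\,V(r)$ for a.e.\ large $r$.

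Finally I would invoke the local isoperimetric inequality $I_C(\delta)\ge c\,\delta^{n/(n+1)}$. Since $n/(n+1)<1$, we have $\Lambda\,V(r)\le\tfrac12 I_C(V(r))$ once $r$ is large, so
\[
-V'(r)\ \ge\ \tfrac14\,I_C\big(V(r)\big)\ \ge\ \tfrac c4\,V(r)^{n/(n+1)}\qquad\text{for a.e.\ large }r,
\]
and hence $\tfrac{d}{dr}\big(V(r)^{1/(n+1)}\big)\le -\tfrac{c}{4(n+1)}$ on the range where $V$ is positive. Integrating drives $V(r)^{1/(n+1)}$ to $0$ in finite distance, contradicting $V(r)>0$ for all $r$; therefore $E$ is bounded. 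I expect the only real difficulty to be the local isoperimetric inequality itself — equivalently, the fact that an unbounded convex body does not collapse at infinity — which is precisely the ingredient taken from \cite[6.1]{gr} via Lemma~\ref{lem:doubling}; the deformation estimate in \emph{(a)} and the ODE step are routine, and Proposition~\ref{prp:spliting} is not needed here.
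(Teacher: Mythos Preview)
Your proposal is correct and follows the same strategy as the paper. The paper's proof is essentially a sketch that names exactly your two ingredients---the small-volume isoperimetric inequality $\pp_C(\Om)\ge c_0\,\vol{\Om}^{n/(n+1)}$ obtained from the doubling property and \cite[Lemma~3.10]{gr}, and the volume-restoring deformation near a regular point (your competitor~(a), their estimate~\eqref{eq:almgren} with constant $2|H|$ playing the role of your $\Lambda$)---and then defers the combination of the two to \cite[Lemma~4.6]{gr}, whereas you write out the tail-volume ODE explicitly.
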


\begin{proof}
 Using the doubling property, Lemma \ref{lem:doubling}, and \eqref{eq:isnqgdbl1} as in \cite[Lemma 3.10]{gr}, we get  an $c_0>0$ such that
\begin{equation}
\label{eq:exicyl1}
\pp_C(\Om)\ge c_0 \vol{\Om}^{n/(n+1)}
\end{equation}
for any finite perimeter set with $\vol{\Om}\le v_0$.

Let $E\subset C$ be an isoperimetric region so that the regular part of the boundary has constant mean curvature $H$. Consider a point $p$ in the regular part of $\ptl E\cap\intt(C)$, and take a vector field in $\rr^{n+1}$ with compact support in a small neighborhood of $p$ that does not intersect the singular set of $\ptl E$. We choose the vector field so that the deformation $\{E_t\}_{t\in\rr}$ induced by the associated flow strictly increases the volume in the interval $(-\eps,\eps)$, i.e., $t\mapsto\vol{E_t}$ is strictly increasing in $(-\eps,\eps)$. Taking a smaller $\eps$ if necessary, the first variation formulas of volume and perimeter imply 
\begin{equation}
\label{eq:almgren}
\big|\hh^n(\ptl E_t\cap\intt(C))-\hh^n(\ptl E\cap\intt(C)\big|\le (2|H|)\,\big|\vol{E_t}-\vol{E}\big|.
\end{equation}
The last equation plays the role of deformation Lemma in  \cite[Lemma 4.6]{gr},  which combined with \eqref{eq:exicyl1} give us the boundedness of isoperimetric regions.
\end{proof}

We shall say that a cone is \emph{regular} if its boundary is $C^2$ out of the vertices.

\begin{proposition}
\label{prp: asympt ineq cone}
Let $C$ be a regular convex cone  and $\{E_i\}_{i\in\nn}\subset C$ a diverging sequence of finite perimeter sets with $\lim_{i\to\infty} \vol {E_i}=v$. Then $\liminf_{i\to\infty} P_C(E_i)\ge I_{H}(v)$.
\end{proposition}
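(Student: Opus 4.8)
The plan is to show that a diverging sequence of finite perimeter sets in a regular convex cone $C$ with volumes converging to $v$ cannot have perimeter asymptotically below $I_H(v)$, the half-space isoperimetric value. The intuition is that as the sets diverge to infinity along the cone, they "see" less and less of the vertex; near infinity the boundary $\ptl C$ (being $C^2$ away from the vertex) looks flatter and flatter, so locally the ambient space is indistinguishable from a half-space (or all of $\rr^{n+1}$), and the half-space isoperimetric inequality is the weakest among tangent cones.

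First I would set up a blow-down/rescaling argument. Since $\{E_i\}$ diverges, pick points $p_i\in E_i$ (or in $\spt$ of the associated measures) with $|p_i|\to\infty$. Consider the rescaled bodies $C_i := \la_i(C - p_i')$ for suitable translations and scales $\la_i$ chosen so that the rescaled sets $E_i' $ have volume bounded away from $0$ and $\infty$; because $C$ is a cone, translating the vertex away and rescaling keeps the ambient sets convex, and one can arrange (using that $\ptl C$ is $C^2$ off the vertex, together with the convexity and the fact that $|p_i|\to\infty$) that $C_i$ converges in pointed Hausdorff distance either to a half-space $H$ or to all of $\rr^{n+1}$ — in both cases a convex body whose isoperimetric profile is at least $I_H$ by Remark~\ref{rem:half-plane}. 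Here I would keep volumes fixed by a diagonal/normalization: using Lemma~\ref{lem:link I laC I C} and Lemma~\ref{lem:bilip}, rescaling changes perimeter and volume by controlled powers of $\la_i$, so the inequality $\liminf P_C(E_i)\ge I_H(v)$ is scale-consistent and it suffices to prove the limiting statement after normalizing $\vol{E_i'}=v$ for all $i$.

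Next, I would pass to the limit using lower semicontinuity of perimeter under the convergence of ambient bodies. Concretely, by Lemma~\ref{lem:niceapprox} (or its underlying compactness), after extracting a subsequence the normalized sets $E_i'$ converge in $L^1_{loc}$ to a finite perimeter set $E_\infty$ in the limit body $C_\infty^{\text{(blow-down)}}$ (a half-space or $\rr^{n+1}$), with $\vol{E_\infty}\le v$ and $\liminf P_{C_i}(E_i')\ge P_{C_\infty}(E_\infty)$; a concentration-compactness/splitting argument as in Proposition~\ref{prp:spliting} handles the possibility that volume escapes to infinity, in which case the "lost" volume still contributes at least its half-space isoperimetric cost (again by Remark~\ref{rem:half-plane}), and subadditivity of $v\mapsto I_H(v)^{(n+1)/n}$-type concavity — or more simply the homogeneity $I_H(v)=I_H(1)v^{n/(n+1)}$ and superadditivity of $v^{n/(n+1)}$... wait, $v^{n/(n+1)}$ is subadditive, so I must be careful: the correct bookkeeping is that a ball realizes $I_H$ and splitting can only help, i.e. $I_H(v_1)+I_H(v_2)\ge I_H(v_1+v_2)$ fails, so instead I would argue that the diverging pieces can be recombined into a single competitor in $H$ of volume $v$, whose perimeter is $\ge I_H(v)$, giving $\liminf P_C(E_i)\ge I_H(v)$ directly. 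Either way, the final inequality follows from $I_{C_\infty^{\text{blow-down}}}\ge I_H$.

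The main obstacle I expect is controlling the geometry of the rescaled cones $C_i$ near the diverging basepoints: one must ensure that the $C^2$-regularity of $\ptl C$ away from the vertex, together with convexity, actually forces the pointed-Hausdorff limit to be a half-space (or $\rr^{n+1}$) rather than some other convex cone with smaller isoperimetric profile — this is where the hypothesis that $C$ is a \emph{regular} cone is essential, since at a singular boundary point the tangent cone could be a proper convex cone with $I<I_H$. A secondary technical point is the concentration-compactness bookkeeping to rule out or absorb volume escaping to infinity under the rescaling; this is routine given Proposition~\ref{prp:spliting} and the doubling property (Lemma~\ref{lem:doubling}), but needs to be stated carefully.
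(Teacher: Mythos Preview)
Your approach is essentially the one the paper invokes: it simply cites \cite[Thm.~3.4]{r-r}, where precisely this translate-and-compare argument is carried out for diverging sequences in a smooth cone. The geometric mechanism you identify---that for a regular cone $C$ and basepoints $p_i$ with $|p_i|\to\infty$, the translated bodies $C-p_i$ converge in pointed Hausdorff distance to a half-space (or to $\rr^{n+1}$ if $\dist(p_i,\ptl C)\to\infty$)---is the right one, and your remark that regularity of $\ptl C$ away from the vertex is what forces the limit to be $H$ rather than a narrower cone is exactly the point of the hypothesis. One simplification: rescaling is unnecessary. Since $C$ is a cone, $x+p_i\in C$ is equivalent to $x/|p_i|+p_i/|p_i|\in C$, so translation alone sends the vertex and the ``far side'' of the cone off to infinity; moreover $\vol{E_i}\to v$ is already given, so there is nothing to normalize.

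Two genuine slips to fix. First, you have the subadditivity conclusion backwards: since $t\mapsto t^{n/(n+1)}$ is concave with value $0$ at $0$, it is subadditive, hence $v_1^{n/(n+1)}+v_2^{n/(n+1)}\ge (v_1+v_2)^{n/(n+1)}$ and therefore $I_H(v_1)+I_H(v_2)\ge I_H(v_1+v_2)$ \emph{does} hold. This is precisely the inequality that makes the concentration-compactness bookkeeping work directly: if the limit splits the volume as $v=v_1+v_2$ with each piece contributing at least its half-space cost, the total is already $\ge I_H(v)$, and your ``recombine into a single competitor'' workaround is not needed. Second, Lemma~\ref{lem:niceapprox} goes in the wrong direction for your purpose: it produces approximants $E_i\subset C_i$ to a given $E\subset C$, whereas you need compactness (extracting a limit $E_\infty$ from the given $E_i$). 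For that you want the standard $BV$-compactness together with lower semicontinuity of perimeter under the ambient convergence, as in the arguments of \cite{r-r}.
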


\begin{proof}
The proof is modeled on \cite[Thm.~3.4]{r-r}, where the sets of the diverging sequence were assumed to have the same volume.
\end{proof}

\section{Unbounded convex bodies with non-degenerate asymptotic cone}
\label{sec:nondegenerate}

The main result in this Section is Theorem~\ref{thm:optimaconi}, where we prove that the isoperimetric profile $I_C$ of an unbounded convex body $C$ with non-degenerate asymptotic cone $C_\infty$ is bounded from below by $I_{C_\infty}$ and that $I_C$ and $I_{C_\infty}$ are asymptotic functions. We also prove the continuity of the isoperimetric profile $I_C$.

Assume now that $C\subset {\rr}^{n+1}$ is an unbounded convex body and $0\in C$. We  denote
\[
C_r =\clb_C(0,r)
\]
 and
\[
 I_{C_r}  (v)=\inf \big\{ P_C(E) : E \subset B_r, \vol{E} = v \big \}.
\]

\begin{lemma}
\label{lem:I_C=inf I_B r}
Let $C$ be an unbounded convex body. Then
\begin{equation}
\label{eq:icinfir}
I_C = \inf_{r>0} I_{C_r}.
\end{equation}
\end{lemma}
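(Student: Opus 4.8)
The inequality $I_C \le \inf_{r>0} I_{C_r}$ is immediate: every competitor $E \subset B_r$ with $|E|=v$ is in particular a subset of $C$ with volume $v$, so $P_C(E) \ge I_C(v)$, and taking the infimum over such $E$ gives $I_{C_r}(v) \ge I_C(v)$ for every $r$; hence $\inf_{r>0} I_{C_r}(v) \ge I_C(v)$. The content is the reverse inequality $I_C(v) \ge \inf_{r>0} I_{C_r}(v)$, equivalently: for every $\eps>0$ there is some $r>0$ and a set $E \subset B_r = \clb_C(0,r)$ with $|E|=v$ and $P_C(E) \le I_C(v) + \eps$.

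First I would fix $\eps>0$ and choose a finite perimeter set $F \subset C$ with $|F|=v$ and $P_C(F) \le I_C(v) + \eps/2$. The set $F$ need not be bounded, so the first task is to replace it by a bounded competitor of the same volume and almost the same perimeter. The standard way to do this is a truncation argument: for $r>0$ set $F_r = F \cap \clb_C(0,r) = F \cap B_r$. By the coarea formula, for a.e. $r>0$ one has $P_C(F_r) \le P_C(F \cap B_r) \le P_C(F; \intt B_r) + \h^n(F \cap \ptl B(0,r) \cap \intt C)$, and since $\int_0^\infty \h^n(F \cap \ptl B(0,r)) \, dr = |F| = v < \infty$, there is a sequence $r_j \to \infty$ along which $\h^n(F \cap \ptl B(0,r_j)) \to 0$. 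Also $P_C(F; \intt B_{r_j}) \to P_C(F)$ as $r_j \to \infty$. Hence for $j$ large, $F_{r_j}$ is a bounded subset of $C$ with $P_C(F_{r_j}) \le P_C(F) + \eps/4$ and $|F_{r_j}| \to v$, so $|F_{r_j}| = v - \de_j$ with $\de_j \to 0^+$.

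Next I would restore the lost volume $\de_j$ without spending more than $\eps/4$ of perimeter, while keeping the set bounded. Here I would use a small ball or a small perturbation near $\ptl C$ far away from $B_{r_j}$: pick a point $q \in \intt C$ with $|q|$ very large, so that a Euclidean ball $B(q,s)$ of volume $\de_j$ is contained in $\intt C$ and disjoint from $F_{r_j}$; then $E_j := F_{r_j} \cup B(q,s)$ has $|E_j| = v$ and $P_C(E_j) = P_C(F_{r_j}) + P(B(q,s))$. The added perimeter is $c_{n+1}\,\de_j^{n/(n+1)}$, which tends to $0$; for $j$ large this is $\le \eps/4$. (If one prefers to avoid changing the profile's monotonicity issues, one could instead absorb $\de_j$ by a volume-adjusting perturbation of $F$ itself before truncating, using the regular part of $\ptl_C F$ as in Lemma~\ref{lem:niceapprox} and the first variation estimate \eqref{eq:almgren}; either route works.) The set $E_j$ is bounded, so $E_j \subset B_R$ for some $R = R(j)$, whence $I_{C_R}(v) \le P_C(E_j) \le I_C(v) + \eps$. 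Taking the infimum over $r$ gives $\inf_{r>0} I_{C_r}(v) \le I_C(v) + \eps$, and letting $\eps \to 0$ finishes the proof.

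The only mildly delicate point is the coarea/truncation step — ensuring that truncating $F$ at a good radius costs arbitrarily little perimeter — but this is entirely routine for finite perimeter sets (it is the same slicing argument used throughout the references \cite{gi}, \cite{MR2976521}), and the volume-restoration step is an elementary attachment of a small ball far away in $\intt C$, which is possible precisely because $C$ is unbounded. I do not expect any genuine obstacle; the statement is essentially the observation that the isoperimetric problem on $C$ can be computed using only bounded competitors, which holds because finite-perimeter finite-volume sets can be exhausted by bounded pieces.
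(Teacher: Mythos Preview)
Your proposal is correct and follows essentially the same approach as the paper: truncate an almost-minimizing set at a good radius via the coarea formula, then restore the lost volume with a small Euclidean ball placed far away in $\intt C$. The only cosmetic difference is that the paper runs the truncation along an entire minimizing sequence $\{E_i\}$ (choosing radii $\rho(i)$ by averaging over intervals of growing length), whereas you fix one almost-minimizer $F$ and select a sequence of good radii from $\liminf_{r\to\infty}\h^n(F\cap\ptl B(0,r))=0$; both arguments are standard and yield the same conclusion.
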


\begin{remark}
Lemma~\ref{lem:I_C=inf I_B r} implies that, for every volume, there exists a minimizing sequence consisting of bounded sets.
\end{remark}

\begin{proof}
From the definition of $I_{C_r}$ it follows that, for $0<r<s$, we have $I_{C_s}\ge I_{C_r}\ge I_C$ in the common domain of definition. Hence $I_C \le \inf_{r>0}I_{C_r}$.

In order to prove the opposite inequality we will be follow an argument in \cite{r-r}. Fix $v>0$, and let ${\{ E_i \}}_{i\in \nn}$ be a minimizing sequence for volume $v$. This means $\vol{E_i}=v$ and $\lim_{i\to \infty} P_C(E_i)=I_C(v)$.

For every $i\in \nn$ we have $\lim_{r\to \infty}\vol{E_i\setminus B_r}=0$. Thus for every $i\in \nn$ there exists $R_i>0$ such that
\[
\vol{E_i\setminus B_{R_i}}<\frac{1}{i}.
\]
We now define a sequence of real numbers ${\{ r_i \}}_{i\in \nn}$ by induction taking $r_1=R_1$ and $r_{i+1}=\max\{r_i,R_{i+1}+1\}+i$. Then $\{r_i\}_{i\in\nn}$ satisfies
\[
r_{i+1}-r_i \ge i\qquad \mbox{and}\quad \vol{E_i\setminus B_{r_i}}<\frac{1}{i}.
\]
By the coarea formula
\[
\int_{r_i}^{r_{i+1}}{\h}^{n}(E_i \cap \ptl B_t)\, dt\le \int_{\rr}{\h}^{n}(E_i \cap \ptl B_t)\,dt= \vol{E_i}=v.
\]
Thus there exists $ \rho(i)\in [r_i,r_{i+1}]$ so that $(r_{i+1}-r_i)\,{\h}^{n}(E_i \cap \ptl B_{\rho(i)})\le v$, and so
\[
{\h}^{n}(E_i \cap \ptl B_{\rho(i)})\le\frac{v}{i}.
\]
Now by Corollary 5.5.3 in \cite{Ziemer} we have
\[
P_C(E_i \cap B_{\rho(i)})\le P(E_i,B_{\rho(i)}) + {\h}^{n}(E_i \cap
\ptl B_{\rho(i)}).
\]
Let $B_i^*$ be a sequence of Euclidean balls of volume $\vol{B_i^*}=\vol{E_i\setminus B_{\rho(i)}}$. Since $\vol{B_i^*}\to 0$ when $i\to\infty$, the balls can be taken at positive distance of $E_i\cap B_{\rho(i)}$, but inside $B_{2r_i}$ for $i$ large enough. Hence
\begin{align*}
I_{C_{2r_i}}(v)&\le P_C(E_i\cap B_{\rho(i)})+P(B_i^*)
\\
&\le  P_C(E_i,B_{\rho(i)}) + {\h}^{n}(E_i \cap \ptl B_{\rho(i)})+P(B_i^*)
\\
&\le P_C(E_i)+\frac{v}{i}+P(B_i^*).
\end{align*}
Taking limits when $i\to\infty$ we obtain $\inf_{r>0} I_{C_r}(v)\le I_C(v)$.
\end{proof}

The following is inspired by \cite[Thm.~4.12]{rv1}

\begin{lemma}
\label{lemma:isnqgdblconi}
Let $C\subset \rr^{n+1}$ a convex body with non-degenerate asymptotic cone $C_\infty$. Given $r_0>0$, there exist positive constants $M$, $\ell_1$, only depending on $r_0$ and $C_{\infty}$, and a universal positive constant $\ell_2$ so that
\begin{equation}
\label{eq:isnqgdbl1}
I_{\clb_C(x,r)}(v)\ge M\, {\min \{v,\vol{\clb_C(x,r)}-v\}}^{n/(n+1)},\end{equation}
for all $x\in C$, $0<r\le r_0$, and $0<v<\vol{\clb(x,r)}$. Moreover
\begin{equation}
\label{eq:isnqgdbl1a}
\ell_1 r^{n+1} \le \vol{\clb_C(x,r)} \le \ell_2 r^{n+1},
\end{equation}
for any $x\in C$, $0<r\le r_0$.
\end{lemma}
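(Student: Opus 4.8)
The plan is to reduce the statement to a uniform isoperimetric inequality on the fixed cone $C_\infty$ by exploiting the non-degeneracy of the asymptotic cone, together with the bilipschitz machinery already available (Lemma~\ref{lem:bilip}, Lemma~\ref{lem:link I laC I C}) and the convergence $\la C\to C_\infty$ in pointed Hausdorff distance. First I would fix $r_0>0$. Since $C$ is an unbounded convex body with non-degenerate asymptotic cone $C_\infty$, and $\la C\to C_\infty$ in pointed Hausdorff distance, there is $R_0>0$ such that $\clb_C(x,r_0)\subset\clb(0,R_0)$ for $x$ in a fixed large ball and, crucially, such that for every $\la$ small enough $\la C\cap\clb(0,R)$ is bilipschitz to $C_\infty\cap\clb(0,R)$ with bilipschitz constants as close to $1$ as we like (this is exactly the construction of the maps $f_i$ used in Lemma~\ref{lem:niceapprox}, coming from \cite[Thm.~3.4]{rv1}). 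The key geometric point is that, after rescaling, intrinsic balls $\clb_C(x,r)$ with $x$ \emph{far out} in $C$ look, up to a controlled bilipschitz change, like intrinsic balls in the cone $C_\infty$ centered at boundary points, whereas for $x$ in a bounded region they are intrinsic balls in a fixed compact convex body; in both cases a scale-invariant isoperimetric inequality of the desired form holds.

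The main steps, in order, would be: (1) Establish \eqref{eq:isnqgdbl1a}. The upper bound $\vol{\clb_C(x,r)}\le\vol{\clb(x,r)}=\ell_2 r^{n+1}$ is immediate with $\ell_2=\omega_{n+1}$ a universal constant. For the lower bound, use that $C$ has non-empty interior and a non-degenerate asymptotic cone: there is a fixed solid angle $\theta_0>0$ (depending only on $C_\infty$) such that every $\clb_C(x,r)$ with $r\le r_0$ contains a cone of solid angle $\theta_0$ and height comparable to $r$; an elementary volume estimate then gives $\vol{\clb_C(x,r)}\ge\ell_1 r^{n+1}$. For points $x$ near a vertex-type region of $C$ this uses the interior cone condition of the convex body directly; for $x$ far out it uses that the slices $C_t$ Hausdorff-approximate $(C_\infty)_t$, which have a uniform inradius lower bound. (2) Prove \eqref{eq:isnqgdbl1}. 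By Lemma~\ref{lem:link I laC I C} (scaling) and Lemma~\ref{lem:bilip} (bilipschitz comparison of perimeter and volume), it suffices to produce a single constant $M_0>0$ such that $I_{\clb_{C_\infty}(y,1)}(v)\ge M_0\min\{v,\vol{\clb_{C_\infty}(y,1)}-v\}^{n/(n+1)}$ uniformly over $y\in C_\infty$, and similarly for a fixed compact model; then transfer this to $\clb_C(x,r)$ via a bilipschitz map with constants bounded in terms of $r_0$ and $C_\infty$, and rescale. For the uniform inequality on $C_\infty$: either cite/adapt a relative isoperimetric inequality on balls in convex cones (the cone satisfies a uniform interior cone condition at every point, being convex with bounded geometry at scale $1$), or run the standard Poincaré-type argument using the doubling property (Lemma~\ref{lem:doubling}) together with a $(1,1)$-Poincaré inequality, exactly as sketched in the proof of Proposition~\ref{prp:isopbound}, but keeping the constants uniform because $C_\infty$ is a \emph{fixed} cone. (3) Assemble: split $C=\{x: \dist(x,\text{base region})\le R_1\}\cup\{x:\dots> R_1\}$, treat the bounded part with a fixed compact convex body model and the unbounded part with the cone model, and take $M$ to be the minimum of the two resulting constants; both depend only on $r_0$ and $C_\infty$.

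The hard part will be step (2), specifically making the isoperimetric constant on $\clb_C(x,r)$ \emph{uniform in $x$}: for $x$ ranging over all of $C$ the local geometry varies, and a naive bilipschitz comparison to a single model has constants that could degenerate. The resolution is that the relevant models form a \emph{precompact} family in the pointed Hausdorff topology --- rescalings of $C$ converge to $C_\infty$, whose local structure at scale $r_0/r$ is controlled --- so by a compactness-and-contradiction argument (if no uniform $M$ existed, extract a sequence $x_i$, rescale, pass to a Hausdorff limit which is either a ball in $C_\infty$ centered at a boundary point or an interior Euclidean ball, and derive a contradiction with the isoperimetric inequality in that limit, using lower semicontinuity of perimeter under the convergence in Lemma~\ref{lem:niceapprox}) one obtains the desired uniform bound. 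This compactness argument is the technical heart; everything else is bookkeeping with the scaling and bilipschitz lemmas already established.
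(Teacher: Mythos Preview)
Your approach is substantially more elaborate than the paper's and, while it could in principle be made to work, it misses a much simpler route. The paper's proof rests on a single observation: for every $x\in C$ one has $x+C_\infty\subset C$, because $x+C_\infty=\bigcap_{\la>0}h_{x,\la}(C)\subset C$. Since $C_\infty$ is non-degenerate, there is a ball $B(y,\delta)\subset\clb_{C_\infty}(0,r_0)$, and hence $B(x+y,\delta)\subset\clb_{x+C_\infty}(x,r_0)\subset\clb_C(x,r_0)$ for \emph{every} $x\in C$. This gives a uniform lower bound on the inradius of $\clb_C(x,r_0)$, depending only on $C_\infty$ and $r_0$, and the paper then invokes \cite[Thm.~4.11]{rv1}, which says that for convex balls a uniform inradius lower bound directly yields a uniform relative isoperimetric constant $M$ depending only on $r_0/\delta$. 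The Ahlfors bounds \eqref{eq:isnqgdbl1a} follow from the same inclusion by an easy scaling.

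By contrast, you propose to split $C$ into a bounded region and a far-out region, build bilipschitz maps to model spaces, transfer isoperimetric inequalities across, and close with a compactness-and-contradiction argument to force uniformity in $x$. You do essentially recognize the cone inclusion in your step~(1) for the volume bound (``contains a cone of solid angle $\theta_0$''), but you abandon it in step~(2), where it would have done all the work. The compactness route is not wrong in spirit, but it carries real technical cost: you would need uniform bilipschitz constants for balls centered at arbitrary $x$, and the contradiction argument needs lower semicontinuity of perimeter under a convergence that is not quite the one Lemma~\ref{lem:niceapprox} provides. The paper's inradius argument sidesteps all of this: no splitting, no bilipschitz transfer, no compactness---just one inclusion and one citation.
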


\begin{proof}
Fix $r_0>0$. Following \cite[Thm.~4.11]{rv1}, to show the validity of \eqref{eq:isnqgdbl1}, we only need to obtain a lower bound $\delta$ for the inradius of $\clb_C(x,r_0)$ independent of $x\in C$. Then a relative isoperimetric inequality is satisfied in $\clb_C(x,r)$, for $0<r<r_0$, with a constant $M$ that only depends on $r_0/\de$.

Let $C_\infty$ be the asymptotic cone of $C$ with vertex at the origin. For every $x\in C$, we have $x+C_\infty=\bigcap_{\la>0} h_{x,\la}(C)=\bigcap_{1\ge\la>0} h_{x,\la}(C)\subset C$.  Fix $r_0>0$ and $x\in C$. As $x+C_\infty\subset C$, we get $\clb_{x+C_\infty}(x,r) \subset \clb_C(x,r)$. Since $C_{\infty}$ is non-degenerate, then we can  pick $\de>0$ and $y\in C_\infty$ so that $B(y,\delta)\subset \clb_{C_\infty}(0,r_0)$. Hence $B(x+y,\delta)\subset \clb_{x+C_\infty}(x,r_0)$. This provides the desired uniform lower bound for the inradius of $\clb(x,r_0)$.

%
%

We now prove \eqref{eq:isnqgdbl1a}. Since $\vol{\clb_C(x,r)}\le \vol {\clb(x,r)}$, it is enough to take $\ell_2=\omega_{n+1}=\vol{\clb(0,1)}$. For the remaining inequality, using the same notation as above, we have
\begin{align*}
\vol{\clb(x,r)\cap C}&= \vol{\clb(x,\la r_0)\cap C}\ge \vol{h_{x,\la}(\clb(x,r_0) \cap C)}
\\
&=\la^{n+1} \vol{\clb(x,r_0)\cap C}\ge \la^{n+1} \vol{\clb(y(x),\de)}
\\
&=\omega_{n+1}(\de/r_0)^{n+1}\,r^{n+1},
\end{align*}
and we take $\ell_1=\omega_{n+1}(\de/r_0)^{n+1}$.
\end{proof}

\begin{theorem}
\label{thm:optimaconi}
Let $C$ be a convex body with non-degenerate asymptotic cone $C_\infty$. Then
\begin{equation}
\label{eq:icgeicinfty}
\frac{I_C}{I_{C_{\infty}}}\ge 1.
\end{equation}
Moreover
\begin{equation}
\label{eq:optimanondegen}
\lim_{v\to\infty}\frac{I_C(v)}{I_{C_{\infty}}(v)}=1.
\end{equation}
\end{theorem}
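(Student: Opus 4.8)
The plan is to establish the two assertions of Theorem~\ref{thm:optimaconi} separately, both by exploiting the homothetic structure of the asymptotic cone and the localization Lemma~\ref{lem:I_C=inf I_B r} together with the uniform isoperimetric and volume estimates of Lemma~\ref{lemma:isnqgdblconi}.

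First I would prove the inequality $I_C\ge I_{C_\infty}$. The idea is that, up to translation, $x+C_\infty\subset C$ for every $x\in C$, so $C$ is locally ``larger'' than a cone only at the scale where its boundary curves; at large scales it is essentially $C_\infty$. Quantitatively, fix a volume $v$ and a bounded finite perimeter set $E\subset C$ with $\vol{E}=v$. Using Lemma~\ref{lem:I_C=inf I_B r}, we may assume $E\subset\clb_C(0,r)$ for some $r$. Now rescale by a small factor $\la\to 0$: set $E_\la=\la E\subset\la C$. Since $\la C\to C_\infty$ in pointed Hausdorff distance, and $E_\la$ is supported near the vertex where the convergence is uniform, Lemma~\ref{lem:niceapprox} produces sets $\tilde E_\la\subset C_\infty$ with $\vol{\tilde E_\la}=\la^{n+1}v$ and $\pp_{C_\infty}(\tilde E_\la)\le\pp_{\la C}(E_\la)+o(\la^n)=\la^n\pp_C(E)+o(\la^n)$, using Lemma~\ref{lem:bilip} and Remark~\ref{rem:lipschitz}. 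Hence $I_{C_\infty}(\la^{n+1}v)\le\la^n\pp_C(E)+o(\la^n)$. Dividing by $\la^n$ and invoking the scaling identity \eqref{eq:isopkon}, $I_{C_\infty}(\la^{n+1}v)=\la^n I_{C_\infty}(v)$, we obtain $I_{C_\infty}(v)\le\pp_C(E)+o(1)$; letting $\la\to0$ and then taking the infimum over $E$ gives $I_{C_\infty}(v)\le I_C(v)$, which is \eqref{eq:icgeicinfty}.

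Next I would prove the asymptotic equality \eqref{eq:optimanondegen}. By \eqref{eq:icgeicinfty} it suffices to show $\limsup_{v\to\infty} I_C(v)/I_{C_\infty}(v)\le 1$, i.e.\ to construct, for large $v$, competitors in $C$ whose perimeter is at most $(1+o(1))I_{C_\infty}(v)$. The natural candidate is a geodesic ball $B_{C_\infty}(0,R)\subset C_\infty$ centered at the vertex, which realizes $I_{C_\infty}$. For large $R$, its volume $v(R)\sim c\,R^{n+1}$. Now I would push this ball back into $C$: since the slices $C_t$ and $(C_\infty)_t$ have Hausdorff distance $o(t)$ as $t\to\infty$ (equivalently, after rescaling by $R^{-1}$, the dilated bodies $R^{-1}C$ converge to $C_\infty$ in pointed Hausdorff distance), applying Lemma~\ref{lem:niceapprox} to the rescaled sets gives a set $E_R\subset R^{-1}C$ of volume $v(R)/R^{n+1}$ with perimeter converging to $I_{C_\infty}(1)\cdot(v(R)/R^{n+1})^{n/(n+1)}$. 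Rescaling back by $R$ and using Lemma~\ref{lem:link I laC I C} (or directly Lemma~\ref{lem:bilip} with Remark~\ref{rem:lipschitz}), we get $I_C(v(R))\le (1+o(1))\,R^n I_{C_\infty}(1)(v(R)/R^{n+1})^{n/(n+1)}=(1+o(1))I_{C_\infty}(v(R))$. Finally, to pass from the discrete set of volumes $\{v(R)\}$ to all large $v$, I would use continuity/monotonicity of the profiles: $I_C$ and $I_{C_\infty}$ are both continuous (the latter explicitly via \eqref{eq:isopkon}), so interpolating between consecutive values $v(R)$, $v(R')$ and using the Hölder-type control $v^{n/(n+1)}$ grows slowly, one sees the ratio stays $1+o(1)$ for all $v\to\infty$. (Alternatively one notes $v(R)$ ranges over all sufficiently large reals as $R$ varies continuously.)

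The main obstacle is the upper bound half, specifically controlling the perimeter loss when transplanting the optimal cone ball into $C$: the convergence $R^{-1}C\to C_\infty$ is only in Hausdorff distance on compact sets, and one must check that the bilipschitz maps furnished by the proof of \cite[Thm.~3.4]{rv1}, restricted to a fixed Euclidean ball containing the rescaled competitor, have bilipschitz constants tending to $1$ uniformly — this is where the quantitative hypothesis \eqref{eq:def ex asymp con} (the slices converging faster than linearly) is essential, rather than merely non-degeneracy of $C_\infty$. In the lower bound, by contrast, the inclusion $x+C_\infty\subset C$ makes the argument essentially soft, and the delicate point there is only the justification, via Lemma~\ref{lem:niceapprox}, that the approximation error is genuinely $o(\la^n)$ and not merely $O(\la^n)$; choosing the approximating sets supported in the regular part of $\ptl_{C_\infty}(\text{limit set})$ and using that $\Lip(f_i),\Lip(f_i^{-1})\to 1$ handles this.
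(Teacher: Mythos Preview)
Your overall strategy---rescale, transfer via bilipschitz maps coming from the pointed Hausdorff convergence $\lambda C\to C_\infty$, and exploit the homogeneity \eqref{eq:isopkon} of $I_{C_\infty}$---is exactly the paper's. Two points need correction.

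First, in the lower bound you invoke Lemma~\ref{lem:niceapprox} to manufacture $\tilde E_\lambda\subset C_\infty$ from $E_\lambda\subset\lambda C$. That lemma is stated in the opposite direction: given a set in the \emph{limit} body, it produces approximants in the $C_i$. The paper instead uses the bilipschitz maps $f_i:r_i^{-1}C\cap\clb(0,1)\to C_\infty\cap\clb(0,1)$ from \cite[Thm.~3.4]{rv1} directly, and compares the scale-invariant quotient $\pp_C(E)/|E|^{n/(n+1)}$ with $I_{C_\infty}(1)$ via Lemma~\ref{lem:bilip}. Since the maps are bilipschitz your direction is of course fine, but you should cite the maps themselves rather than Lemma~\ref{lem:niceapprox}. (For the upper bound your use of Lemma~\ref{lem:niceapprox} is in the correct direction and matches the paper.)

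Second, and more substantively, your closing paragraph asserts that the conical-boundedness condition \eqref{eq:def ex asymp con} is ``essential'' for the upper bound. It is not: Theorem~\ref{thm:optimaconi} assumes only that $C_\infty$ is non-degenerate, and this already yields pointed Hausdorff convergence $\lambda C\to C_\infty$ (stated in \S\ref{sec:preliminaries}), hence bilipschitz maps with constants tending to $1$ on any fixed ball. The stronger hypothesis \eqref{eq:def ex asymp con} enters only in Section~\ref{sec:conically}, where one needs control of the geometry at \emph{fixed} scale far out in $C$, not merely after rescaling.

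Finally, the paper sidesteps your interpolation worry entirely: it proves $\lim_{\lambda\to 0}I_{\lambda C}(1)=I_{C_\infty}(1)$ and then reads off \eqref{eq:optimanondegen} from the scaling identities $I_{\lambda C}(1)=\lambda^n I_C(\lambda^{-(n+1)})$ and $I_{C_\infty}(1)=\lambda^n I_{C_\infty}(\lambda^{-(n+1)})$, so every large $v$ is hit directly as $\lambda^{-(n+1)}$ and no continuity argument is needed.
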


\begin{proof}
Fix $v>0$ and let $E\subset C$ be any bounded set of finite perimeter and volume $v$.
Let $q\in \intt(C_{\infty}\cap\clb(0,1))$ and  $B_q\subset \intt(C_{\infty}\cap\clb(0,1))$ be a  Euclidean geodesic ball. Now  consider a solid cone $K_q$ with vertex $q$ such that $0\in \intt(K_q)$ and $K_q\cap C\cap \ptl B(0,1)=\emptyset$.
Let $r_i \uparrow\infty$. By definition of the asymptotic cone,
$r_i^{-1} C\cap \clb(0,1)$ converges to $ C_{\infty} \cap \clb(0,1)$ in Hausdorff distance. Thus we may construct, as in \cite[Thm.~3.4] {rv1}, a family of bilipschitz maps $f_i:r_i^{-1}C\cap \clb(0,1)\to C_{\infty}\cap\clb(0,1)$ which fix the points in the ball $B_q$, and such that 
\begin{equation}
\label{eq:optimanondegen1}
\Lip(f_i), \Lip(f^{-1}_i)\to 1.
\end{equation}
So $f_i$ is the identity in $B_q$ and it is extended linearly along the segments leaving from $q$. For large enough $i\in \nn$ we have, $E\subset C\cap B(0,r_i)$ and $r_i^{-1}E\subset K_q$, since $\diam(E)<\infty$. For this large $i$, by construction, the maps $f_i$ have the additional property
\begin{equation}
\label{eq:optimanondegen2}
\pp_{C_{\infty}}(f_i(r_i^{-1}E))=\pp_{C_{\infty}\cap\clb(0,1)}(f_i(r_i^{-1}E)).
\end{equation}
For $i$  large enough, $\pp_C(E)=\pp_C(E\cap B(0,r))$. Thus by Lemma \ref{lem:bilip}, \eqref{eq:isopkon}  and the above, we get
\begin{equation}
\label{eq:optimanondegen4}
\begin{split}
\frac{\pp_{C}(E)}{\vol{E}^{n/(n+1)}}=&\frac{\pp_{r_i^{-1}C}(r_i^{-1}E)}{\vol{r_i^{-1}E}^{n/(n+1)}}\ge \frac{\pp_{C_\infty}(f_i(r_i^{-1}E))}{\vol{f_i(r_i^{-1}E)}^{n/(n+1)}}\,(\Lip(f_i)\Lip(f_i^{-1}))^{-n}
\\
\ge &\,I_{C_{\infty}}(1)\,(\Lip(f_i)\Lip(f_i^{-1}))^{-n}.
\end{split}
\end{equation}
Passing to the limit we get,
\begin{equation}
\label{eq:optimanondegen5}
\frac{\pp_C(E)}{\vol{E}^{n/(n+1)}}\ge I_{C_{\infty}}(1).
\end{equation}
Thus, by \eqref{eq:isopkon}, for every $v\ge 0$, we obtain,
\begin{equation}
\label{eq:optimanondegen6}
I_C(v)\ge I_{C_{\infty}}(v),
\end{equation}
which implies \eqref{eq:icgeicinfty}.

Let us prove now \eqref{eq:optimanondegen}. Let $\la_i\downarrow 0$, $i\in\nn$. Since $C_{\infty}$ is the asymptotic cone of each $\la_i C$ then the last inequality holds for every  $\la_i C, i\in\nn$. Passing to the limit we conclude
\[
I_{C_{\infty}}(1)\le\liminf_{i\to\infty}I_{\la_i C}(1).
\]
Now  consider a ball  $B_{C_{\infty}}$ centered at a vertex of $C_{\infty}$ of volume $1$,  which is an isoperimetric region by \cite{lions-pacella}. By Lemma \ref{lem:niceapprox}, there exist a sequence $E_i\subset\la_i C$ of finite perimeter sets with  $\vol{E_i}=1$ and such that $\lim_{i\to \infty} \pp_{\la_iC}(E_i)=\pp_{C}(B)$. So we get
\[
I_{C_{\infty}}(1)\ge\limsup_{i\to\infty}I_{\la_i C}(1),
\]
and we conclude
\begin{equation}
\label{eq:optimanondegen7}
I_{C_{\infty}}(1)=\lim_{i\to\infty}I_{\la_i C}(1).
\end{equation}
From \eqref{eq:optimanondegen7}, Lemma \ref{lem:link I laC I C} and the fact that $C_{\infty}$ is a cone we deduce
\[
1=\lim_{\la\to 0}\frac{I_{\la C}(1)}{I_{C_{\infty}}(1)}=
\lim_{{\la}\to
0}\frac{{\la}^nI_C(1/{\la}^{n+1})}{{\la}^n I_{C_{\infty}}(1/{\la}^{n+1})}=\lim_{v\to\infty}\frac{I_C(v)}{I_{C_{\infty}}(v)},
\]
as desired.
\end{proof}

We now prove the continuity of the isoperimetric profile of $C$. The proof of the following is adapted from \cite[Lemma 6.2]{gallot}

\begin{lemma}
\label{lem:contprofconi}
Let $C$ be a convex body with non-degenerate asymptotic cone. Then $I_C$ is continuous.
\end{lemma}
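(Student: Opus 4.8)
The plan is to prove that $I_C$ is both upper and lower semicontinuous. Upper semicontinuity is the part directly adapted from \cite[Lemma~6.2]{gallot} and holds for any unbounded convex body; lower semicontinuity is where the non-degeneracy of $C_\infty$ enters, to rule out loss of mass at infinity.

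For \emph{upper semicontinuity}, fix $v>0$ and $\eps>0$ and use Lemma~\ref{lem:I_C=inf I_B r} to choose a \emph{bounded} finite perimeter set $E\subset C$ with $\vol E=v$ and $\pp_C(E)\le I_C(v)+\eps$. Since $0<v<\vol C$ we have $\pp_C(E)=H^n(\ptl^*E\cap\intt C)>0$, so the reduced boundary meets $\intt C$; pick such a point $p$ and take a smooth vector field $X=\phi\,\nu_E(p)$ with $\phi$ a bump function supported in a small Euclidean ball about $p$ contained in $\intt C$. De Giorgi's blow-up theorem gives $\int_{\ptl^*E}\escpr{X,\nu_E}\,dH^n>0$, and we rescale $X$ so that this integral equals $1$. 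Flowing $E$ along $X$ produces bounded sets $E_t\subset C$ with $\tfrac{d}{dt}\vol{E_t}\big|_{t=0}=1$, hence a diffeomorphism $t\mapsto\vol{E_t}$ near $t=0$, while the first variation of perimeter gives $|\pp_C(E_t)-\pp_C(E)|\le c_0|t|$ for small $|t|$. Thus $I_C(w)\le\pp_C(E_{t(w)})\le I_C(v)+\eps+c_0|t(w)|$ for $w$ near $v$; letting first $w\to v$ and then $\eps\to 0$ gives $\limsup_{w\to v}I_C(w)\le I_C(v)$.

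For \emph{lower semicontinuity}, take $w_j\to v$ with $I_C(w_j)\to\liminf_{w\to v}I_C(w)=:\ell$ and apply Proposition~\ref{prp:spliting} to each $w_j$: there are isoperimetric regions $E_j$ with $\vol{E_j}=a_j\le w_j$ and $\pp_C(E_j)=I_C(a_j)$, and diverging sequences $\{E_i^j\}_i$ with $\vol{E_i^j}\to b_j:=w_j-a_j$, such that $I_C(w_j)=I_C(a_j)+\lim_i\pp_C(E_i^j)$. Passing to subsequences we may assume $a_j\to a$, $b_j\to b=v-a$, $I_C(a_j)\to\alpha$ and $\lim_i\pp_C(E_i^j)\to\lambda$, so $\ell=\alpha+\lambda$. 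Being isoperimetric, each $E_j$ satisfies a lower density bound coming from the relative isoperimetric inequality of Lemma~\ref{lemma:isnqgdblconi}, so $\diam(E_j)$ is bounded since $a_j$ is; after translating so that $E_j$ stays near the origin, \eqref{eq:isnqgdbl1a} shows the corresponding translates of $C$ subconverge in pointed Hausdorff distance to an unbounded convex body $C_1$, and $E_j\to E'$ in $L^1$ with $\vol{E'}=a$ and $\pp_{C_1}(E')\le\alpha$; since a near-optimal region of $C_1$ can be transplanted into $C$ by Lemma~\ref{lem:niceapprox}, we get $I_C(a)\le\alpha$. Applying the same recentering argument to a diagonal diverging subsequence $F_k$ extracted from the $E_i^j$ (which may be taken to be isoperimetric regions of truncated problems, as in the proof of Proposition~\ref{prp:spliting}, hence again carry uniform density bounds) produces an unbounded convex body $C_2$ and a finite perimeter set of volume $b$ in $C_2$ with relative perimeter $\le\lambda$; transplanting a near-optimal region of $C_2$ into $C$, placed far from a near-optimal bounded region of volume $a$ in $C$, gives $I_C(v)=I_C(a+b)\le I_C(a)+\lambda$. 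Hence $\ell=\alpha+\lambda\ge I_C(a)+\lambda\ge I_C(v)$, which is lower semicontinuity. The main obstacle is precisely this control of the escaping mass: one must ensure that the diverging pieces of Proposition~\ref{prp:spliting} have a uniform density bound, so that after recentering they converge without further loss of volume, and that the limit bodies $C_1,C_2$ have isoperimetric profile at least $I_C$ at the relevant volumes; both facts rely on the uniform estimates of Lemma~\ref{lemma:isnqgdblconi}, made available by the non-degeneracy of $C_\infty$.
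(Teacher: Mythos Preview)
Your route differs substantially from the paper's, and the lower-semicontinuity half has a genuine gap.

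The paper does not split into upper and lower semicontinuity. It proves two explicit one-sided estimates for $0<v_1<v_2$:
\[
I_C(v_1)\le I_C(v_2)+c\,v_1^{-n/(n+1)}(v_2-v_1)^{n/(n+1)},\qquad
I_C(v_2)\le I_C(v_1)+c'\,(v_2-v_1)^{n/(n+1)},
\]
which together give a modulus of continuity. For the first, take a near-optimal $E$ of volume $v_2$; the uniform lower bound $\vol{B_C(x,r)}\ge \ell_1 r^{n+1}$ (from $x+C_\infty\subset C$ --- this is the \emph{only} place non-degeneracy enters) plus a Fubini averaging argument locates $x$ with $\vol{E\cap B_C(x,r)}\ge v_2-v_1$ for a specific $r$; excising $E\cap B_C(x,s)$ for some $s\le r$ yields a competitor of volume $v_1$ with perimeter increase $\le mr^n$. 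For the second, take a bounded near-optimal set of volume $v_1$ (Lemma~\ref{lem:I_C=inf I_B r}) and add a disjoint Euclidean ball of volume $v_2-v_1$. Both are elementary and self-contained.

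Your upper-semicontinuity deformation argument is fine. The lower-semicontinuity argument, however, rests on density and no-mass-loss claims that are not available at this point. You assert that the isoperimetric pieces $E_j$ and the diverging pieces $E_i^j$ carry uniform lower density bounds ``coming from Lemma~\ref{lemma:isnqgdblconi}''; but that lemma gives only a relative isoperimetric inequality in balls, not the clearing-out step needed to pass from ``each $E_j$ is bounded'' (Proposition~\ref{prp:isopbound}) to ``the diameters are uniformly bounded'', or to prevent volume leaking to infinity after recentering. The paper's actual density machinery (Proposition~\ref{prp:leon rigot lem 42 coni}, Corollary~\ref{cor:lwrdnbnd}) is stated only for \emph{conically bounded} bodies and is proved \emph{after} Lemma~\ref{lem:contprofconi}, using the concavity of $I_C$ (Proposition~\ref{prp: I C is concave conical}) and Lemma~\ref{lem:I_C(v)> cv}, both of which rely on the continuity you are trying to establish --- so invoking them here is circular. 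Your further assumption that the diverging pieces of Proposition~\ref{prp:spliting} ``may be taken to be isoperimetric regions of truncated problems'' is not part of that proposition's statement. You flag all of this yourself as ``the main obstacle'', and it is: a compactness/recentering proof can presumably be carried out, but it requires substantially more than what you have written, whereas the paper's two-inequality argument sidesteps the issue entirely.
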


\begin{proof}
Given $r>0$ and $x\in C$, we get $B(x,r)\cap (x+C_{\infty})\subset B(x,r)\cap C$. Thus
\[
\vol{B_C (x,r)}\ge \vol{B_{x+C_{\infty}} (x,r)}= \vol{B_{x+C_{\infty}} (x,1)}\,r^{n+1}=\ell_1r^{n+1},
\]
for all $x\in C$ and $r>0$, where $\ell_1=|B_{C_\infty}(0,1)|$.

Let $E \subset C $ a finite perimeter set and $r>0$.  We apply Fubini’s Theorem to the function $C \times E \to \rr$ defined by
\[
(x, y) \mapsto \chi_{B_C (x,r)} ( y)
\]
to obtain
\[
\int_C\vol{B_C (x,r)\cap E}\,dx=\int_E\vol{B_C (y,r)}\,dy \ge \ell_1r^{n+1}\vol{E}.
\]
This implies the existence of some $x\in C$ (depending on $E$ and $r>0$) such that
\begin{equation}
\label{eq:contprofconi1}
\vol{B_C (x,r)\cap E}\ge \ell_1 r^{n+1}\frac{\vol{E}}{\vol{C}}.
\end{equation}

Fix now two volumes $0<v_1<v_2$. Define $r > 0$ by
\[
\ell_1 r^{n+1}\frac{v_2}{\vol{C}}=v_2-v_1.
\]
Fix $\eps>0$. From the definition of the isoperimetric profile, there exists a finite perimeter set $E\subset C$ of volume $v_2$ such that $\pp_C(E)\le I_C(v_2)+\eps$ . From the above discussion, there exists $x\in C$ so that \eqref{eq:contprofconi1} holds. This implies
\[
\vol{E \setminus B_C (x,r)}\le \vol{E}-\vol{B_C (x,r)\cap E}\le v_2 -\ell_1 r^{n+1}\frac{v_0}{\vol{C}}=v_1.
\]
As the function $t\mapsto \vol{E \setminus B_C (x,t)}$ is continuous and monotone, there exists $0 < s \le  r $ so that $ \vol{E \setminus B_C (x,s)} =v_1$ . Hence we get
\begin{align*}
I_C (v_1)&\le \pp_C(E \setminus B_C (x,s))\le  \pp_C(E)+ \pp_C( B_C (x,s))
\\
&\le I_C (v_2)+\eps+m s^n\le I_C (v_2)+\eps+m r^n 
\\
&\le I_C (v_2)+\eps+c\,v_1^{-n/(n+1)}(v_2-v_1)^{n/(n+1)},
\end{align*}
where $m>0$ is the perimeter of a Euclidean geodesic sphere of radius 1 and $C>0$ is explicitly computed from the definition of $r$. As $\eps$ was arbitrary, we get
\begin{equation}
\label{eq:left}
I_C(v_1)\le I_C(v_2)+c\,v_1^{-n/(n+1)}(v_2-v_1)^{n/(n+1)}.
\end{equation}

We now prove a second inequality. By Lemma \ref{lem:I_C=inf I_B r}, given $\eps>0$,  there exists $R>0$ and a finite perimeter set $E\subset \clb_C(0,R)$ of volume $v_0$ such that $\pp_C(E)\le I_C(v_1)+\eps$ . Now consider a Euclidean geodesic ball $B$ of volume $v_2-v_1$ in $\intt(C)\setminus \clb(0,R))$. We have
\[
I_C(v_2)\le P_C(E\cup B)=P_C(E)+P_C(B)\le I_C(v_1)+\eps+c\,(v_2-v_1)^{n/(n+1)},
\]
where $c'>0$ is the Euclidean isoperimetric constant. Since $\eps>0$ is arbitrary, we get
\begin{equation}
\label{eq:right}
I_C(v_2)\le I_C(v_1)+c'\,(v_2-v_1)^{n/(n+1)}.
\end{equation}

Now the continuity of $I_C$ follows from \eqref{eq:left} and \eqref{eq:right}.
\end{proof}

\section{Conically bounded convex bodies}
\label{sec:conically}

In this Section we shall obtain a number of results for conically bounded convex bodies with smooth boundary. Observe that this assumption does not guarantee that the asymptotic cone has smooth boundary out of the vertexes: simply consider the function in $\rr^2$ defined by $f(x,y)=(1+x^2)^{1/2}+(1+y^2)^{1/2}$. The asymptotic cone of its epigraph can be computed as in the proof of Lemma~\ref{lem:C_infty=C^infty} as $\{(x,y,z)\in\rr^3:z\ge f_\infty(x,y)\}$, where $f_\infty$ is the limit, when $\mu\to\infty$, of the functions $f_\mu(p)=\mu^{-1}f(\mu p)$. In our case, $f_\infty(x,y)=|x|+|y|$.

We shall say that a conically bounded convex body is \emph{regular} if it has smooth boundary and its asymptotic cone has smooth boundary out of the vertexes.

The following elementary result on convex functions will be needed

\begin{lemma}
\label{lem:convexasymp}
Let $a>0$, and $f:[0,+\infty)\to [0,+\infty)$ a convex function~satisfying
\[
\lim_{x\to\infty} f(x)-(ax+b)=0.
\]
Then, for every $x_0\ge 0$ and any $u_0\ge f(x_0)$, the halfline $\{(x,u_0+a\,(x-x_0)):x\ge x_0\}$ is contained in the epigraph of $f$.
\end{lemma}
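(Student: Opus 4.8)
The plan is to exploit convexity together with the asymptotic-slope hypothesis. Fix $x_0 \ge 0$ and $u_0 \ge f(x_0)$. Since $f$ is convex, the slopes of chords are monotone: for $x_0 < x < y$ the difference quotient $\frac{f(y)-f(x)}{y-x}$ is nondecreasing in each variable. The hypothesis $f(x) - (ax+b) \to 0$ forces the asymptotic slope of $f$ to be exactly $a$; more precisely, for any fixed $x > x_0$ and $y \to \infty$, $\frac{f(y)-f(x)}{y-x} = \frac{(ay+b+o(1)) - f(x)}{y-x} \to a$. By monotonicity of difference quotients, this limiting value $a$ is an upper bound: for every $y > x_0$ we have $\frac{f(y)-f(x_0)}{y-x_0} \le a$, i.e.\ $f(y) \le f(x_0) + a(y-x_0)$.

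From here the conclusion is immediate. For $x \ge x_0$,
\[
f(x) \le f(x_0) + a(x-x_0) \le u_0 + a(x-x_0),
\]
using $f(x_0) \le u_0$. Hence the point $(x, u_0 + a(x-x_0))$ lies above the graph of $f$, i.e.\ it belongs to the epigraph $\{(x,u) : u \ge f(x)\}$, for every $x \ge x_0$. This is exactly the assertion that the halfline $\{(x, u_0 + a(x-x_0)) : x \ge x_0\}$ is contained in the epigraph of $f$.

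The only point requiring a little care is the justification that the asymptotic slope is $a$ \emph{and} that $a$ dominates all the chord slopes $\frac{f(y)-f(x_0)}{y-x_0}$ — this is where convexity is essential, since without it the convergence $f(x)-(ax+b)\to 0$ says nothing about the behavior of $f$ on a bounded interval. I would phrase this via the standard fact that for a convex function the right-derivative $f'_+(x_0)$ exists and $\frac{f(y)-f(x_0)}{y-x_0}$ is nondecreasing in $y$, hence bounded above by its limit as $y \to \infty$, which the hypothesis identifies as $a$. No genuine obstacle is expected; the statement is elementary and the argument is two lines once the monotone-chord-slope observation is in place.
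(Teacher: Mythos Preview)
Your proof is correct and follows essentially the same route as the paper's. Both arguments rest on the monotonicity of chord slopes for a convex function: the paper proves directly that $x\mapsto (x-x_0)^{-1}(f(x)-u_0)$ is non-decreasing and then identifies its limit as $a$ via the asymptotic hypothesis, while you use the standard fact that $y\mapsto (y-x_0)^{-1}(f(y)-f(x_0))$ is non-decreasing, identify its limit as $a$, and apply $f(x_0)\le u_0$ at the end; these are cosmetic variants of the same idea.
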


\begin{proof}
Let us prove first that the function $x\mapsto (x-x_0)^{-1}(f(x)-u_0)$ is non-decreasing. Let $x_0<x<z$ so that $x=x_0+\la\,(z-x_0)$, with $\la=(x-x_0)/(z-x_0)$. By the concavity of $f$ we get $f(x)=f(\la\,z+(1-\la)\,x_0)\le \la\,f(z)+(1-\la)\,f(x_0)\le \la\,f(z)+(1-\la)\,u_0$. Hence $f(x)-u_0\le\la\,(f(z)-u_0)$, what implies
\[
\frac{f(x)-u_0}{x-x_0}\le\frac{f(z)-u_0}{x-x_0},
\]
as we claimed.

For any $x>x_0$, the segment joining the points $(x_0,u_0)$ and $(x,f(x))$ is contained in the epigraph of $f$ by the concavity of $f$. Moreover, we have
\[
\frac{f(x)-u_0}{x-x_0}\le\frac{f(x)-f(x_0)}{x-x_0}=
\frac{f(x)-ax-b}{x-x_0}-\frac{f(x_0)-ax-b}{x-x_0},
\]
and taking limits we get
\[
\lim_{x\to\infty}\frac{f(x)-u_0}{x-x_0}\le a,
\]
by the monotonicity of $x\mapsto (x-x_0)^{-1}(f(x)-u_0)$ and the asymptotic property of the line $ax+b$. So we conclude $f(x)-u_0\le a\,(x-x_0)$ for all $x>x_0$, as claimed.
\end{proof}

\begin{proposition}
\label{prp:asimpineqconi}
Let $C$ be a regular conically bounded convex body, and $\{E_i\}_{i\in\nn}$ a diverging sequence of finite perimeter sets with $\lim_{i\to\infty}\vol{E_i}=v$. Then,
\[
\liminf_{i\to\infty} P_C(E_i)\ge I_H(v),
\]
where $I_H$ is the isoperimetric profile of a closed half-space $H$.
\end{proposition}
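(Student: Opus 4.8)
The plan is to deduce the inequality from its counterpart for the regular convex cone $C^\infty$, Proposition~\ref{prp: asympt ineq cone}, by transporting the sets $E_i$ from $C$ into $C^\infty$ through a bilipschitz map whose distortion tends to $1$ in the region where the $E_i$ eventually lie. Write $C$ as the epigraph of the smooth convex function $f:\rr^n\to\rr^+$ and $C^\infty$ as the epigraph of the conical convex function $f^\infty:\rr^n\to\rr^+$; by the regularity hypothesis $f^\infty$ is of class $C^2$ on $\rr^n\setminus\{0\}$, and since the slices $C_t$ are convex bodies their Hausdorff limits $(C^\infty)_t=\{f^\infty\le t\}$ are bounded, so $\kappa:=\min_{|u|=1}f^\infty(u)>0$. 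Put $g:=f-f^\infty\ge 0$.

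First I would extract from \eqref{eq:def ex asymp con} the analytic facts about $g$. Comparing, as in the proof of Lemma~\ref{lem:C_infty=C^infty}, the radial functions of the slices with the values of $f$ and $f^\infty$, one obtains that $g$ is bounded and $g(x)\to 0$ as $|x|\to\infty$; and since $f$ is convex with $f(x)\le f^\infty(x)+\sup g\le (\Lip (f^\infty))|x|+\sup g$, the gradient $\nabla f$, hence also $\nabla g$, is bounded. The crucial step is then to prove that $\nabla g(x)\to 0$ as $|x|\to\infty$. For a unit vector $e$ and $\delta>0$, combining the subgradient inequality of $f$ at $x$ with that of $f^\infty$ at $x+\delta e$ (legitimate since $x+\delta e\neq 0$ for $|x|$ large) yields
\[
\langle \nabla g(x),e\rangle\le\big|\nabla f^\infty(x+\delta e)-\nabla f^\infty(x)\big|+\frac{g(x+\delta e)}{\delta};
\]
letting $|x|\to\infty$ with $\delta$ fixed, the second term vanishes because $g\to 0$ and the first because $\nabla f^\infty$ is continuous and $0$-homogeneous on $\rr^n\setminus\{0\}$, so its values at $x$ and $x+\delta e$ differ by $o(1)$. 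Hence $\limsup_{|x|\to\infty}\langle\nabla g(x),e\rangle\le 0$ for every $e$, and since $\nabla g$ is bounded a short compactness argument gives $\nabla g(x)\to 0$. This is exactly where regularity of $C^\infty$ is essential: if $f^\infty$ had a corner, $\nabla g$ would not die out, the limiting shape near infinity would be a wedge rather than a halfspace, and the bound by $I_H$ would fail. I expect this estimate to be the main obstacle.

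Next, define $\Psi:C\to C^\infty$ by $\Psi(x,z)=(x,z-g(x))$; it is a bijection with inverse $(x,w)\mapsto(x,w+g(x))$, carrying $\intt C$ onto $\intt C^\infty$ and $\ptl C$ onto $\ptl C^\infty$, with differential the triangular matrix with blocks $\mathrm{Id}$, $-\nabla g$, $1$ (and similarly for $\Psi^{-1}$). By $\nabla g(x)\to 0$, for each $\eps>0$ there is $T_\eps>0$ such that the restriction of $\Psi$ to the convex set $C\cap\{x_{n+1}\ge T_\eps\}$ is bilipschitz with $\Lip(\Psi),\Lip(\Psi^{-1})\le 1+\eps$; moreover $|\Psi(p)-p|\le\sup g<\infty$ for all $p$. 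On the other hand, any $(x,z)\in C$ satisfies $z\ge f^\infty(x)\ge\kappa|x|$, hence $z\ge(1+\kappa^{-2})^{-1/2}\,|(x,z)|$; as $\{E_i\}$ is diverging, this forces $\inf\{x_{n+1}:x\in E_i\}\to\infty$. Thus for $i$ large $E_i\subset\{x_{n+1}\ge T_\eps\}$, the sequence $\{\Psi(E_i)\}$ is diverging in the regular cone $C^\infty$, and $\lim_i|\Psi(E_i)|\in[(1+\eps)^{-(n+1)}v,(1+\eps)^{n+1}v]$.

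To finish, fix $\eps>0$ and pass to a subsequence along which $P_C(E_i)\to\liminf_i P_C(E_i)=:L$ (assume $L<\infty$, otherwise there is nothing to prove) and $|\Psi(E_i)|\to v_\eps$. Arguing as in the proof of Lemma~\ref{lem:bilip} — $\Psi$ matches interior with interior and boundary with boundary and is $(1+\eps)$-bilipschitz near $E_i$ — one gets $P_{C^\infty}(\Psi(E_i))\le(1+\eps)^n P_C(E_i)$ for $i$ large. Proposition~\ref{prp: asympt ineq cone} applied to $\{\Psi(E_i)\}$ in $C^\infty$, together with \eqref{eq:isopkon}, then gives
\[
(1+\eps)^n L\ \ge\ \liminf_{i\to\infty}P_{C^\infty}(\Psi(E_i))\ \ge\ I_H(v_\eps)\ \ge\ I_H\big((1+\eps)^{-(n+1)}v\big)=(1+\eps)^{-n}I_H(v).
\]
Hence $L\ge(1+\eps)^{-2n}I_H(v)$ for every $\eps>0$, so $L\ge I_H(v)$, which is the assertion.
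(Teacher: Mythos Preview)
Your overall strategy---transport the $E_i$ into $C^\infty$ by a bilipschitz map whose constants tend to $1$ on the region where the $E_i$ eventually lie, then invoke Proposition~\ref{prp: asympt ineq cone}---is exactly the paper's. The specific map you choose, however, does \emph{not} have Lipschitz constants tending to $1$ on $C\cap\{x_{n+1}\ge T\}$, and this is a genuine gap.

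The difficulty is at the cone's axis. Your estimate $\nabla g(x)\to 0$ is correct and its proof is neat, but the decay is as $|x|\to\infty$, not as $x_{n+1}\to\infty$. The differential of $\Psi(x,z)=(x,z-g(x))$ depends only on $x$, and the projection of the convex set $C\cap\{x_{n+1}\ge T\}$ onto the $x$-hyperplane is all of $\rr^n$ for every $T$ (for any $x$ take $z\ge\max\{f(x),T\}$). Consequently $\Lip\big(\Psi|_{C\cap\{x_{n+1}\ge T\}}\big)$ equals the global $\Lip(\Psi)$ and does not improve as $T\to\infty$. Near $x=0$ the gradient $\nabla f^\infty(x)=\nabla f^\infty(x/|x|)$ is $0$-homogeneous and genuinely discontinuous at the origin, while $\nabla f$ is smooth there, so $|\nabla g|$ stays bounded away from~$0$ in a neighbourhood of the axis; in the revolution case of Section~\ref{sec:revolution}, for instance, $|\nabla g(x)|\to a>0$ as $|x|\to 0$. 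Since a diverging sequence can hug the axis---take $E_i$ a ball centred at $(0,i)$---you cannot simply avoid this region, and the inequality $P_{C^\infty}(\Psi(E_i))\le(1+\eps)^n P_C(E_i)$ fails.

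The paper's map is horizontal rather than vertical: $F(x,t)=(\tilde f_t(x),t)$, where $\tilde f_t$ is the slice-wise radial map of \cite[Thm.~3.4]{rv1} from $\pi(C_t)$ onto $\pi((C^\infty)_t)$ that \emph{fixes an inner ball} $B(0,tr_0/t_0)$. Thus $F$ is the identity near the axis by construction, and all the distortion is pushed out to the boundary of the slices, where $C_t$ and $(C^\infty)_t$ actually differ; the bulk of the paper's argument is then the (fairly laborious) verification that this distortion tends to~$0$ as the height grows. Your vertical shear could in principle be salvaged by replacing $g$ with a height-dependent truncation vanishing on $\{|x|\le c\,t\}$, but making the resulting map land in $C^\infty$ and controlling the new cross-terms essentially reproduces the paper's construction.
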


\begin{proof}
Assume that $0$ is the vertex of $C_\infty=C^\infty$. As usual, let $C_{s}=C\cap\{x_{n+1}=s\}$.  The orthogonal projection of $\rr^{n+1}$ over $\{x_{n+1}=0\}$ will be denoted by $\pi$. The balls considered in what follows will be $n$-dimensional.



For $t_0>0$ take a positive radius $r_0>0$ so that $B(0,r_0)\times\{t_0\}\subset\intt C_{t_0}$. Is is an easy consequence of Lemma~\ref{lem:convexasymp} that the cone of base $B(0,r_0)\times\{t_0\}$ with vertex $0$, intersected with $t\ge t_0$, is contained in the interior of $C$. The section of this cone at height $t$ is $B(0,tr_0/t_0)\times\{t\}$, and so $B(0,tr_0/t_0)\subset\intt \pi(C_t)$.

We define $F:C\cap\{t\ge t_0\}\to C_\infty\cap\{t\ge t_0\}$ by
\[
F(x,t)=(\tilde{f}_t(x),t),
\]
where $\tilde{f}_t:\pi(C_t)\to\pi((C_\infty)_t)$ is the map defined by equation (3.6) in \cite{rv1} which leaves fixed the~points in the inner ball $B(0,tr_0/t_0)\subset\intt\pi(C_t)$. For $i\ge t_0$, let $F_i=F|_{C\cap\{x_{n+1}\ge i\}}$.

Let us denote by $h_\la$ the dilation in $\rr^n$ of ratio $\la>0$. Taking $\la=t_0/t$ we have
\[
B(0,r_0)=h_\la(B(0,\tfrac{t}{t_0}r_0))\subset\intt h_\la(\pi(C_t))\subset\intt h_\la(\pi((C_\infty)_t))=\intt \pi((C_\infty)_{t_0}).
\]
When $t\to\infty$, $h_\la(\pi(C_t))\to\pi((C_\infty)_{t_0})$ in Hausdorff distance since $C_\infty$ is the asymptotic cone of $C$. Let $f_t:h_\la(\pi(C_t))\to\pi((C_\infty)_{t_0})$ be the family of maps given by (3.6) in \cite{rv1} leaving fixed the ball $B(0,r_0)$ so that $\Lip(f_t)$, $\Lip(f_t^{-1})\to 1$. It is immediate to show that $\tilde{f}_t=h_{\la^{-1}}\circ f_t\circ h_\la$ and that $\Lip(\tilde{f}_t)=\Lip(f_t)$, $\Lip(\tilde{f}_t^{-1})=\Lip(f_t^{-1})$. We conclude that $\Lip(\tilde{f}_t)$, $\Lip(\tilde{f}_t^{-1})\to 1$.

Let $t\ge s\ge i\ge t_0$. We estimate
\begin{equation}
\label{eq:fxt2}
\begin{split}
|F(x,t)-F(y,s)|&=\big(|\tilde{f}_t(x)-\tilde{f}_s(y)|^2+|t-s|^2\big)^{1/2}
\\
&=\big(|\tilde{f}_t(x)-\tilde{f}_t(y)+\tilde{f}_t(y)-\tilde{f}_s(y)|^2+|t-s|^2\big)^{1/2}
\\
&=\big(|\tilde{f}_t(x)-\tilde{f}_t(y)|^2+|\tilde{f}_t(y)-\tilde{f}_s(y)|^2
\\
&\qquad+2\,|\tilde{f}_t(x)-\tilde{f}_t(y)||\tilde{f}_t(y)-\tilde{f}_s(y)|+|t-s|^2\big)^{1/2}.
\end{split}
\end{equation}

We have $|(\tilde{f}_t(x)-\tilde{f}_t(y))|\le\Lip(\tilde{f}_t)|x-y|$. By  \cite[Thm.~3.4]{rv1}, we can write $\Lip(\tilde{f}_t)<(1+\eps_i)$ for $t\ge i$, where $\eps_i\to 0$ when $i\to\infty$. Hence
\begin{equation}
\label{eq:eksis f t-f s < Lip cyl}
|\tilde{f}_t(x)-\tilde{f}_t(y)|\le(1+\eps_i)\,|x-y|, \qquad\text{for}\  t\ge i.
\end{equation}

We estimate now $|\tilde{f}_t(y)-\tilde{f}_s(y)|$.

In case $|y|\le sr_0/t_0\le tr_0/t_0$, we trivially have $|\tilde{f}_t(y)-\tilde{f}_s(y)|=0$. Let us consider the case $|y|\ge tr_0/t_0\ge sr_0/t_0$. Set $u=y/|y|$ and for every $t>0$ denote $\rho_t(u)=\rho(C_t,u)$, $\tilde{\rho_t}(u)=\rho ((C_{\infty})_t,u)$
hence by (3.7) in \cite[Thm.~3.4]{rv1} we have
\begin{equation}
\label{eq:asumpineqconi4}
\begin{split}
|\tilde{f}_t(y)-\tilde{f}_s(y)|&=\Big|\frac{(tr_0/t_0-|y|)}{\tilde{\rho}_t(u)-tr_0/t_0}\big(\tilde{\rho}_t(u)-\rho_t(u)\big)
-\frac{(sr_0/t_0-|y|)}{\tilde{\rho}_s(u)-sr_0/t_0}\,\big(\tilde{\rho}_s(u)-\rho_s(u)\big)\Big|
\\
&\le\frac{|sr_0/t_0-|y||}{|\tilde{\rho}_s(u)-sr_0/t_0|}\big|(\tilde{\rho}_t(u)-\rho_t(u))
-(\tilde{\rho}_s(u)-\rho_s(u)\big)\big|
\\
&+\big|(\tilde{\rho}_t(u)-\rho_t(u))\big| \Big| \frac{tr_0/t_0-|y|}{\tilde{\rho}_t(u)-tr_0/t_0}-\frac{s r_0/t_0-|y|}{\tilde{\rho}_s(u)-sr_0/t_0}\Big|
\\
&\le\big|(\tilde{\rho}_t(u)-\rho_t(u))-(\tilde{\rho}_s(u)-\rho_s(u)\big)\big|
\\
&\qquad\qquad\qquad+M\Big| \frac{tr_0/t_0-|y|}{\tilde{\rho}_t(u)-tr_0/t_0}-\frac{sr_0/t_0-|y|}{\tilde{\rho}_s(u)-sr_0/t_0}\Big|,
\end{split}
\end{equation}
where we have used
\[
\frac{|sr_0/t_0-|y||}{|\tilde{\rho}_s(u)-sr_0/t_0|}\big|\le 1,
\]
since $|y|\le \tilde{\rho}_s(u)$ (because $y\in\pi(C_s)\subset\pi((C_{\infty})_s) $), and $\big|(\tilde{\rho}_t(u)-\rho_t(u))\big|\le M$ for $t>1$, since $\sup_{u\in\esf^{n-1}}|\tilde{\rho}_t(u)-\rho_t(u)|\to 0$ and so that $M$ does not depend on $i, u$.
For $u\in\esf^{n-1}$, consider the functions $\rho_t(u)=\rho(C_t,u)$, $\tilde{\rho}_t(u)=\rho ((C_{\infty})_t,u)$.  Observe that, for every $u\in \esf^{n}$ orthogonal to $\ptl/\ptl x_{n+1}$, the 2-dimensional half-plane defined by $u$ and $\ptl/\ptl x_{n+1}$ intersected with $C$ is a 2-dimensional convex set, and the function $t\mapsto \rho_t(u) $ is concave with asymptotic line the function $t\mapsto \tilde{\rho}_t(u)$. Thus the function $t\mapsto\rho_t(u)-\tilde{\rho}_t(u)$ is concave, because $t\mapsto {\rho}_t(u)$ is concave and $t\mapsto \tilde{\rho}_t(u)$ is affine, and so
\begin{equation}
\label{eq:asumpineqconi5}
\begin{split}
\frac{\big|(\tilde{\rho}_t(u)-\rho_t(u))-(\tilde{\rho}_s(u)-\rho_s(u)\big)\big|}{|t-s|}\le
\big|(\tilde{\rho}_i(u)-\rho_i(u))-(\tilde{\rho}_{i-1}(u)-\rho_{i-1}(u)\big)\big|.
\end{split}
\end{equation}
Thus by \eqref{eq:def ex asymp con}, the lipschitz constant of $t\mapsto (\tilde{\rho}_t(u)-\rho_t(u))|_{\{t\ge i\}}$ is independent of $u$ and tends to $0$ as $i\to+\infty$.
So, only remains to estimate the second term in the right part of \eqref{eq:asumpineqconi4}.
To accomplish that, set
\[
\rho(u)=\rho((C_\infty)_{t_0},u)=\rho(h_{t_0/{t}}(\pi((C_\infty)_{t}),u)\quad\text{for every}\ u\in\esf^{n-1}.
\]
By the homogeneity of the radial function we get
\[
\rho(u)=\frac{t_0}{t}\rho(\pi((C_\infty)_{t}),u)=\frac{t_0}{t}\tilde{\rho}_{t}(u) \quad\text{for every}\ t\ge t_0.
\]
Consequently if $R$ is the inradius of
$(C_{\infty})_{t_0}$, and $u_0$ such that $\rho(u_0)=\min_{u\in\esf^{n-1}}\rho(u)$, then
\begin{equation}
\label{eq:asumpineqconi5a}
\begin{split}
\Big| \frac{tr_0/t_0-|y|}{\tilde{\rho}_t(u)-tr_0/t_0}-\frac{sr_0/t_0-|y|}{\tilde{\rho}_s(u)-sr_0/t_0}\Big|
&\le\Big| \frac{tr_0/t_0-|y|}{t/t_0\,\tilde{\rho}(u)-tr_0/t_0}-\frac{sr_0/t_0-|y|}{s/t_0\,\tilde{\rho}(u)-sr_0/t_0}\Big|
\\
&\le\frac{|y|t_0}{{\rho}(u)-r_0}\big|\frac{1}{t}-\frac{1}{s}\big|
\\ &\le\frac{Rt_0}{{\rho}(u_0)-r_0}\big|\frac{1}{t}-\frac{1}{s}\big|
\\ &\le\frac{Rt_0}{{\rho}(u_0)-r_0}\,\frac{1}{i^2}|t-s|
\end{split}
\end{equation}
Thus, the lipschitz constant of
\[
t\mapsto \frac{tr_0/t_0-|y|}{\tilde{\rho}_t(u)-tr_0/t_0}\Big|_{\{t\ge i\}}
\]
is independent of $u$ and tends to $0$ as $i\to+\infty$.

By the above discussion and \eqref{eq:asumpineqconi4}, there exists $\ell_i$ for every $i\in\nn$ such that $\ell_i\to 0$, and
\begin{equation}
\label{eq:estimation f_t(y)-f_s(y) final2}
|f_t(y)-f_s(y)|\le\ell_i\,|t-s|.
\end{equation}
From \eqref{eq:fxt2}, \eqref{eq:eksis f t-f s < Lip cyl}, \eqref{eq:estimation f_t(y)-f_s(y) final2}, and trivial estimates, we obtain
\begin{equation}
\label{eq:eksis F(x,t)-F(y,s) telik}
|F_i(x,t)-F_i(y,s)|\le \big((1+\eps_i)^2+\ell_i^2+(1+\eps_i)\,\ell_i\big)^{1/2}\,|x-y|
\end{equation}

Now $\eps_i\to 0$ and $\ell_i\to 0$ as $i\to\infty$. Thus inequality \eqref{eq:eksis F(x,t)-F(y,s) telik} finally give us
\[
\limsup_{i\to\infty}\Lip(F_i)\le 1.
\]
Similarly we find $\limsup_{i\to\infty}\Lip(F_i^{-1})\le 1$. From the general inequality $\Lip(F_i^{-1})\Lip(F_i)\ge 1$ we finally get that $\max\{\Lip(F_i),\Lip(F_i^{-1})\}\to 1$ when $i\to\infty$ (indeed we have just proved that $d_L(C\cap \{ x_{n+1}\ge i \},C^{\infty}\cap \{ x_{n+1}\ge i \})\to 0$).

Now in case that $|y|\ge tr_0/t_0$ but $|y|\le sr_0/t_0$, we can find $t^*>0$ such that $|y|= t^*r_0/t_0$, then as $\tilde{f}_t(y)=\tilde{f}_{t^*}(y)=y$, but in the same time $\tilde{f}_{t^*}(y)$ can have the expression of (3.7) in \cite[Thm.~3.4]{rv1} then after a triangle inequality argument this case is reduced to the previous one.
%
\end{proof}

\begin{proposition}
\label{prp: exist isop reg conical}
Let $C$ be a regular conically bounded convex body. Then isoperimetric regions exist in $C$ for all volumes.
\end{proposition}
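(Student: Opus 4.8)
The strategy is to combine the concentration--compactness splitting of Proposition~\ref{prp:spliting} with the asymptotic lower bound of Proposition~\ref{prp:asimpineqconi} in order to rule out loss of mass to infinity. Fix $v>0$ and apply Proposition~\ref{prp:spliting}: there is a finite perimeter set $E\subset C$ with $\vol E=v_1\le v$ and $P_C(E)=I_C(v_1)$, and a diverging sequence $\{E_i\}$ with $\vol{E_i}\to v_2$, $v_1+v_2=v$, and $I_C(v)=P_C(E)+\lim_i P_C(E_i)$; by Proposition~\ref{prp:isopbound}, $E$ is bounded. If $v_2=0$, then $\vol E=v$ and $P_C(E)=I_C(v)$, so $E$ is an isoperimetric region and we are done. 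If $v_1=0$, then $P_C(E)=0$, and by Proposition~\ref{prp:asimpineqconi} (applied to the diverging sequence, whose volumes now tend to $v$) together with Remark~\ref{rem:half-plane} we get $I_H(v)\le\lim_i P_C(E_i)=I_C(v)\le I_H(v)$, hence $I_C(v)=I_H(v)$; since $\ptl C$ is smooth, for every $p\in\ptl C$ the tangent cone $C_p$ is a half-space, so Proposition~\ref{prp:ICleICmin} gives $P_C(B_C(p,\sigma))\le I_{C_p}(v)=I_H(v)=I_C(v)$ for the radius $\sigma$ with $\vol{B_C(p,\sigma)}=v$, whence $B_C(p,\sigma)$ is an isoperimetric region and we are again done. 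It thus remains to treat the case $v_1>0$ and $v_2>0$.

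Assume $v_1,v_2>0$. Proposition~\ref{prp:asimpineqconi} gives $\lim_i P_C(E_i)\ge I_H(v_2)$, so
\[
I_C(v)\ge I_C(v_1)+I_H(v_2),
\]
and it suffices to construct a set $F\subset C$ with $\vol F=v$ and $P_C(F)<I_C(v_1)+I_H(v_2)$. The mechanism is that the volume $v_2$ can be glued onto the bulk $E$ more cheaply than as an isolated half-ball. Since $E$ is a bounded nonempty isoperimetric region, by the remark following Proposition~\ref{prp:ICleICmin} it touches $\ptl C$, and by Lemma~\ref{lem:n-7}(v) the regular part of $\ptl_C E$ contains a point $p$ where it meets $\ptl C$ orthogonally. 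Put $F=E\cup(B(p,\rho)\cap C)$, with $\rho$ chosen so that $\vol F=v$. Inside $B(p,\rho)$ the set $F$ fills all of $C$, so
\[
P_C(F)=P_C(E)-H^n\big(\ptl_C E\cap B(p,\rho)\big)+H^n\big(\ptl B(p,\rho)\cap\intt C\setminus\overline E\big),
\]
and comparison with the Euclidean model in which $\ptl C$ and $\ptl_C E$ are flat and orthogonal --- which differs from the true configuration by curvature terms of lower order in $\rho$ --- shows that the right-hand side equals $I_C(v_1)+(c_n+o_\rho(1))\,v_2^{n/(n+1)}$, where $c_n$ is the dimensional constant obtained by removing a half-disc of a hyperplane and adding a quarter-sphere of the same radius; a direct computation with the Euclidean volumes $\omega_n,\omega_{n+1}$ gives $c_n<I_H(1)$ for every $n$. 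Hence $P_C(F)<I_C(v_1)+I_H(v_2)$ whenever the curvature corrections are negligible, which is the case when $v_2$ is small compared with $v_1$; in particular small $v_2$ is excluded. For the complementary range --- $v_2$ a definite fraction of $v$, forcing $v$ large --- one argues instead from Theorem~\ref{thm:optimaconi}: by Lemma~\ref{lem:C_infty=C^infty} the exterior cone $C^\infty$ coincides with $C_\infty$ up to translation and, having bounded slices, is a proper cone, so $I_{C_\infty}(1)<I_H(1)$ and therefore $I_{C_\infty}(w)<I_H(w)$ for all $w>0$ by \eqref{eq:isopkon}; then $I_C(v)=I_{C_\infty}(v)(1+o(1))$ while $I_C(v_1)+I_H(v_2)\ge I_{C_\infty}(v_1)+I_H(v_2)\ge I_{C_\infty}(v)\big(1+(I_H(1)/I_{C_\infty}(1)-1)(v_2/v)^{n/(n+1)}\big)$ by the subadditivity of $w\mapsto w^{n/(n+1)}$, which yields $I_C(v)<I_C(v_1)+I_H(v_2)$ for $v$ large, the required contradiction.

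The main obstacle is precisely to make these two estimates cover \emph{every} admissible pair $(v_1,v_2)$ with $v_2>0$. The local ``glue a ball-chunk'' bound controls the regime $v_2\ll v_1$ --- using that when $v_1$ is large the minimizer $E$ has a boundary contact point at which $\ptl C$ has curvature $O(v_1^{-1/(n+1)})$, so that the corrections are $o(v_2^{n/(n+1)})$ --- while the comparison with the model cone $C^\infty$ controls the large-volume regime; the intermediate cases must be patched using the continuity of $I_C$ (Lemma~\ref{lem:contprofconi}), the mean-curvature estimate of Lemma~\ref{lem:boundmeancurv}, and the regularity of $\ptl C$ and of $\ptl C^\infty\setminus\{0\}$, along the lines of \cite{rv1} and \cite{rv3}. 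Once $v_2>0$ has been excluded, the set $E$ furnished by the splitting satisfies $\vol E=v$ and $P_C(E)=I_C(v)$, which is the assertion; equivalently, the minimizing sequence then stays inside some fixed ball $\clb_C(0,R)$, and one concludes by the standard compactness and lower semicontinuity of relative perimeter on the compact convex body $\clb_C(0,R)$ together with Lemma~\ref{lem:niceapprox}.
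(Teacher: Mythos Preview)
Your argument has a genuine gap: you yourself flag that the two regimes (small $v_2$ via local gluing, and large $v$ via comparison with $C_\infty$) do not cover all admissible $(v_1,v_2)$, and the sentence ``the intermediate cases must be patched \dots\ along the lines of \cite{rv1} and \cite{rv3}'' is not a proof. Moreover, your patching invokes Lemma~\ref{lem:boundmeancurv} and curvature-location estimates for large isoperimetric regions, both of which presuppose the \emph{existence} of isoperimetric regions --- the very statement you are proving --- so the reasoning is circular. The local-gluing claim (that the quarter-sphere constant $c_n$ is strictly below $I_H(1)$, and that curvature corrections are $o(v_2^{n/(n+1)})$ uniformly in the relevant range) is also asserted rather than proved.

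You are working much harder than necessary because you try to \emph{exclude} $v_2>0$. The paper does not: it simply accepts the split $I_C(v)=P_C(E)+\lim_i P_C(E_i)$, uses Proposition~\ref{prp:asimpineqconi} to bound $\lim_i P_C(E_i)\ge I_H(v_2)$, and then \emph{reinserts} the escaped volume as an intrinsic ball $B\subset C$ of volume $v_2$, disjoint from the bounded set $E$, with $P_C(B)\le I_H(v_2)$ by Proposition~\ref{prp:ICleICmin}. One line then gives
\[
I_C(v)=P_C(E)+\lim_i P_C(E_i)\ge P_C(E)+I_H(v_2)\ge P_C(E)+P_C(B)=P_C(E\cup B),
\]
so $E\cup B$ is isoperimetric of volume $v$. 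You actually carried out exactly this step in your $v_1=0$ case; the same trick works verbatim when $v_1>0$, and no strict inequality, gluing, or asymptotic regime-splitting is needed. (Ruling out $v_2>0$ is done \emph{afterwards}, in Corollary~\ref{cor:minimizing sequences}, once concavity of $Y_C$ --- hence connectedness of minimizers --- is available from Proposition~\ref{prp: I C is concave conical}.)
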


\begin{proof}
Fix $v>0$. By Proposition~\ref{prp:spliting}, there exists $E\subset C$ (possibly empty) such that $\vol{E}=v_1$, $\pp_C(E)=I_C(v_1)$, and a diverging sequence $\{E_i\}_{i\in\nn}$ of finite perimeter sets such that $\vol{E_i}\to v_2=v-v_1$; moreover
\begin{equation}
\label{eq:pp_C(E)+lim pp_C(E_i)_}
I_C(v)=\pp_C(E)+\lim_{i\to \infty}\pp_C(E_i)
\end{equation}

Assume now that $v_2>0$. From Proposition~\ref{prp:asimpineqconi} we get $\lim \pp_C(E_i)\ge I_H(v_2)$. Now by Proposition~\ref{prp:isopbound}, the set $E$ is bounded and by Proposition~\ref{prp:ICleICmin} we can find an intrinsic ball $B\subset C$ with volume $v_2$ such that $E\cap B=\emptyset$ and $\pp_C(B)\le I_H(v_2)$. Then \eqref{eq:pp_C(E)+lim pp_C(E_i)_} gives
\begin{equation}
I_C(v)=\pp_C(E)+\lim_{i\to \infty}\pp_C(E_i)\ge \pp_C(E)+I_H(v_2)\ge \pp_C(E)+\pp_C(B).
\end{equation}
Thus $E\cup B$ is an isoperimetric region with volume $v$. 
\end{proof}

\begin{proposition}
\label{prp: I C is concave conical}
Let $C\subset\rr^{n+1}$ be a conically bounded convex set. Then $I_C,Y_C$ are positive concave functions, and so they are non-decreasing. Consequently, every isoperimetric region in $C$ is connected.
\end{proposition}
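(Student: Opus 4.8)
I would prove the statement in the order: positivity of $I_C$ and $Y_C$; concavity of $Y_C$; concavity of $I_C$; monotonicity; connectedness. For positivity, since $C$ is conically bounded its asymptotic cone is non-degenerate (Lemma~\ref{lem:C_infty=C^infty}), so Theorem~\ref{thm:optimaconi} and \eqref{eq:isopkon} give $I_C(v)\ge I_{C_\infty}(v)=I_{C_\infty}(1)\,v^{n/(n+1)}>0$ for every $v>0$, hence $Y_C=I_C^{(n+1)/n}>0$; and $I_C(v)\le I_H(1)\,v^{n/(n+1)}$ by Remark~\ref{rem:half-plane}, so both profiles are finite and extend continuously by $I_C(0)=Y_C(0)=0$.

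The main work is the concavity of $Y_C$, which I would obtain by the second-variation/deformation argument used for bounded convex bodies in \cite{rv1}. Fix $v_0>0$. By Proposition~\ref{prp: exist isop reg conical} there is an isoperimetric region $E$ with $|E|=v_0$, and since $\ptl C$ is smooth, Lemma~\ref{lem:n-7} gives that the regular part $S$ of $\ptl_C E$ is a smooth embedded constant-mean-curvature hypersurface meeting $\ptl C$ orthogonally; as $P_C(E)=I_C(v_0)>0$, $S$ is non-empty. Flowing $E$ by a vector field supported in the regular part of $\ptl_C E$ and tangent to $\ptl C$, and reparametrizing by volume, one produces competitors $E_v$ with $|E_v|=v$ for $v$ near $v_0$, $E_{v_0}=E$, for which $p(v):=P_C(E_v)$ is smooth, $p\ge I_C$, $p(v_0)=I_C(v_0)$, and
\[
p'(v_0)=H_0,\qquad p(v_0)\,p''(v_0)+\tfrac{1}{n}\,p'(v_0)^2\le 0,
\]
$H_0$ being the (normalized) mean curvature of $S$; the second inequality uses that $\rr^{n+1}$ is Ricci-flat, that the second fundamental form of the convex hypersurface $\ptl C$ with respect to the inner normal is non-negative, and that $|\II_S|^2\ge H_0^2/n$. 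Thus $q:=p^{(n+1)/n}$ is a smooth upper barrier for $Y_C$ at $v_0$ with $q''(v_0)\le 0$, so
\[
Y_C(v_0+h)+Y_C(v_0-h)-2Y_C(v_0)\le q(v_0+h)+q(v_0-h)-2q(v_0)=q''(v_0)h^2+o(h^2)
\]
for small $h$; hence the upper second difference quotient of $Y_C$ at $v_0$ is $\le 0$. Since $I_C$, and therefore $Y_C$, is continuous by Lemma~\ref{lem:contprofconi}, and such a barrier exists at every $v_0>0$, it follows that $Y_C$ is concave on $(0,\infty)$ (a continuous function whose upper second difference quotient is everywhere non-positive is concave).

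The remaining assertions follow formally. Because $I_C=Y_C^{\,n/(n+1)}$ and $t\mapsto t^{n/(n+1)}$ is concave and non-decreasing on $[0,\infty)$, $I_C$ is concave. A positive concave function on $(0,\infty)$ is non-decreasing: a chord with negative slope would force the function to $-\infty$; so $I_C$ and $Y_C$ are non-decreasing. Finally, concavity of $Y_C$ with $Y_C(0)=0$ makes $Y_C$ subadditive, so for $v_1,v_2>0$, using the strict subadditivity of $t\mapsto t^{n/(n+1)}$ and $Y_C>0$,
\[
I_C(v_1+v_2)\le\big(Y_C(v_1)+Y_C(v_2)\big)^{n/(n+1)}<Y_C(v_1)^{n/(n+1)}+Y_C(v_2)^{n/(n+1)}=I_C(v_1)+I_C(v_2);
\]
if an isoperimetric region $E$ of volume $v$ were disconnected it would split into pieces $E_1,E_2$ at positive distance with $|E_i|=v_i>0$, whence $I_C(v)=P_C(E)=P_C(E_1)+P_C(E_2)\ge I_C(v_1)+I_C(v_2)>I_C(v_1+v_2)=I_C(v)$, a contradiction. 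This is the implication recorded in \cite{MR2008339}.

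The hard part is the second-variation step: constructing the volume-varying deformation so that it respects the (small) singular set of $\ptl_C E$ and the orthogonality along $\ptl C$, and deducing the differential inequality $p\,p''+\tfrac{1}{n}(p')^2\le 0$ from the first and second variation formulas, the convexity of $\ptl C$, and the Ricci-flatness of $\rr^{n+1}$. This is exactly the analysis of \cite{rv1} for bounded convex bodies; since isoperimetric regions in $C$ are bounded (Proposition~\ref{prp:isopbound}) and the deformation is local, the unboundedness of $C$ introduces nothing new.
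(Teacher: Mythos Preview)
Your proof is correct and follows essentially the same route as the paper: existence of isoperimetric regions for all volumes (Proposition~\ref{prp: exist isop reg conical}), a second-variation/upper-barrier argument to show that the upper second derivative of $Y_C$ is non-positive, continuity of $I_C$ (Lemma~\ref{lem:contprofconi}) to upgrade this to concavity, concavity of $I_C$ as a composition, and connectedness from the concavity of $Y_C$. The only cosmetic differences are that the paper cites \cite[Thm.~3.2]{bay-rosal} for the second-variation step where you cite \cite{rv1}, and that you spell out explicitly the positivity (via $I_C\ge I_{C_\infty}$), the monotonicity, and the strict-subadditivity argument for connectedness, whereas the paper simply refers to \cite[Thm.~4.6]{rv1}.
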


\begin{proof}
By \ref{prp: exist isop reg conical} isoperimetric regions exist for all volumes thus we can argue as in \cite[Thm.~3.2]{bay-rosal} to conclude that the upper second derivative of $Y_C$ is non-positive, where combining with the fact that $Y_C$ is continuous \ref{lem:contprofconi}, we deduce that $Y_C$ is concave. And so is $I_C$ as a composition of non-negative concave functions.

The connectedness of the isoperimetric regions is an implication of the concavity of $Y_C$,  \cite[Thm.~4.6]{rv1}.
\end{proof}

\begin{corollary}
\label{cor:minimizing sequences}
Let $C\subset\rr^{n+1}$ be a regular conically bounded convex body. Given any $v>0$, any minimizing sequence for volume $v$ converges to an isoperimetric region.
\end{corollary}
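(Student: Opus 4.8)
The plan is to combine the splitting result (Proposition~\ref{prp:spliting}) with the existence statement and its proof (Proposition~\ref{prp: exist isop reg conical}), showing that the diverging part of any minimizing sequence contributes no volume. Fix $v>0$ and let $\{F_i\}_{i\in\nn}$ be a minimizing sequence for volume $v$, so $\vol{F_i}=v$ and $\pp_C(F_i)\to I_C(v)$. By Lemma~\ref{lem:I_C=inf I_B r} we may assume each $F_i$ is bounded. Applying the standard concentration-compactness / splitting machinery behind Proposition~\ref{prp:spliting} to this particular sequence, after passing to a subsequence we obtain a (possibly empty) finite perimeter set $E\subset C$ with $\vol{E}=v_1\le v$ which is the ``converging part'' of the $F_i$, together with a ``diverging part'' carrying volume $v_2=v-v_1$, and the decomposition $I_C(v)=\lim_i\pp_C(F_i)=\pp_C(E)+\lim_i\pp_C(E_i)$ where $\{E_i\}$ is a diverging sequence with $\vol{E_i}\to v_2$. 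Moreover $\pp_C(E)\ge I_C(v_1)$ trivially.

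First I would show $v_2=0$. Suppose $v_2>0$. By Proposition~\ref{prp:asimpineqconi}, $\lim_i\pp_C(E_i)\ge I_H(v_2)$, hence $I_C(v)\ge \pp_C(E)+I_H(v_2)\ge I_C(v_1)+I_H(v_2)$. On the other hand, exactly as in the proof of Proposition~\ref{prp: exist isop reg conical}, $E$ (or rather an isoperimetric region of volume $v_1$) together with a disjoint intrinsic ball $B\subset C$ of volume $v_2$ with $\pp_C(B)\le I_H(v_2)$ yields a competitor showing $I_C(v)\le I_C(v_1)+I_H(v_2)$; thus equality holds throughout. But then $E\cup B$ is an isoperimetric region of volume $v$ which, by Proposition~\ref{prp: I C is concave conical}, must be connected — contradicting that $B$ is a round ball disjoint from $E$, unless $v_1=0$. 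If $v_1=0$, then $I_C(v)=\lim_i\pp_C(F_i)=I_H(v)$; but since $C$ is a regular conically bounded convex body, $I_C\le I_{C_p}\le I_H$ by Proposition~\ref{prp:ICleICmin} and Remark~\ref{rem:half-plane}, and the strict inequality $I_C(v)<I_H(v)$ for all $v$ follows because an intrinsic ball centered at a smooth boundary point of $C$ has strictly less perimeter than the corresponding half-space ball (the tangent cone $C_p$ at a smooth point is a half-space, and near a vertex of the exterior cone the inequality is strict). This contradiction forces $v_1=v$, i.e.\ $v_2=0$.

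With $v_2=0$ we have $\vol{E}=v$ and $I_C(v)=\pp_C(E)$, so $E$ is an isoperimetric region. It remains to upgrade ``splitting convergence'' to genuine convergence of $\{F_i\}$ to $E$: since no volume escapes to infinity, the $F_i$ are, up to a subsequence, uniformly bounded (any fixed proportion of volume concentrates in a fixed ball by the argument of Lemma~\ref{lem:contprofconi}, and iterating with the doubling property Lemma~\ref{lem:doubling} confines all the volume), and then the lower semicontinuity of perimeter together with $\pp_C(F_i)\to\pp_C(E)=I_C(v)$ gives $L^1_{loc}$ convergence $F_i\to E$ — in fact $L^1$ convergence, since volumes are preserved. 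Finally, a priori a subsequence was extracted, but because every subsequence of $\{F_i\}$ is again minimizing and hence has a further subsequence converging to \emph{some} isoperimetric region, and all isoperimetric regions of volume $v$ have the same perimeter $I_C(v)$, the standard subsequence argument promotes this to convergence of the full sequence (to an isoperimetric region, not necessarily the same one if isoperimetric regions are non-unique).

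The main obstacle I anticipate is the last step of ruling out $v_1=0$ cleanly: one needs the \emph{strict} inequality $I_C(v)<I_H(v)$ for conically bounded regular bodies for every $v>0$, which is where regularity of $C$ and of its exterior asymptotic cone is genuinely used; the rest is an essentially routine assembly of the already-established propositions.
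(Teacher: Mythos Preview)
Your approach is exactly the paper's, only far more detailed: the paper's entire proof is two sentences—run the construction of Proposition~\ref{prp: exist isop reg conical} under the contradiction hypothesis to obtain an isoperimetric region ``consisting of two components $E$ and $B$,'' then invoke connectedness (Proposition~\ref{prp: I C is concave conical}). You have unpacked this and, in doing so, correctly isolated the subcase $v_1=0$, which the paper's terse proof passes over in silence: when $E$ is empty the construction yields only the single ball $B$, and there is no disconnected region to contradict anything.

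Your treatment of that subcase, however, is not rigorous as written. You correctly reduce it to the strict inequality $I_C(v)<I_H(v)$, but the justification is muddled: at a smooth boundary point $p$ the tangent cone $C_p$ \emph{is} a half-space, so Proposition~\ref{prp:ICleICmin} gives only $I_C\le I_H$, and the vertex of the exterior cone lies outside $C$, so you cannot center an intrinsic ball there. What is actually needed is a strict version of Proposition~\ref{prp:ICleICmin} at a point where $\ptl C$ is genuinely curved (such a point exists, since the horizontal slices $C_t$ are compact and hence $\ptl C$ cannot be flat everywhere); this gives $I_C(w)<I_H(w)$ for arbitrarily small $w$, and then concavity of $Y_C$ (which makes $w\mapsto Y_C(w)/w$ non-increasing) together with $Y_C\le Y_H$ and the linearity of $Y_H$ propagates strict inequality to all $w>0$. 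You were right to flag this as the main obstacle; the paper simply does not address it.

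Your final paragraph, on upgrading to full-sequence $L^1$ convergence, goes beyond what the paper states. The uniform-boundedness claim there is loose—$v_2=0$ does not by itself make the $F_i$ uniformly bounded—but it is also unnecessary: once $v_2=0$, the $L^1_{\mathrm{loc}}$ limit $E$ already has the full volume $v$, so $|F_i\triangle E|\to 0$ follows directly from $|F_i|=|E|$ and $F_i\to E$ in $L^1_{\mathrm{loc}}$.
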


\begin{proof}
We reason by contradiction as in the proof of Proposition~\ref{prp: exist isop reg conical}. Then we find an isoperimetric region in $C$ consisting of two components $E$ and $B$, a contradiction to Proposition~\ref{prp: I C is concave conical}.
\end{proof}

As a consequence we have the two following lemmata, \cite{rv1}
\begin{lemma}
\label{lem:I_C(v)> cv}
Let $C\subset \rr^{n+1}$ be a regular conically bounded convex body, and $0<v_0<\vol{C}$. Then
\begin{equation}
\label{eq:I_C(v)> cv.}
I_C(v)\ge \frac{I_C(v_0)}{v_0^{n/(n+1)}}\,v^{n/(n+1)},
\end{equation}
for all $0< v\le v_0$. 
\end{lemma}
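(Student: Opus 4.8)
The inequality to prove, $I_C(v)\ge \frac{I_C(v_0)}{v_0^{n/(n+1)}}\,v^{n/(n+1)}$ for $0<v\le v_0$, is exactly the statement that the function $v\mapsto I_C(v)/v^{n/(n+1)}$ is non-increasing on $(0,v_0]$, equivalently that $v\mapsto I_C(v)^{(n+1)/n}/v = Y_C(v)/v$ is non-increasing. I would deduce this directly from the concavity of $Y_C$ established in Proposition~\ref{prp: I C is concave conical}, together with the behaviour of $Y_C$ near $v=0$. The plan is as follows.

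\emph{Step 1: $Y_C(0^+)=0$.} First I would observe that $Y_C$ extends continuously to $0$ with value $0$. This follows from the one-sided estimate \eqref{eq:right} in the proof of Lemma~\ref{lem:contprofconi} (or from Remark~\ref{rem:half-plane} combined with \eqref{eq:isopkon}): for small $v$ one has $I_C(v)\le I_H(v)=I_H(1)v^{n/(n+1)}$, so $Y_C(v)\le I_H(1)^{(n+1)/n}v\to 0$ as $v\to 0^+$. Since $Y_C$ is concave on $(0,v_0]$ (Proposition~\ref{prp: I C is concave conical}) and continuous (Lemma~\ref{lem:contprofconi}), it extends to a concave function on $[0,v_0]$ with $Y_C(0)=0$.

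\emph{Step 2: Concavity plus $Y_C(0)=0$ forces superadditivity of the chord.} For a concave function $g$ on $[0,v_0]$ with $g(0)=0$, the slope $g(v)/v$ of the chord from the origin is non-increasing in $v$: indeed, for $0<v\le v_0$, writing $v=\frac{v}{v_0}\,v_0+\bigl(1-\frac{v}{v_0}\bigr)\cdot 0$ and applying concavity gives $g(v)\ge \frac{v}{v_0}\,g(v_0)+\bigl(1-\frac{v}{v_0}\bigr)g(0)=\frac{v}{v_0}\,g(v_0)$, hence $g(v)/v\ge g(v_0)/v_0$. Applying this with $g=Y_C$ yields $Y_C(v)/v\ge Y_C(v_0)/v_0$ for all $0<v\le v_0$.

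\emph{Step 3: Unwind.} Raising the inequality $Y_C(v)\ge \frac{v}{v_0}Y_C(v_0)$ to the power $n/(n+1)$ and recalling $Y_C=I_C^{(n+1)/n}$ gives $I_C(v)\ge \bigl(\tfrac{v}{v_0}\bigr)^{n/(n+1)}I_C(v_0)=\frac{I_C(v_0)}{v_0^{n/(n+1)}}\,v^{n/(n+1)}$, which is \eqref{eq:I_C(v)> cv.}. There is essentially no obstacle here; the only point requiring slight care is justifying $Y_C(0^+)=0$, but this is immediate from the half-space comparison \eqref{eq:half-plane} and \eqref{eq:isopkon}. The statement is really just a convexity-type consequence of Proposition~\ref{prp: I C is concave conical}, packaged for later use.
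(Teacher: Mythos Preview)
Your proof is correct and follows the same approach as the paper, which simply records this lemma as a consequence of the concavity of $Y_C$ established in Proposition~\ref{prp: I C is concave conical} (referring to \cite{rv1} for details). You have filled in precisely the standard chord-slope argument that underlies that reference.
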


\begin{lemma}
\label{lem:_IC<la C}
Let $C$ be a regular conically bounded convex body, $\la\ge 1$. Then
\begin{equation}
\label{eq:ilacic}
I_{\la C}(v)\ge I_C(v),
\end{equation}
for all $0< v<\vol{C}$.
\end{lemma}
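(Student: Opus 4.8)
The plan is to combine the elementary scaling behaviour of the isoperimetric profile under homotheties with the sub-homogeneity estimate of Lemma~\ref{lem:I_C(v)> cv}. Since $C$ is an unbounded convex body we have $\vol{C}=+\infty$, so all the statements below hold for every $v>0$.

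First I would recall the scaling identity. Applying Lemma~\ref{lem:bilip} to the homothety $h_{0,\la}\colon C\to\la C$, which is bilipschitz with $\Lip(h_{0,\la})=\la$ and $\Lip(h_{0,\la}^{-1})=\la^{-1}$, one gets $\pp_{\la C}(\la E)=\la^{n}\pp_C(E)$ and $\vol{\la E}=\la^{n+1}\vol{E}$ for every finite perimeter set $E\subset C$; taking infima this yields $I_{\la C}(\la^{n+1}v)=\la^{n}I_C(v)$, which is Lemma~\ref{lem:link I laC I C} (its proof is exactly this bilipschitz computation and needs no volume restriction once $\vol{C}=\vol{\la C}=+\infty$). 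Equivalently, $I_{\la C}(v)=\la^{n}\,I_C(\la^{-(n+1)}v)$ for all $v>0$.

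Next, since $\la\ge 1$ we have $\la^{-(n+1)}v\le v$, so I would apply Lemma~\ref{lem:I_C(v)> cv} with $v_0=v$ and the volume $\la^{-(n+1)}v$ to obtain
\[
I_C(\la^{-(n+1)}v)\ge\frac{I_C(v)}{v^{n/(n+1)}}\,\big(\la^{-(n+1)}v\big)^{n/(n+1)}=\la^{-n}\,I_C(v),
\]
and substituting into the scaling identity gives $I_{\la C}(v)=\la^{n}I_C(\la^{-(n+1)}v)\ge\la^{n}\cdot\la^{-n}I_C(v)=I_C(v)$, which is \eqref{eq:ilacic}. There is essentially no hard step: the content is entirely carried by Lemma~\ref{lem:I_C(v)> cv}, i.e. by the fact that $w\mapsto I_C(w)/w^{n/(n+1)}$ is non-increasing — equivalently that the concave function $Y_C$ vanishes at the origin — which in turn rests on the existence, connectedness and concavity results established earlier in this section.
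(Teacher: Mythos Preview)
Your proof is correct and is exactly the argument the paper has in mind: the paper does not write out a proof but states Lemmas~\ref{lem:I_C(v)> cv} and~\ref{lem:_IC<la C} together as consequences of the concavity of $Y_C$ (Proposition~\ref{prp: I C is concave conical}), citing \cite{rv1}, and your derivation via the scaling identity $I_{\la C}(v)=\la^n I_C(\la^{-(n+1)}v)$ combined with the monotonicity of $w\mapsto I_C(w)/w^{n/(n+1)}$ from Lemma~\ref{lem:I_C(v)> cv} is precisely how this consequence is obtained.
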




In a similar way to \cite[p.~18]{le-ri}, given a convex body (possibly unbounded) $C$ and $E\subset C$, we define a function $h:C\times(0,+\infty)\to (0,\tfrac{1}{2})$ by
\begin{equation}
\label{eq:defh}
h(E,C,x,R)=\frac{\min\big\{ \vol{E\cap B_C(x,R)},\vol{B_C(x,R)\setminus E} \big\}}{\vol{B_C(x,R)}},
\end{equation}
for $x\in C$ and $R>0$. When $E$ and $C$ are fixed, we shall simply denote
\begin{equation}
h(x,R)=h(E,C,x,R).
\end{equation}

For future reference, we state the following result

\begin{lemma}[{\cite[Lemma~5.4]{rv1}}]
\label{lem:fv}
For any $v>0$, consider the function $f_v:[0,v]\to\rr$ defined by
\begin{equation*}
f_v(s)=s^{-n/(n+1)}\,\bigg(\bigg(\frac{v-s}{v}\bigg)^{n/(n+1)}-1\bigg).
\end{equation*}
Then there is a constant $0<c_2<1$ that does not depends on $v$ so that $f_v(s)\ge -(1/2)\,v^{-n/(n+1)}$ for all $0\le s\le c_2\,v$.
\end{lemma}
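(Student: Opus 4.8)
The plan is to eliminate the parameter $v$ by a scaling and then settle a one–variable inequality near the origin. Put $\alpha=n/(n+1)\in(0,1)$ and substitute $s=tv$ with $t\in[0,1]$; then
\[
f_v(tv)=(tv)^{-\alpha}\big((1-t)^{\alpha}-1\big)=v^{-\alpha}\,g(t),\qquad g(t):=t^{-\alpha}\big((1-t)^{\alpha}-1\big),
\]
and $g$ is independent of $v$. Interpreting $f_v(0)$ as the finite limit $\lim_{s\to0^{+}}f_v(s)=0$ (this is the only way $f_v$ is used), the asserted bound $f_v(s)\ge-\tfrac12 v^{-\alpha}$ on $[0,c_2v]$ is equivalent to $g(t)\ge-\tfrac12$ on $[0,c_2]$. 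So it suffices to exhibit some $c_2\in(0,1)$, depending only on $n$ (in particular not on $v$), with this property.

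Next I would estimate $|g|$ near $0$. On $(0,1)$ we have $(1-t)^{\alpha}<1$, hence $g(t)=-t^{-\alpha}\big(1-(1-t)^{\alpha}\big)<0$, and for $t\in(0,\tfrac12]$,
\[
1-(1-t)^{\alpha}=\int_{0}^{t}\alpha\,(1-\sigma)^{\alpha-1}\,d\sigma\le \alpha\,2^{1-\alpha}\,t\le 2t,
\]
using $(1-\sigma)^{\alpha-1}\le 2^{1-\alpha}$ for $\sigma\le\tfrac12$ and the crude bounds $\alpha<1$, $2^{1-\alpha}\le 2$. Therefore $|g(t)|\le 2t^{1-\alpha}$ for $t\in(0,\tfrac12]$, and this is $\le\tfrac12$ as soon as $t^{1-\alpha}\le\tfrac14$, i.e.\ $t\le 4^{-1/(1-\alpha)}=4^{-(n+1)}$. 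Since $4^{-(n+1)}<1$, the choice $c_2:=4^{-(n+1)}$ works and depends only on $n$, which completes the proof.

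There is no serious obstacle here: the substance of the lemma is the reduction $f_v(s)=v^{-\alpha}g(s/v)$, which makes the constant $v$–independent, together with the elementary fact that $g(t)\to0$ as $t\to0^{+}$ (faster than any fixed negative threshold); the integral estimate above is just one convenient way to quantify this. One could instead argue softly: $g$ extends continuously to $[0,1]$ with $g(0)=0$, so $\{\,g\ge-\tfrac12\,\}$ contains a neighbourhood of $0$ in $[0,1]$ and any $c_2<1$ in it works — but the explicit constant costs nothing extra and is occasionally handy downstream.
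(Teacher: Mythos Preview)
Your proof is correct. The paper does not actually prove this lemma but merely cites it from \cite[Lemma~5.4]{rv1}; your argument --- reducing to the $v$-independent function $g(t)=t^{-\alpha}\big((1-t)^{\alpha}-1\big)$ via the substitution $s=tv$, and then bounding $|g|$ near $0$ by the elementary integral estimate $1-(1-t)^{\alpha}\le 2t$ on $(0,\tfrac12]$ --- is a clean, self-contained verification that delivers the explicit constant $c_2=4^{-(n+1)}$.
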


Now we can prove the following density estimate. A similar estimate for convex bodies was obtained in \cite[Prop.~4.9]{rv1}.

\begin{proposition}
\label{prp:leon rigot lem 42 coni}
\mbox{}
Let $C\subset \rr^{n+1}$ be a regular conically bounded convex body, and $E\subset C$ an isoperimetric region of volume $0<v<|C|$.  Choose $\eps$ so that
\begin{equation}
\label{eq:epsfine}
0<\eps<\min\bigg\{{\ell_2}^{-1}v,c_2v,\frac{\ell_2^{n}}{8^{n+1}},{\ell_2}^{-1}  \bigg(\frac{c_1}{4}\bigg)^{n+1}\bigg\}  \bigg\},
\end{equation}
where $c_1=v^{-n/(n+1)}I_C(v)$ and $c_2$ is the constant in Lemma~\ref{lem:fv}.

Then, for any $x\in C$ and $R\le 1$ so that $h(x,R)\le\eps$, we get
\begin{equation}
\label{eq:hr/2=0}
h(x,R/2)=0.
\end{equation}
Moreover, in case $h(x,R)=\vol{E\cap B_C(x,R)}\vol{B_C(x,R)}^{-1}$, we get $|E\cap B_C(x,R/2)|=0$ and, in case $h(x,R)=\vol{B_C(x,R)\setminus E}\vol{B_C(x,R)}^{-1}$, we have $|B_C(x,R/2)\setminus E|=0$.
\end{proposition}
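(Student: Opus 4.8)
The plan is to follow the classical "clearing-out" argument of Leon--Rigot (as adapted in \cite[Prop.~4.9]{rv1}), replacing the volume lower density bound for convex bodies by the estimate \eqref{eq:isnqgdbl1a} valid on conically bounded convex bodies with $r_0=1$, and using the lower bound $I_C(v)\ge c_1 v^{n/(n+1)}$ for small volumes supplied by Lemma~\ref{lem:I_C(v)> cv}. Fix $x\in C$ and $R\le 1$ with $h(x,R)\le\eps$; without loss of generality assume $h(x,R)=|E\cap B_C(x,R)|\,|B_C(x,R)|^{-1}$, the other case being symmetric upon replacing $E$ by $C\setminus E$ (here one uses that $P_C$ is unchanged under complementation in $C$). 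Write $m(t)=|E\cap B_C(x,t)|$ for $t\in(0,R]$; this is non-decreasing, absolutely continuous, and by the coarea formula $m'(t)=H^n(E\cap\partial B(x,t))$ for a.e.\ $t$.

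First I would set up the comparison inequality. For a.e.\ $t\in(0,R]$ the set $E\setminus B_C(x,t)$ is an admissible competitor with volume $v-m(t)$, so by minimality of $E$ and the slicing estimate $P_C(E\setminus B_C(x,t))\le P_C(E,C\setminus\clb(x,t))+H^n(E\cap\partial B(x,t))$ (Corollary~5.5.3 in \cite{Ziemer}, as used in Lemma~\ref{lem:I_C=inf I_B r}), together with the concavity bound $I_C(v-m(t))\ge I_C(v)-(\text{something})$, one gets
\begin{equation*}
P_C(E,B_C(x,t))\le H^n(E\cap\partial B(x,t))+\big(I_C(v)-I_C(v-m(t))\big).
\end{equation*}
Next, applying the relative isoperimetric inequality \eqref{eq:isnqgdbl1} inside $\clb_C(x,t)$ (legitimate since $t\le R\le 1=r_0$), and noting that because $h(x,R)\le\eps$ with $\eps$ small the minimum in \eqref{eq:isnqgdbl1} is realized by $m(t)$ itself (this needs $m(t)\le\tfrac12|B_C(x,t)|$, which follows by monotonicity from $h(x,R)\le\eps<\tfrac12$), one obtains
\begin{equation*}
M\,m(t)^{n/(n+1)}\le I_{\clb_C(x,t)}(m(t))\le P_C(E\cap B_C(x,t))\le P_C(E,B_C(x,t))+H^n(E\cap\partial B(x,t)).
\end{equation*}
Combining the two displays and using Lemma~\ref{lem:fv} to bound $I_C(v)-I_C(v-m(t))$ from above by $\tfrac12 c_1\, m(t)\,v^{-n/(n+1)}\le \tfrac12 c_1 m(t)^{n/(n+1)}(\ell_2 t^{n+1}/\eps)^{1/(n+1)}$-type quantities — more precisely, using $I_C(v)-I_C(v-s)\le c_1 s/v^{n/(n+1)}$ isn't quite concavity; instead one uses that concavity of $Y_C=I_C^{(n+1)/n}$ gives $I_C(v)-I_C(v-s)\le (c_1/n)\,s\,v^{-n/(n+1)}$ and then $f_v(m(t))\ge -\tfrac12 v^{-n/(n+1)}$ from Lemma~\ref{lem:fv} since $m(t)\le \eps\ell_2 t^{n+1}\le\eps\ell_2\le c_2 v$ — one arrives at a differential inequality of the form $c\,m(t)^{n/(n+1)}\le m'(t)$ for a.e.\ $t\in(0,R]$, with $c>0$ depending only on $M$, $c_1$, $n$, $\ell_2$; the choice \eqref{eq:epsfine} of $\eps$ is exactly what makes the error terms absorbable into half of the isoperimetric term, leaving a clean constant $c$.

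Finally I would integrate the differential inequality. If $m(t)>0$ for all $t\in(0,R]$, then $\frac{d}{dt}m(t)^{1/(n+1)}\ge c/(n+1)>0$ for a.e.\ $t$, so $m(R)^{1/(n+1)}-m(R/2)^{1/(n+1)}\ge \frac{c}{n+1}\cdot\frac{R}{2}$; but also $m(R)\le\eps|B_C(x,R)|\le\eps\ell_2 R^{n+1}$, so $m(R)^{1/(n+1)}\le (\eps\ell_2)^{1/(n+1)}R$, and the bound $\eps<\ell_2^{-1}(c_1/4)^{n+1}$ together with $c\ge c_1$-type control forces $(\eps\ell_2)^{1/(n+1)}<\frac{c}{2(n+1)}$, a contradiction. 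Hence $m(t_0)=0$ for some $t_0\in(0,R]$, and then $m\equiv 0$ on $(0,t_0]$; a routine iteration (or directly re-running the argument on the interval $(0,R/2]$, using that $h(x,R/2)\le\eps$ still holds because $m(R/2)=0$) yields $m(R/2)=0$, i.e.\ $|E\cap B_C(x,R/2)|=0$ and $h(x,R/2)=0$. The main obstacle I expect is the careful bookkeeping in the comparison inequality: one must verify that the minimum in the relative isoperimetric inequality \eqref{eq:isnqgdbl1} is attained at $m(t)$ and not at $|B_C(x,t)|-m(t)$ for \emph{all} $t\le R$ (not just $t=R$), and that the concavity estimate for $I_C(v)-I_C(v-m(t))$ is uniform down to $t\to 0$; both hinge on the monotonicity $m(t)\le m(R)\le\eps|B_C(x,R)|$ and on the volume bounds \eqref{eq:isnqgdbl1a}, so they go through once $\eps$ is chosen as in \eqref{eq:epsfine}.
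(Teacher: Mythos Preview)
Your treatment of the first case (when $h(x,R)=|E\cap B_C(x,R)|\,|B_C(x,R)|^{-1}$) is essentially the paper's argument: set up the competitor $E\setminus B_C(x,t)$, combine the relative isoperimetric inequality \eqref{eq:isnqgdbl1} with the slicing estimate to get a differential inequality for $m(t)^{1/(n+1)}$, integrate over $[R/2,R]$, and contradict the volume bound $m(R)\le\eps\ell_2 R^{n+1}$. The bookkeeping is a bit loose (for instance, you need Lemma~\ref{lem:fv} precisely to control $I_C(v)-I_C(v-m(t))$ via the concavity of $Y_C$, not a linear bound), but the skeleton is right and matches \cite[Prop.~4.9]{rv1}, which is exactly what the paper invokes.

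The genuine gap is your claim that the second case is ``symmetric upon replacing $E$ by $C\setminus E$''. Since $C$ is an \emph{unbounded} convex body, $|C\setminus E|=\infty$, so $C\setminus E$ is not an isoperimetric region (nor even a finite-volume set), and the minimality input $P_C(E)=I_C(v)$ has no analogue for the complement. The paper does \emph{not} argue by symmetry here: in the case $h(x,R)=|B_C(x,R)\setminus E|\,|B_C(x,R)|^{-1}$ one works with $\tilde m(t)=|B_C(x,t)\setminus E|$ and the competitor $E\cup B_C(x,t)$, whose volume is $v+\tilde m(t)>v$. The comparison $I_C(v)\le I_C(v+\tilde m(t))\le P_C(E\cup B_C(x,t))$ then requires that $I_C$ be \emph{non-decreasing}, which is supplied by Proposition~\ref{prp: I C is concave conical}; and the isoperimetric lower bound for $B_C(x,t)\setminus E$ comes from Lemma~\ref{lem:I_C(v)> cv} (the global bound $I_C(w)\ge c_1 w^{n/(n+1)}$ for $w\le v$), not from the local Poincar\'e inequality. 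This is why the contradictory inequality in that case reads $c_1/4\le(\eps\ell_2)^{1/(n+1)}$, matching the last constraint in \eqref{eq:epsfine}. You should replace the symmetry shortcut by this separate argument.
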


\begin{proof}
In case $h(x,R)=\vol{E\cap B_C(x,R)}\vol{B_C(x,R)}^{-1}$ we argue as in \cite[Prop.~4.9]{rv1} to get 
\[
bR/4a\le(m(R)^{1/{(n+1)}}-m(R/2)^{1/{(n+1)}})\le m(R)^{1/{(n+1)}}\le (\eps\ell_2)^{1/(n+1)}R.
\]
This is a contradiction, since $\eps\ell_2<(b/4a)^{n+1}=I_C(v)^{n+1}/(8^{n+1} v^n)\le \ell_2^{n+1}/8^{n+1}$ by \eqref{eq:epsfine} and Proposition~\ref{prp:ICleICmin}. So the proof in case $h(x,R)=\vol{E\cap B_C(x,R)}\,(\vol{B_C(x,R))}^{-1}$ is completed.

For the remaining case, when $h(x,R)=\vol{B_C(x,R)}^{-1}\vol{B_C(x,R)\setminus E}$, using Lemma \ref{lem:I_C(v)> cv} and the fact that $I_C$ is non-decreasing by Proposition \ref{prp: I C is concave conical}, we argue as in Case~1 in Lemma~4.2 of \cite{le-ri} we get
\[
c_1/4\le (\eps\ell_2)^{1/(n+1)}
\]
This is a contradiction, since $\eps\ell_2<(c_1/4)^{n+1}$ by \eqref{eq:epsfine}.
\end{proof}

One of the consequences of Proposition~\ref{prp:leon rigot lem 42 coni} is the following lower density bound, which is usually obtained from the monotonicity formula.

\begin{corollary}[Lower density bound]
\label{cor:lwrdnbnd}
Let $C\subset \rr^{n+1}$ be a regular conically bounded convex body, and $E\subset C$ an isoperimetric region of volume $v$. Then there exists a constant $M>0$, only depending on the constant $\eps$ in \eqref{eq:epsfine}, on a Poincar\'e's constant for $r\le 1$ as in \eqref{eq:isnqgdbl1}, and on an Ahlfors constant $\ell_1$ as in \eqref{eq:isnqgdbl1a}, such that
\begin{equation}
\label{eq:lwrdnbnd}
\pp(E,B_C(x,r))\ge M r^n,
\end{equation}
for all $x\in\ptl_C E_1$ and $r\le 1$.
\end{corollary}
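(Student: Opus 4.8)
The statement is the standard passage from a "clearing-out" / density-excess decay result (Proposition~\ref{prp:leon rigot lem 42 coni}) to a lower bound on the perimeter density. The idea is to argue by contradiction: if $\pp(E,B_C(x,r))$ were too small relative to $r^n$ at a point $x\in\ptl_C E$, then the relative isoperimetric inequality inside the ball $\clb_C(x,r)$ (the Poincaré-type inequality \eqref{eq:isnqgdbl1}) would force $h(x,r)$ to be tiny, hence $\le\eps$, and then Proposition~\ref{prp:leon rigot lem 42 coni} would give $h(x,r/2)=0$, i.e.\ $B_C(x,r/2)$ is (up to null sets) either entirely inside $E$ or entirely outside $E$; either way this contradicts $x\in\ptl_C E_1$ (with $E_1$ the representative with the usual density properties, so that both $E$ and its complement have positive density at $x$ in every ball). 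So first I would fix $x\in\ptl_C E_1$ and $r\le 1$, and set $t:=\pp(E,B_C(x,r))/r^n$; the goal is to produce an explicit $M$ depending only on the listed constants such that $t<M$ is impossible.

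**Key steps.** (i) Apply the relative isoperimetric inequality on $\clb_C(x,r)$ from \eqref{eq:isnqgdbl1}: $M_0\,\min\{|E\cap B_C(x,r)|,|B_C(x,r)\setminus E|\}^{n/(n+1)}\le \pp(E,B_C(x,r))$, where $M_0$ is the Poincaré constant for radii $\le 1$. Hence $\min\{|E\cap B_C(x,r)|,|B_C(x,r)\setminus E|\}\le (t\,r^n/M_0)^{(n+1)/n}$. (ii) Use the Ahlfors lower bound $|B_C(x,r)|\ge \ell_1 r^{n+1}$ from \eqref{eq:isnqgdbl1a} to estimate $h(x,r)\le \ell_1^{-1}r^{-(n+1)}(t\,r^n/M_0)^{(n+1)/n}=\ell_1^{-1}(t/M_0)^{(n+1)/n}$, which is small when $t$ is small and, crucially, does not blow up as $r\to 0$. (iii) Choose $M$ so small that $\ell_1^{-1}(M/M_0)^{(n+1)/n}\le\eps$, with $\eps$ as in \eqref{eq:epsfine}. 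Then $t<M$ would give $h(x,r)\le\eps$, and we could invoke Proposition~\ref{prp:leon rigot lem 42 coni} to get $h(x,r/2)=0$. (iv) Conclude: $h(x,r/2)=0$ means $|E\cap B_C(x,r/2)|=0$ or $|B_C(x,r/2)\setminus E|=0$, contradicting the fact that, at a point of the reduced-type boundary $\ptl_C E_1$, both the set and its complement have strictly positive volume in every intrinsic ball centered at $x$ (this uses the standard choice of representative and the density estimates available from the relative perimeter minimality together with Proposition~\ref{prp:ICleICmin}). Therefore $\pp(E,B_C(x,r))\ge M\,r^n$.

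**Main obstacle.** The delicate point is step (iv): making precise the sense in which $x\in\ptl_C E_1$ forces $h(x,\rho)>0$ for all $\rho\le 1$. One needs $E$ to be a fixed good representative (e.g.\ the set of points of density neither $0$ nor $1$ for $\ptl E$, or equivalently the support of the Gauss–Green measure inside $\intt C$), so that no intrinsic ball around a topological boundary point is null for either $E$ or its complement; this is exactly why the statement is phrased with $E_1$ rather than $E$. A secondary (routine) subtlety is bookkeeping the constants: the relative isoperimetric constant $M_0$ in \eqref{eq:isnqgdbl1} is there written as $M$, the Ahlfors constant is $\ell_1$, and $\eps$ is constrained by \eqref{eq:epsfine} (which itself depends on $c_1=v^{-n/(n+1)}I_C(v)$, $c_2$, and $\ell_2$); one must simply check that the final $M$ can be taken to depend only on $\eps$, $M_0$ and $\ell_1$ as claimed. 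Both obstacles are handled exactly as in the bounded convex body case, \cite[Prop.~4.10]{rv1}, so the proof amounts to transcribing that argument with the conically bounded versions of the Poincaré and Ahlfors estimates supplied by Lemma~\ref{lemma:isnqgdblconi} restricted to $r_0=1$.
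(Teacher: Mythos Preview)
Your proposal is correct and follows essentially the same route as the paper: the paper's proof is the direct (forward) version of your contradiction argument, using that $h(x,r)>\eps$ for $x\in\ptl_C E_1$ and $r\le 1$ (the contrapositive of Proposition~\ref{prp:leon rigot lem 42 coni}) and then chaining the Poincar\'e inequality \eqref{eq:isnqgdbl1}, the definition of $h$, and the Ahlfors bound \eqref{eq:isnqgdbl1a} to obtain $\pp(E,B_C(x,r))\ge M(\ell_1\eps)^{n/(n+1)}r^n$. Your constant $M$ determined by $\ell_1^{-1}(M/M_0)^{(n+1)/n}=\eps$ coincides with the paper's, and your identification of the key point---that $h(x,r/2)=0$ is incompatible with $x\in\ptl_C E_1$---is exactly what the paper uses implicitly.
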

\begin{proof}
Let $E\subset C$ be an isoperimetric region of volume $v>1$, that exists by Proposition \ref{prp: exist isop reg conical}. The constant $\eps$ in \eqref{eq:epsfine} can be chosen independently of $v>1$ since the quantity $\inf_{v\ge 1}v^{-n/(n+1)}I_C(v)$ is uniformly bounded from below by a positive constant because of \eqref{eq:optimanondegen}. Then we have
\begin{equation*}
\begin{split}
\pp(E,B_C(x,r))&\ge M\min\{\vol{E\cap B_C(x,r)}, \vol{B_C(x,r) \setminus E}\}^{n/{(n+1)}}
\\
&=M\,(\vol{B_C(x,r)}\,h(x,r))^{n/(n+1)}\ge M(\vol{B_C(x,r)}\,\eps)^{n/(n+1)}
\\
&\ge M\,(\ell_1\eps)^{n/(n+1)}\,r^n,
\end{split}
\end{equation*}
as claimed.
\end{proof}


So we have our convergence result

\begin{theorem}
\label{thm:convrescnoi}
Let $C\subset\rr^{n+1}$ be a regular conically bounded convex body. Then a rescaling of a sequence of isoperimetric regions of volumes approaching infinity converges in Hausdorff distance to a geodesic ball centered at the vertex in the asymptotic cone. The same convergence result holds for their free boundaries.
\end{theorem}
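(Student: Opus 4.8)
The plan is to combine the uniform density estimates of Proposition~\ref{prp:leon rigot lem 42 coni} and Corollary~\ref{cor:lwrdnbnd} with the asymptotic profile estimate \eqref{eq:optimanondegen} to force, after rescaling, convergence to a geodesic ball in the exterior asymptotic cone $C^\infty$. Let $\{E_k\}_{k\in\nn}$ be isoperimetric regions in $C$ with $v_k=\vol{E_k}\to\infty$, and set $\la_k=v_k^{-1/(n+1)}$, so that $\widetilde E_k=\la_k E_k\subset \la_k C=:C_k$ has $\vol{\widetilde E_k}=1$. By Lemma~\ref{lem:link I laC I C} each $\widetilde E_k$ is isoperimetric in $C_k$. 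Since $C$ is conically bounded, the convex bodies $C_k$ converge in pointed Hausdorff distance to (a translate of) the asymptotic cone $C_\infty=C^\infty$; indeed one may translate so the vertex of $C^\infty$ stays fixed, using that the Hausdorff distance of the slices tends to zero. By Theorem~\ref{thm:optimaconi} and \eqref{eq:optimanondegen}, $\pp_{C_k}(\widetilde E_k)=\la_k^n\,I_C(v_k)=I_C(v_k)/v_k^{n/(n+1)}\to I_{C^\infty}(1)$, so $\{\widetilde E_k\}$ is, in the limit, a minimizing configuration for volume $1$ in $C^\infty$.

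First I would establish compactness: by Proposition~\ref{prp:isopbound} each $E_k$ is bounded, but we need uniform boundedness of $\widetilde E_k$. This follows from the clearing-out Proposition~\ref{prp:leon rigot lem 42 coni}, which applies with constants uniform in $v_k\ge 1$ (exactly the point checked in the proof of Corollary~\ref{cor:lwrdnbnd}): once $h(x,R)\le\eps$ at some scale $R\le 1$, the smaller concentric ball lies entirely outside (or inside) $\widetilde E_k$, so the set cannot have isolated thin tentacles, and combined with the fixed volume $\vol{\widetilde E_k}=1$ and the Ahlfors bound \eqref{eq:isnqgdbl1a} this confines $\widetilde E_k$ to a ball of radius independent of $k$. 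Then, passing to a subsequence, $\widetilde E_k$ converges in $L^1$ and, by Lemma~\ref{lem:niceapprox} together with lower semicontinuity of perimeter under the $C_k\to C^\infty$ convergence, to a set $E_\infty\subset C^\infty$ with $\vol{E_\infty}=1$ and $\pp_{C^\infty}(E_\infty)\le I_{C^\infty}(1)$; hence $E_\infty$ is isoperimetric in $C^\infty$. By the Lions--Pacella/Ritoré--Rosales/Figalli--Indrei characterization (cited before \eqref{eq:isopkon}), $E_\infty$ is a geodesic ball centered at the vertex of $C^\infty$.

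Next I would upgrade $L^1$-convergence to Hausdorff convergence of the sets and of their free boundaries. The lower density bound of Corollary~\ref{cor:lwrdnbnd}, $\pp(\widetilde E_k,B_{C_k}(x,r))\ge M r^n$ for $x\in\ptl_{C_k}\widetilde E_k$ and $r\le 1$ with $M$ uniform in $k$, together with the analogous uniform upper density bound (which follows by comparison with the intrinsic ball, as the regular part of $\ptl\widetilde E_k$ has mean curvature bounded by $I_{C^\infty}(1)/(n+1)+o(1)$), rules out both vanishing and concentration of the relative perimeter measures; standard arguments (as in \cite[Prop.~4.9]{rv1} and the proof strategy indicated in the introduction via Lemma~\ref{lem:boundmeancurv}) then give that $\ptl_{C_k}\widetilde E_k$ converges in Hausdorff distance to $\ptl_{C^\infty}E_\infty$, the spherical cap, and that $\widetilde E_k\to E_\infty$ in Hausdorff distance. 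Unwinding the rescaling, this says precisely that a rescaling of $\{E_k\}$ converges in Hausdorff distance to a geodesic ball centered at the vertex in the asymptotic cone, with the same for the free boundaries.

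The main obstacle I expect is the uniform boundedness of the rescaled regions $\widetilde E_k$, i.e. making the clearing-out and density estimates genuinely uniform in $k$ as $C_k$ degenerates toward the cone $C^\infty$: one must check that the inradius lower bound of Lemma~\ref{lemma:isnqgdblconi}, the Poincaré/relative isoperimetric constant for $r\le 1$, and the Ahlfors constants $\ell_1,\ell_2$ can all be taken independent of $k$, which ultimately rests on the conically bounded hypothesis and on the uniform asymptotic profile comparison \eqref{eq:optimanondegen}. Once that uniformity is in hand, the remaining steps — $L^1$-compactness, identification of the limit via the cone characterization, and promotion to Hausdorff convergence of sets and boundaries — are routine in the spirit of \cite{rv1}.
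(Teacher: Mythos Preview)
Your overall strategy---rescale to volume $1$, use the uniform density estimates to get compactness, pass to an $L^1$ limit in $C^\infty$, identify it via the cone characterization, and upgrade to Hausdorff convergence---is exactly the paper's. But your compactness step has two genuine gaps.

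First, the diameter bound. Clearing-out plus fixed volume plus the Ahlfors bound \eqref{eq:isnqgdbl1a} do \emph{not} by themselves confine $\widetilde E_k$: a set of volume $1$ satisfying the density dichotomy could still consist of several well-separated chunks. The paper closes this by invoking the \emph{connectedness} of isoperimetric regions (Proposition~\ref{prp: I C is concave conical}) together with the uniform lower perimeter density of Corollary~\ref{cor:lwrdnbnd}: if $\diam(\widetilde E_k)$ were unbounded one could place arbitrarily many disjoint unit balls centered on $\partial_{C_k}\widetilde E_k$, each contributing perimeter at least $M$, contradicting $\pp_{C_k}(\widetilde E_k)=I_{\la_k C}(1)\le I_H(1)$. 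You never invoke connectedness, and your sentence ``the set cannot have isolated thin tentacles, and combined with the fixed volume \ldots\ this confines $\widetilde E_k$ to a ball'' is not a valid argument as written.

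Second, even once $\diam(\widetilde E_k)$ is uniformly bounded, nothing in your argument prevents the sets from \emph{drifting to infinity} along the cone direction, in which case the $L^1$ limit would be empty and the identification with the ball at the vertex would fail. The paper handles this by first transporting $\widetilde E_k$ into $C_\infty$ via bilipschitz maps $g_i$ with $\Lip(g_i),\Lip(g_i^{-1})\to 1$ (built from \cite[Thm.~3.4]{rv1}), and then observing that a divergent subsequence of $g_i(\widetilde E_i)$ in $C_\infty$ would force, via Proposition~\ref{prp: asympt ineq cone} and \eqref{eq:optimanondegen7}, the inequality $I_{C_\infty}(1)\ge I_H(1)$, i.e., $C_\infty$ a half-space---impossible since the exterior asymptotic cone has a single vertex. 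Your direct approach in the varying domains $C_k$ supplies no substitute for this step.

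Once these two points are in place, the passage from $L^1$ to Hausdorff convergence of both the sets and their free boundaries via the uniform density estimates is indeed routine and follows \cite[Theorems~5.11 and 5.13]{rv1}, as the paper does.
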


\begin{proof}
Assume $0\in\ptl C$. Let $\{E_i\}_{i\in\nn}\subset C$ be a sequence of isoperimetric regions of volumes $\vol{E_i}\to\infty$, and let $\la_i\to 0$ so that $\vol{\la_iE_i}=1$. The sets $\Om_i=\la_iE_i$ are isoperimetric regions in $\la_iC$, and they are connected by Proposition~\ref{prp: I C is concave conical}. We claim
\begin{equation}
\label{eq:claim diamto00coni}
\diam(\Om _i)\le c,\qquad\text{for all}\ i\ \text{and some}\ c>0.
\end{equation}
If claim holds, let $q\in \intt(\clb_{C_{\infty}}(0,1))$ and $B_q \subset\intt(\clb_{\infty}(0,1))$ be a Euclidean geodesic ball. Consider a solid cone $K_q$ with vertex $q$ such that $0\in \intt(K_q)$ and $K_q\cap C\cap \ptl B(0,1)=\emptyset$.  By \eqref{eq:claim diamto00coni} we get $\diam(\la_i\Om_i)\to 0$, and hence $\la_i\Om_i\to 0$ in Hausdorff distance, what implies
\[
\la_i\Om_i\subset  K_q,
\]
for large enough $i\in \nn$.

As the sequence $\la_i^2C\cap\clb(0,1)$ converges in Hausdorff distance to $C_\infty\cap \clb(0,1)$, we construct using \cite[Thm.~3.4]{rv1} a family of bilipschitz maps
\[
f_i:\la_i^2 C\cap\clb(0,1)\to C_\infty\cap \clb(0,1)
\]
so that $f_i$ is the identity in $B_q$ and it is extended linearly along the segments leaving from $q$. The maps $f_i$ satisfy $\Lip(f_i),\,\Lip(f_i^{-1})\to 1$, and have the additional property
\[
\pp_{C_{\infty}}(f_i(\la_i\Om_i))=\pp_{\clb_{C_\infty}(0,1)}(f_i(\la_i\Om_i)).
\]
Then  $g_i=\la_if_i\la_i^{-1}$, defined from $\la_iC\cap \clb(0,\la_i^{-1})$ to $C_{\infty}\cap  \clb(0,\la_i^{-1})$ satisfy the same properties $\Lip(g_i)$, $\Lip(g_i^{-1})\to 1$ and $\pp_{C_{\infty}}(g_i(\Om_i))=\pp_{\clb_{C_\infty}(0,\la_i^{-1})}(g_i(\Om_i))$. From Lemma \ref{lem:bilip} we get
\begin{equation}
\label{eq:bigvolconi}
\begin{split}
\lim_{i\to \infty} \diam (\Om_i)&=\lim_{i\to \infty} \diam (g_i(\Om_i)),
\\
1=\lim_{i\to \infty} \vol{\Om_i}&=\lim_{i\to \infty} \vol{g_i(\Om_i)},
\\
\liminf_{i\to \infty} \pp_{\la_iC}(\Om_i)&= \liminf_{i\to \infty} \pp_{C_{\infty}}(g_i(\Om_i)).
\end{split}
\end{equation}
Consequently, by \eqref{eq:claim diamto00coni}, the sets $g_i(\Om_i)$ have uniformly bounded diameter. If the sequence of sets $\{g_i(\Om_i)\}_{i\in\nn}$ has a divergent subsequence, then \eqref{eq:optimanondegen7}, \eqref{eq:bigvolconi}, and Proposition \ref{prp: asympt ineq cone} imply
\begin{equation}
\label{eq:bigvolcon1}
I_{C_{\infty}}(1)=\lim_{i\to \infty}I_{\la_iC}(1)=\liminf_{i\to \infty} \pp_{C_{\infty}}(g_i(\Om_i))\ge I_H(1),
\end{equation}
and from \eqref{eq:isopkon} we would get that  $C_{\infty}$ is a half-space, a contradiction. Hence the sequence $\{g_i(\Om_i)\}_{i\in\nn}$ stays bounded, and we can apply the convergence results for convex bodies to obtain $L^1$-convergence of the sets $\Om_i$ and improve, using the density estimates in Proposition~\ref{prp:leon rigot lem 42 coni}, the $L^1$-convergence to Hausdorf convergence of the sets $\Om_i$ and their boundaries \cite[Theorems~5.11 and 5.13]{rv1}.  

So it only remains to prove \eqref{eq:claim diamto00coni} to conclude the proof.
Since $(\la_iC)_{\infty}=C_{\infty}$ we can choose, using  Lemma~\ref{lemma:isnqgdblconi}, a uniform Poincar\'e's constant for $r\le 1$, and a uniform Ahlfors constant $\ell_1$ for all $\la_iC$. Further, since $I_{\la_iC}\ge I_{C_{\infty}}$, the constant $\eps$ in \eqref{eq:epsfine} can be chosen uniformly for all $\la_iC$ as well. Consequently  a lower density bound, as in Corollary \ref{cor:lwrdnbnd}, holds for all $\Om_i$ with a uniform constant. Since the sets $\Om_i$ are connected by Proposition~\ref{prp: I C is concave conical}, we conclude that $\diam(\Om_i)$ are uniformly bounded, since otherwise \eqref{eq:lwrdnbnd} would imply that $\pp_{\la_i C}(\la_i E_i)$ goes to infinity. This way we obtain a contradiction, since by \eqref{eq:half-plane}, we get $\pp_{\la_i C}(\la_i E_i)=I_{\la_i C}(1)\le I_{H}(1)$ for all $i$. 
\end{proof}

Since we are assuming smoothness of the boundaries of both the conically bounded set $C$ and of its asymptotic cone $C_\infty$ (out of the vertex), we can use density estimates for varifolds to improve the convergence. In particular, the mean curvatures of the boundaries of the isoperimetric regions satisfy a uniform estimate

\begin{lemma}
\label{lem:boundmeancurv}
Let $C\subset\rr^{n+1}$ be a regular conically bounded convex body, and $\{E_i\}_{i\in\nn}$ a sequence of isoperimetric regions of volumes $v_i\to\infty$. Let $H_i$ be the constant mean curvature of the regular part of the boundary of $E_i$. Then $H_iv_i^{1/(n+1)}$ is bounded.
\end{lemma}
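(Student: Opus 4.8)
The plan is to bound $H_i$ by a one-sided derivative of the isoperimetric profile $I_C$; these derivatives exist everywhere because $I_C$ is concave by Proposition~\ref{prp: I C is concave conical}. First I would record the standard first-variation inequality relating $H_i$ to $I_C$. By Lemma~\ref{lem:n-7} the regular part $S\subset\ptl E_i\cap\intt(C)$ is an embedded hypersurface of constant mean curvature $H_i$, and it is nonempty since $\pp_C(E_i)=I_C(v_i)>0$. Fix $p\in S$, take a vector field with compact support near $p$ in $\intt(C)$ whose flow deforms $E_i$ through sets $E_{i,t}$, normalized so that the first variation of volume is $1$. The first variations of volume and perimeter give $\vol{E_{i,t}}=v_i+t+O(t^2)$ and $\pp_C(E_{i,t})=I_C(v_i)+H_i\,t+O(t^2)$ (this is the computation behind the concavity statement of \cite{bay-rosal} invoked in Proposition~\ref{prp: I C is concave conical}; here $H_i$ is normalized so that $\tfrac{d}{dt}\pp_C(E_{i,t})=H_i\,\tfrac{d}{dt}\vol{E_{i,t}}$, and a different normalization changes $H_i$ only by a fixed dimensional factor, harmless for a boundedness statement). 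Using $E_{i,t}$ as a competitor of volume $\vol{E_{i,t}}$, reparametrizing by volume near $v_i$, and letting $t\to 0^{\pm}$, I obtain
\[
D^{+}I_C(v_i)\le H_i\le D^{-}I_C(v_i),
\]
where $D^{\pm}$ denote the right and left derivatives of $I_C$.

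Next I would extract the two estimates I need from concavity of $I_C$. Since $I_C$ is non-decreasing (Proposition~\ref{prp: I C is concave conical}), $D^{+}I_C(v_i)\ge 0$, hence $H_i\ge 0$. Also $\lim_{v\to 0^{+}}I_C(v)=0$, because $I_C(v)\le I_H(v)=I_H(1)\,v^{n/(n+1)}$ by Remark~\ref{rem:half-plane} and \eqref{eq:isopkon}; concavity of $I_C$ on $(0,+\infty)$ together with this boundary value yields the chord bound $I_C(u)\ge (u/v_i)\,I_C(v_i)$ for $0<u<v_i$, and letting $u\to v_i^{-}$ this gives $D^{-}I_C(v_i)\le I_C(v_i)/v_i$. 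Combining, $0\le H_i\le I_C(v_i)/v_i$.

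Finally, multiplying by $v_i^{1/(n+1)}$ gives $0\le H_i v_i^{1/(n+1)}\le I_C(v_i)/v_i^{n/(n+1)}\le I_H(1)$, using once more $I_C(v_i)\le I_H(v_i)=I_H(1)\,v_i^{n/(n+1)}$ (Remark~\ref{rem:half-plane}; recall $\vol{C}=+\infty$). Thus $\{H_i v_i^{1/(n+1)}\}_{i\in\nn}$ is bounded; one could equally use the sharper asymptotics $I_C(v)/v^{n/(n+1)}\to I_{C_\infty}(1)$ of Theorem~\ref{thm:optimaconi} in place of $I_C\le I_H$. The main point requiring care is the sign $H_i\ge 0$, which produces the two-sided bound and relies on $I_C$ being non-decreasing rather than merely concave; everything else is routine once the first-variation inequality and concavity of $I_C$ are in hand, and in particular no use is made of the Hausdorff convergence of Theorem~\ref{thm:convrescnoi} — this lemma is an \emph{input} to the subsequent Allard-type improvement, not a consequence of it.
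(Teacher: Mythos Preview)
Your proof is correct and uses the same approach as the paper: the first-variation inequality $H_i\le (I_C')_-(v_i)$, concavity of the profile to bound the left derivative by the difference quotient $I_C(v_i)/v_i$, and the upper growth $I_C(v)\le M\,v^{n/(n+1)}$. Your route is in fact a little more direct than the paper's, which detours through $Y_C=I_C^{(n+1)/n}$ and invokes the lower bound $I_C\ge m\,v^{n/(n+1)}$ from Theorem~\ref{thm:optimaconi}; you avoid both, and you also make explicit the lower bound $H_i\ge 0$ (from monotonicity of $I_C$), which the paper leaves implicit.
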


\begin{proof}
It is known that the mean curvature $H$ of the boundary of an isoperimetric region of volume $v$ satisfies $H\le (I_C')_-(v)$, where $(I_C')_-$ is the left derivative of the concave function $I_C$. Observe that there are constants $m$, $M>0$ such that
\[
m v^{n/(n+1)}\le I_C(v)\le M v^{n/(n+1)}, \qquad \text{for large}\ v.
\]
The left inequality follows from inequality \eqref{eq:icgeicinfty}, $I_C\ge I_{C_\infty}$, and it is indeed true for any $v>0$. The second one follows from \eqref{eq:optimanondegen}, $\lim_{v\to\infty} (I_{C_\infty}^{-1}I_C)(v)=1$.

For large $v$ we have
\[
v^{1/(n+1)}H\le \bigg(\frac{1}{m}\bigg)^{1/n}I_C(v)^{1/n}(I_C')_-(v)=
\bigg(\frac{1}{m}\bigg)^{1/n}\bigg(\frac{n}{n+1}\bigg)\big(Y_C\big)'_-(v),
\]
where $Y_C=I_C^{(n+1)/n}$. Hence the estimate
\[
(Y_C)'_-(v)=\lim_{h\to 0^+}\frac{Y_C(v-h)-Y_C(v)}{h}\le \frac{Y_C(v)}{v}\le M^{(n+1)/n}.
\]
proves the result.
\end{proof}

\section{Large isoperimetric regions in conically bounded convex bodies of revolution}
\label{sec:revolution}

In this Section we consider regular conically bounded sets of revolution in $\rr^{n+1}$,  generated by a smooth convex function $f:[0,+\infty)\to\rr^+$ with $f(0)=f'(0)=0$. We may think of $f$ as the restriction to $[0,+\infty)$ of a smooth convex function $f:\rr\to\rr^+$ satisfying $f(x)=f(-x)$. For any $n\in\nn$, the function $f$ defines a convex body of revolution $C_f\subset\rr^{n+1}$ as the set of points $(x,y)\in\rr^n\times \rr$ satisfying the inequality $y\ge f(|x|)$. As we shall see, the conical boundedness condition is equivalent to the existence of a constant $a>0$ so that
\[
\lim_{x\to\infty}(f(x)-ax)=0.
\]
This implies that the line $y=ax$ is an asymptote of the function $f$. For such a function, we have
\[
\lim_{x\to\infty}\frac{f(x)}{x}=a.
\]
and L'H\^opital's Rule implies
\[
\lim_{x\to\infty} f'(x)=\lim_{x\to\infty}\frac{f(x)}{x}=a,
\]
and
\[
\lim_{x\to\infty} xf''(x)=\lim_{x\to\infty}\frac{f'(x)}{\log(x)}=0.
\]

We have the following

\begin{lemma}
\label{lem:foliation}
Given a smooth convex function $f:[0,+\infty)\to\rr^+$ such that $f'(0)=0$ and $\lim_{x\to+\infty}(f(x)-ax)=0$ for some constant $a>0$, we have
\begin{enumerate}
\item[(i)] The set $C_f=\{x,y)\in\rr^n\times\rr:y\ge f(|x|)\}$ is conically bounded with asymptotic cone at infinity $(C_f)_\infty=\{(x,y)\in\rr^n\times\rr: y\ge a|x|\}$.
\item[(ii)] There exists a compact set $K\subset C_f$ so that $C_f\setminus K$ is foliated by spherical caps meeting $\ptl C_f$ in an orthogonal way.
\item[(iii)] The mean curvature of the spherical caps is a non-increasing function $($in the unbounded direction$)$ and converges to $0$.
\end{enumerate}
\end{lemma}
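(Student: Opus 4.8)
### Proof proposal

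The plan is to establish the three items essentially in order, deriving (ii) and (iii) from an explicit construction of the foliating spherical caps via an ODE/implicit-function argument.

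For item (i), I would argue exactly as in the proof of Lemma~\ref{lem:C_infty=C^infty}. Since $f$ is convex with $f(0)=f'(0)=0$ and $\lim_{x\to\infty}(f(x)-ax)=0$, Lemma~\ref{lem:convexasymp} applies and gives that the line $y=ax$ is an asymptote from below, with $f(x)\le ax$ for all $x\ge 0$ (more precisely $f(x)-ax$ is non-increasing and tends to $0$, so it is $\ge 0$; the inequality $f\ge a|x|$ follows). Hence the exterior cone $C^{\infty}=\{y\ge a|x|\}$ contains $C_f$, it is non-degenerate, and the slices satisfy $\rho((C_f)_t,u)\to\rho((C^{\infty})_t,u)$ uniformly in $u\in\esf^{n-1}$ because by rotational symmetry this reduces to the one-variable statement $f^{-1}(t)/(t/a)\to 1$ as $t\to\infty$, which is immediate from $f(x)/x\to a$. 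So $C_f$ is conically bounded, and by Lemma~\ref{lem:C_infty=C^infty} its asymptotic cone coincides with $C^{\infty}=\{y\ge a|x|\}$ up to translation; since the slices are centered balls the vertex is the origin.

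For item (ii), I would look for spherical caps of the form $S_\rho=\{(x,y): |(x,y)-(0,c(\rho))|=\rho,\ y\le c(\rho)\}\cap C_f$, i.e. lower hemispheres of Euclidean spheres centered on the axis, parametrized so that $S_\rho$ meets $\ptl C_f=\{y=f(|x|)\}$ orthogonally. Orthogonality at the contact circle $|x|=x(\rho)$ is the condition that the sphere's center lie on the boundary normal line at that point; writing this out gives one scalar equation $G(c,\rho,x)=0$ together with the incidence equation $f(x)=c-\sqrt{\rho^2-x^2}$ (or rather $(x,f(x))$ lies on the sphere). This is a system of two equations for the two unknowns $c,x$ in terms of $\rho$. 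For $\rho$ large I would solve it by the Implicit Function Theorem after passing to the rescaled picture: as $\rho\to\infty$ the relevant contact point moves out along the boundary where $f'\to a$ and $xf''\to 0$, so the limiting configuration is orthogonal contact of a large sphere with the exact cone $\{y=a|x|\}$, for which the solution is explicit and non-degenerate (the derivative of the defining map is invertible). The nonvanishing of the Jacobian there, pulled back, lets me solve for a smooth family $\rho\mapsto(c(\rho),x(\rho))$ for $\rho\ge\rho_0$, and one checks these caps are pairwise disjoint and sweep out $C_f$ minus the compact region $K$ below $S_{\rho_0}$. The foliation property (disjointness and covering) follows from monotonicity of $c(\rho)-\sqrt{\rho^2-x(\rho)^2}$, the "height" of the cap's lowest point, which I would verify from the ODE the family satisfies.

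For item (iii), the mean curvature of $S_\rho$ is $H(\rho)=n/\rho$ up to a constant factor, so convergence to $0$ is immediate once $\rho\to\infty$ along the foliation, and monotonicity in the unbounded direction amounts to showing $\rho$ is an increasing function of the height parameter, i.e. caps further out have larger radius — again read off from the IFT solution and the sign of $x'(\rho)$, $c'(\rho)$. The main obstacle I anticipate is item (ii): verifying that the Jacobian of the contact-plus-orthogonality system is nonzero (equivalently, that the limiting cone configuration is a transverse zero) and controlling the error terms coming from $f''$ uniformly as $\rho\to\infty$, using $xf''(x)\to 0$, so that the IFT can be applied on an unbounded range of $\rho$ rather than just locally. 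Items (i) and (iii) are then short.
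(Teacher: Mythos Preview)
For (i) and (iii) your plan matches the paper. In (i) you momentarily write $f(x)\le ax$ before correcting to $f(x)\ge ax$; only the latter is right (it gives $C_f\subset C^\infty$), and your argument for it --- $f(x)-ax$ is nonincreasing with limit $0$, hence nonnegative --- is the correct one.

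For (ii) the paper takes a more direct route that dissolves the obstacle you anticipate. Rather than parametrizing by the radius $\rho$ and invoking the implicit function theorem after a rescaling to the cone, the paper parametrizes by the contact abscissa $x$ (the $|x|$-coordinate of the circle where the cap meets $\partial C_f$). The orthogonality condition says the radial vector $(x,f(x)-c)$ is parallel to the boundary tangent $(1,f'(x))$, and this gives \emph{closed-form} expressions
\[
c(x)=f(x)-x f'(x),\qquad r(x)=x\,(1+f'(x)^2)^{1/2},
\]
so no IFT is needed at all. The foliation condition then reduces to $g'(x)>0$ for $g(x)=c(x)+r(x)$; since $c'(x)=-xf''(x)\to 0$ and $r'(x)\to(1+a^2)^{1/2}$, one gets $\lim_{x\to\infty}g'(x)=(1+a^2)^{1/2}>0$, so the caps foliate for all $x$ beyond some $x_m$. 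Your IFT scheme, once you actually write the incidence-plus-orthogonality system down, collapses to this: the second equation solves for $c$ explicitly, and what remains is inverting $x\mapsto r(x)$, which is immediate since $r'(x)>0$ everywhere (both summands are nonnegative). So the ``Jacobian on an unbounded range'' worry is an artifact of parametrizing by $\rho$ instead of $x$. One small geometric correction: the cap lying in $C_f$ is the portion of the sphere \emph{above} the contact circle (apex at height $c(x)+r(x)$), not the lower half $\{y\le c\}$ you wrote; the center $c(x)=f(x)-xf'(x)$ sits below the contact height $f(x)$.

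For (iii) the paper simply notes that the mean curvature is $r(x)^{-1}=x^{-1}(1+f'(x)^2)^{-1/2}\to 0$, and monotonicity is immediate from $r'(x)\ge 0$.
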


\begin{proof}
Let us call $C^\infty=\{(x,y)\in\rr^n\times\rr : y\ge a |x|\}$. Observe that Lemma~\ref{lem:convexasymp} implies that $f(x)\ge a x $ for all $x\ge 0$ and so $C_f\subset C^\infty$. To show that the set $C_f$ is conically bounded we compute $\rho((C_f)_{f(x)},u)=x$, and $\rho((C^\infty)_{f(x)},u)=f(x)/a$  for all $u\in\esf^{n-1}$. Hence condition \eqref{eq:def ex asymp con} is satisfied. We know that the asymptotic cone $(C_f)_\infty$ is the epigraph of the convex function $f_\infty(x)=\lim_{\mu\to\infty} \mu^{-1}f(\mu x)=ax$. This implies (i).

Let us prove (ii). For any $x>0$, we consider the center $(0,c(x))$ and the radius $r(x)$ of the circle meeting the graph of $f$ orthogonally at the point $(x,f(x))$. We have
\[
c(x)=f(x)-x\,f'(x), \qquad r(x)=x\,(1+f'(x)^2)^{1/2}.
\]
It is easy to check that $c'(x)=-x\,f''(x)\le 0$. If we define $g(x)=c(x)+r(x)$ and fix $x_0>0$, the circles around the one with center $(0,c(x_0))$ and radius $r(x_0)$ form a local foliation if $g'(x_0)>0$. Since
\begin{equation*}
g'(x)=x\,f''(x)\,\bigg(-1+\frac{f'(x)}{(1+f'(x)^2)^{1/2}}\bigg)+(1+f'(x)^2)^{1/2},
\end{equation*}
taking limits we obtain 
\[
\lim_{x\to\infty} g'(x)=(1+a^2)^{1/2}>0.
\]
So we conclude that there exists $x_m>0$ so that the circles corresponding to points $x>x_m$ form a foliation meeting the boundary of the convex set in an orthogonal way. The corresponding bodies of revolution exhibit the same property. In these cases, there is a foliation outside a compact set whose leaves are spherical caps meeting orthogonally the boundary of the convex set.

To prove (iii), simply take into account that the mean curvature of the spheres is $r(x)^{-1}=x^{-1}(1+f'(x)^2)^{-1/2}$ and $\lim_{x\to\infty} r(x)^{-1}=0$.
\end{proof}

\begin{remark}
\label{rem:half-space}
Let $C$ be a convex body of revolution generated by a convex function $f$ satisfying $f'(0)=0$. If we assume $\lim_{x\to\infty} x^{-1}f(x)=0$ then $f\equiv 0$. This follows since the function $f'$ is non-decreasing and satisfies $\lim_{x\to\infty} f'(x)=0$. Hence a convex body of revolution cannot be asymptotic to a half-space unless it is a half-space.
\end{remark}

Let $(M,g_0)$ be a smooth Riemannian manifold with smooth boundary. Assume that $\Sg$ is an embedded hypersurface with constant mean curvature $H_\Sg$ and that $\ptl\Sg$ is contained in $\ptl M$ and meets $\ptl M$ in an orthogonal way. We shall assume that $\Sg$ is two-sided and so there is a unit normal $N_\Sg$ to $\Sg$. The unit conormal to $\ptl\Sg$ will be denoted by $\nu_\Sg$.

Let $X$ be a $C^\infty$ complete vector field in $M$ so that $X|_{\Sg}=N$ and $X|_{\ptl M}$ is tangent to $\ptl M$. The flow $\{\varphi_t\}_{t\in\rr}$ of $X$ preserves the boundary of $M$ and allows us to define ``graphs'' over $\Sg$. If $u\in C^{2,\alpha}(\Sg)$ has small enough $C^{2,\alpha}$ norm, then the graph of $u$, denoted by $\Sg(u)$, is defined as the set $\{\varphi_{u(p)}(p):p\in\Sg\}$. For small $C^{2,\alpha}$ norm, $\Sg(u)$ is an embedded hypersurface. Given a Riemannian metric $g$ on $M$, we shall denote the unit normal to $\Sg(u)$ in $(M,g)$ by $N^g_{\Sg(u)}$ and shall drop $g$ when $g=g_0$. The unit conormal will be denoted by $\nu_{\Sg(u)}^g$. Given $g$, the inner unit normal to the boundary of $M$ will be denoted by $N_{\ptl M}^g$. The laplacian on $\Sg$, the Ricci curvature tensor, the second fundamental form of $\ptl M$ with respect to an inner normal, and the squared norm of the second fundamental form, with respect to a Riemannian metric $g$, will be denoted by $\Delta_\Sg^g$, $\Ric^g$, $\II^g$, $|\sg^g|^2$, respectively. We shall drop the superscript $g$ when $g=g_0$.

We shall use the following well-known result, compare with \cite[Prop.~10]{ambrozio}

\begin{proposition}
\label{prop:ift}
Let $(M,g_0)$ be a Riemannian manifold with smooth boundary and $\Sg\subset M$ an embedded hypersurface with constant mean curvature $H_\Sg$ such that $\ptl\Sg\subset\ptl M$ meets $\ptl M$ in an orthogonal way. Assume that the free boundary problem
\begin{equation}
\label{eq:freeboundary}
\begin{split}
\Delta_\Sg u+(\Ric(N,N)+|\sg|^2)\,u&=0, \qquad \text{on}\ \Sg
\\
\frac{\ptl u}{\ptl \nu_\Sg}+\II(N,N)\,u&=0, \qquad \text{on}\ \ptl \Sg
\end{split}
\end{equation}
has just the trivial solution. Then there is a neighborhood $U$ of $g_0$ in $\text{Riem}(M)$ and a neighborhood $I$ of $H_\Sg$ so that for $(g,H)\in U\times I$, there is just one graph of class $C^{2,\alpha}$ with constant mean curvature $H$ meeting $\ptl M$ in an orthogonal way in the Riemannian manifold $(M,g)$.
\end{proposition}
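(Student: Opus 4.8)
\emph{Proof proposal.} This is a standard application of the Implicit Function Theorem in Hölder spaces; the plan follows the scheme of \cite[Prop.~10]{ambrozio}.

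First I would set up the natural defining map. Fix $\alpha\in(0,1)$ and, to keep all quantities in the Hölder spaces below, take metrics of class $C^{k,\alpha}$ with $k\ge 2$ (the smooth case being recovered afterwards). Since $X|_{\ptl M}$ is tangent to $\ptl M$, the flow $\{\varphi_t\}$ of $X$ preserves $\ptl M$, so for $u\in C^{2,\alpha}(\Sg)$ of small norm the graph $\Sg(u)=\{\varphi_{u(p)}(p):p\in\Sg\}$ is an embedded $C^{2,\alpha}$ hypersurface with $\ptl\Sg(u)\subset\ptl M$ automatically; moreover $\Sg(u)$ meets $\ptl M$ orthogonally in $(M,g)$ if and only if the function $\langle N^g_{\Sg(u)},N^g_{\ptl M}\rangle$ vanishes along $\ptl\Sg(u)$. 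I would then define, on a neighborhood of $(g_0,H_\Sg,0)$ in $\text{Riem}(M)\times\rr\times C^{2,\alpha}(\Sg)$,
\[
\mathcal F(g,H,u)=\Big(H^g_{\Sg(u)}-H,\ \langle N^g_{\Sg(u)},N^g_{\ptl M}\rangle\big|_{\ptl\Sg}\Big),
\]
with values in $C^{0,\alpha}(\Sg)\times C^{1,\alpha}(\ptl\Sg)$ (all objects pulled back to $\Sg$, resp.\ $\ptl\Sg$, via $\varphi_{u(\cdot)}$). One has $\mathcal F(g_0,H_\Sg,0)=(0,0)$, and $\mathcal F$ is smooth as a map of Banach spaces: $H^g_{\Sg(u)}$ is a quasilinear second-order expression in $u$ with coefficients depending smoothly on $g$ and the ambient geometry, while the second component depends smoothly on $g$ and on the $1$-jet of $u$ along $\ptl\Sg$.

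Next I would compute the partial differential $L:=D_u\mathcal F(g_0,H_\Sg,0)$. The first variation of mean curvature under normal graphs produces, in the first slot, the Jacobi operator $u\mapsto\Delta_\Sg u+(\Ric(N,N)+|\sg|^2)\,u$, and the classical first variation of the angle a hypersurface makes with a fixed barrier produces, in the second slot, $u\mapsto \ptl u/\ptl\nu_\Sg+\II(N,N)\,u$ on $\ptl\Sg$ (using here that $N$ is tangent to $\ptl M$ along $\ptl\Sg$, so that $\II(N,N)$ is defined). Hence $L$ is precisely the operator of the free boundary problem \eqref{eq:freeboundary}.

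Then I would argue that $L\colon C^{2,\alpha}(\Sg)\to C^{0,\alpha}(\Sg)\times C^{1,\alpha}(\ptl\Sg)$ is a Banach space isomorphism. Indeed $L$ is a second-order elliptic operator with a regular, non-tangential (Neumann-type) oblique derivative boundary condition, so it satisfies the Lopatinski--Shapiro complementing condition; consequently Schauder estimates hold and $L$ is Fredholm of index zero. By hypothesis \eqref{eq:freeboundary} has only the trivial solution, i.e.\ $\ker L=\{0\}$, and since the Fredholm index of $L$ vanishes, $L$ is also surjective, hence invertible. (The boundary value problem is moreover formally self-adjoint, which gives surjectivity directly.) With $L$ an isomorphism, the Implicit Function Theorem applied to $\mathcal F$ at $(g_0,H_\Sg,0)$ yields neighborhoods $U\ni g_0$, $I\ni H_\Sg$, $\mathcal V\ni 0$ and a smooth map $(g,H)\mapsto u_{g,H}\in\mathcal V$ with $u_{g_0,H_\Sg}=0$ such that $u_{g,H}$ is the \emph{unique} zero of $\mathcal F(g,H,\cdot)$ in $\mathcal V$; shrinking $\mathcal V$ so that every such graph is embedded, $\Sg(u_{g,H})$ is the unique $C^{2,\alpha}$ graph over $\Sg$ of constant mean curvature $H$ meeting $\ptl M$ orthogonally in $(M,g)$. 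The genuinely non-formal points, and hence the main obstacle, are identifying the linearization of the orthogonality condition with the Robin operator $\ptl u/\ptl\nu_\Sg+\II(N,N)\,u$, and checking that the resulting boundary value problem is a regular elliptic problem (so that the trivial-kernel hypothesis upgrades to invertibility); the rest is the bookkeeping that makes $\mathcal F$ a differentiable map between the chosen Hölder spaces, which is the reason for working with metrics of class $C^{k,\alpha}$, $k\ge 2$ (or $C^\infty$).
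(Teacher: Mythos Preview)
Your proposal is correct and follows essentially the same route as the paper: define a map from $(g,H,u)$ to the pair (mean curvature defect, orthogonality defect), identify its linearization in $u$ with the Jacobi operator together with the Robin boundary operator of \eqref{eq:freeboundary}, use elliptic theory to see this is Fredholm of index zero so that the trivial-kernel hypothesis gives invertibility, and then apply the Implicit Function Theorem. The only cosmetic differences are that the paper encodes orthogonality via $g(\nu^g_{\Sg(u)},N^g_{\ptl M})$ rather than your $\langle N^g_{\Sg(u)},N^g_{\ptl M}\rangle$, and invokes the Fredholm alternative and Schauder estimates (citing Gilbarg--Trudinger) more tersely than your Lopatinski--Shapiro discussion; the substance is identical.
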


\begin{proof}
The proof is an application of the Implicit Function Theorem in Banach spaces. Consider the map $\Phi:(\text{Riem}(M)\times\rr)\times C^{2,\alpha}(\Sg)\longrightarrow C^{0,\alpha}(\Sg)\times C^{1,\alpha}(\ptl\Sg)$ defined by
\[
\Phi(g,H,u)=(H^g_{\Sg(u)}-H_\Sg,g(\nu^g_{\Sg(u)},N_{\ptl M}^g)).
\]
The partial derivative $D_2\Phi$ with respect to the factor $C^{2,\alpha}(\Sg)$ is given by
\[
-D_2\Phi(g_0,H_0,0)(v)=\big(\Delta_\Sg v+(\text{Ric}(N,N)+|\sg|^2)\,v, \frac{\ptl v}{\ptl \nu_\Sg}+\text{II}(N_\Sg,N_\Sg)\,v\big).
\]
This map is injective by assumption and surjective by the Fredholm alternative. It is continuous and an isomorphism by Schauder estimates \cite[Theorem~6.30 (6.77)]{MR1814364}. Hence we can apply the Implicit Function Theorem for Banach spaces to conclude the proof.
\end{proof}

We shall also need the following

\begin{lemma}[{\cite[Corollary~3.4]{MR1356151}}]
\label{lem:neumann}
Let $\esf^n(R)\subset\rr^{n+1}$ and $B(r)\subset\esf(R)$ be a geodesic ball (spherical cap) of radius $0<r<\pi R/2$. Then the first nonzero Neumann eigenvalue $\mu(r)$ of the Laplacian in $B(r)$ satisfies $\mu(r)>nR^{-2}$.
\end{lemma}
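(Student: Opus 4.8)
The plan is to reduce to the unit sphere by scaling and then separate variables. Replacing $g$ by $R^{-2}g$ gives $\mu(r)=R^{-2}\mu_1(r/R)$, where $\mu_1(\rho)$ denotes the first nonzero Neumann eigenvalue of the geodesic ball $B(\rho)\subset\esf^n=\esf^n(1)$; so, writing $\rho=r/R\in(0,\pi/2)$, it suffices to prove $\mu_1(\rho)>n$. Write the round metric on $\esf^n$ in geodesic polar coordinates about the center of the ball as $dt^2+\sin^2 t\,g_{\esf^{n-1}}$, so that $B(\rho)=\{t<\rho\}$, its boundary is the round sphere $\{t=\rho\}$ with outer normal $\partial_t$, and the Neumann condition reads $\phi'(\rho)=0$ after separation. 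Decomposing with respect to the spherical harmonics $Y_k$ on $\esf^{n-1}$, $\Delta_{\esf^{n-1}}Y_k=-k(k+n-2)Y_k$ with $k\ge 0$, the Neumann spectrum of $B(\rho)$ is the union over $k\ge 0$ of the Neumann spectra on $[0,\rho]$ of the Sturm--Liouville operators
\[
L_k\phi=-\frac{1}{\sin^{n-1}t}\big(\sin^{n-1}t\,\phi'\big)'+\frac{k(k+n-2)}{\sin^2 t}\,\phi,
\]
with $\phi$ regular at $0$ and $\phi'(\rho)=0$; the zero eigenvalue occurs only for $k=0$. Hence $\mu_1(\rho)$ is the minimum over $k$ of the first (nonzero, when $k=0$) Neumann eigenvalue of $L_k$, and it is enough to bound each of these strictly below by $n$.

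For $k\ge 2$ this is immediate: since $\sin^2 t\le 1$ on $[0,\rho]$, the Rayleigh quotient of $L_k$ is at least $k(k+n-2)\ge 2n>n$. For $k=1$ I would use the ground state substitution: $\phi_0(t)=\sin t$ solves $L_1\phi_0=n\phi_0$ and is positive on $(0,\rho]$, so writing a competitor as $\phi=\phi_0\psi$ and integrating by parts (the boundary term at $t=0$ vanishing because finite-energy competitors decay like $\sin t$ there) yields the identity
\[
\int_0^\rho\Big(\phi'^2+\frac{n-1}{\sin^2 t}\,\phi^2\Big)\sin^{n-1}t\,dt=n\int_0^\rho\phi^2\sin^{n-1}t\,dt+\sin^n\!\rho\,\cos\rho\;\psi(\rho)^2+\int_0^\rho\sin^{n+1}\!t\;(\psi')^2\,dt .
\]
Since $\cos\rho>0$ for $\rho<\pi/2$, the last two terms are nonnegative and vanish only when $\phi\equiv 0$, so every nonzero $\phi$ has Rayleigh quotient $>n$; thus the first Neumann eigenvalue of $L_1$ exceeds $n$.

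For $k=0$ (the radial modes) the crude bound is useless, so I would argue as follows. If $\phi$ is a radial Neumann eigenfunction on $B(\rho)$ with eigenvalue $\mu\neq 0$, a direct computation (differentiate $L_0\phi=\mu\phi$ in $t$) shows that $h=\phi'$ satisfies $L_1 h=\mu h$ on $[0,\rho]$ together with $h(0)=h(\rho)=0$; moreover $\phi$ is smooth at the center, so $h$ vanishes linearly at $t=0$ and therefore lies in the form domain of $L_1$. Hence $\mu$ is a \emph{Dirichlet} eigenvalue of $L_1$ on $[0,\rho]$. Running the same ground state substitution, now with the extra condition $\psi(\rho)=0$ so that the boundary term drops out, shows that the smallest Dirichlet eigenvalue of $L_1$ is $>n$; hence $\mu>n$. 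Combining the three cases gives $\mu_1(\rho)>n$, which is the assertion.

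All computations involved are elementary; the one place that needs care is the radial case $k=0$, precisely because Neumann eigenvalues are not monotone under domain inclusion and the pointwise estimate $\sin^2 t\le 1$ gives nothing when $k=0$. The device of differentiating a radial eigenfunction to move it into the $k=1$ Dirichlet problem is what resolves this, and the only technical point there is to verify that $h=\phi'$ genuinely belongs to the form domain of $L_1$ at the singular endpoint $t=0$ --- which it does, since $h$ vanishes linearly there and $(n-1)\sin^{-2}t\,h^2$ is then integrable against $\sin^{n-1}t$.
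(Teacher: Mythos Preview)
The paper does not prove this lemma at all: it is quoted verbatim from the literature as \cite[Corollary~3.4]{MR1356151}, with no argument supplied. So there is nothing to compare your approach \emph{against} in the paper itself; what you have written is a self-contained proof where the authors chose simply to cite one.

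Your argument is correct. The scaling reduction to $\esf^n(1)$ is fine, and the separation of variables into the Sturm--Liouville operators $L_k$ is standard. The case $k\ge 2$ is handled by the crude potential bound. For $k=1$ your ground-state identity with $\phi_0=\sin t$ is computed correctly; the boundary term at $t=\rho$ is $\sin^n\rho\,\cos\rho\,\psi(\rho)^2\ge 0$ since $\rho<\pi/2$, and the term at $t=0$ vanishes because admissible $k=1$ competitors satisfy $\phi(t)=O(t)$, making $\psi$ bounded. The key device in the radial case $k=0$ --- differentiating a nonconstant Neumann eigenfunction $\phi$ of $L_0$ to produce a nontrivial Dirichlet eigenfunction $h=\phi'$ of $L_1$ with the same eigenvalue --- is a clean way around the non-monotonicity of Neumann eigenvalues; the verification that $L_1 h=\mu h$ is just the commutation identity $[d/dt,\,L_0]=\big((n-1)/\sin^2 t\big)\,d/dt$ composed with $L_0\phi=\mu\phi$, and $h(0)=0$ because a smooth radial function is even in $t$. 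Since the first Dirichlet eigenvalue of $L_1$ is attained and your identity forces any minimiser with $\psi(\rho)=0$ and $\psi'\equiv 0$ to vanish, that eigenvalue is strictly above $n$, hence so is $\mu$. One small caveat: your separation of variables tacitly assumes $n\ge 2$; for $n=1$ the ball is an arc and the inequality $\mu_1(\rho)=(\pi/2\rho)^2>1$ is immediate.
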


Now we are in position to prove the main result in this Section

\begin{theorem}
\label{thm:foliation}
Let $C$ be a conically bounded convex body of revolution. Then there exists $v_0>0$ such that any isoperimetric region $E\subset C$ of volume $|E|\ge v_0$ is a spherical cap meeting the boundary of $C$ in an orthogonal way.
\end{theorem}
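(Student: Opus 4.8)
The strategy is to combine the convergence result for large isoperimetric regions (Theorem~\ref{thm:convrescnoi}) with the explicit foliation by spherical caps from Lemma~\ref{lem:foliation} and a uniqueness statement coming from the Implicit Function Theorem (Proposition~\ref{prop:ift}). The idea is that, after rescaling, a sequence of large isoperimetric regions converges in Hausdorff distance (together with its free boundary) to a spherical cap in the asymptotic cone $C_\infty=C^\infty$, which is a half-cone over a sphere; and, on the other hand, Lemma~\ref{lem:foliation} produces an honest foliation of $C\setminus K$ by spherical caps meeting $\partial C$ orthogonally whose mean curvatures decrease to $0$. The contradiction argument will show that a large isoperimetric boundary, being $C^{2,\alpha}$-close to a leaf of this foliation, must actually coincide with a leaf.

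\textbf{Step 1 (Reduction to a local uniqueness statement).} Suppose, for contradiction, that there is a sequence $E_i\subset C$ of isoperimetric regions with $v_i=|E_i|\to\infty$ and none of the $E_i$ a spherical cap. Rescale by $\lambda_i\to 0$ with $|\lambda_i E_i|=1$; by Theorem~\ref{thm:convrescnoi} the sets $\Omega_i=\lambda_i E_i$ and their free boundaries $\partial_{\lambda_i C}\Omega_i$ converge in Hausdorff distance to a geodesic ball $B\subset C_\infty$ centered at the vertex and its bounding spherical cap $\Sigma$. By Lemma~\ref{lem:boundmeancurv}, the mean curvatures satisfy $H_iv_i^{1/(n+1)}$ bounded, i.e.\ the rescaled boundaries have mean curvature bounded and (passing to a subsequence) converging to the mean curvature of $\Sigma$. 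Using the Allard-type regularity/compactness alluded to after Theorem~\ref{thm:convrescnoi} (the uniform mean-curvature bound of Lemma~\ref{lem:boundmeancurv} plus the lower density bound of Corollary~\ref{cor:lwrdnbnd}), the convergence upgrades from Hausdorff to $C^{2,\alpha}$: the rescaled free boundaries are eventually graphs of functions $u_i$ over $\Sigma$ with $\|u_i\|_{C^{2,\alpha}}\to 0$, meeting $\partial(\lambda_i C)$ orthogonally. Undoing the rescaling, for $i$ large the boundary $\partial_C E_i$ is a $C^{2,\alpha}$-small graph over a large spherical cap $\Sigma_i$ in the foliation of $C\setminus K$ provided by Lemma~\ref{lem:foliation}(ii).

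\textbf{Step 2 (Applying the Implicit Function Theorem on the half-sphere).} On the model spherical cap $B(r)\subset\esf^n(R)$ in $\rr^{n+1}$ (which is isometric to the leaves of the foliation up to scaling, since $C_\infty$ is a cone of revolution and the leaves meet $\partial C$ orthogonally), we must check that the free-boundary Jacobi operator \eqref{eq:freeboundary} has only the trivial solution. Here $\Sigma=B(r)$ is totally umbilical in $\esf^n(R)$ with $|\sigma|^2=n/R^2$ constant, $\Ric(N,N)=0$ since the ambient is flat $\rr^{n+1}$ (the normal $N$ points out of the cap into $\rr^{n+1}$), and the boundary term $\II(N,N)$ vanishes because $\Sigma$ meets the cone boundary orthogonally and the cone boundary is ruled (flat in the radial direction). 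So \eqref{eq:freeboundary} reduces to the Neumann eigenvalue problem $\Delta_\Sigma u+(n/R^2)u=0$ on $B(r)$ with $\partial u/\partial\nu=0$ on $\partial B(r)$; since $r<\pi R/2$, Lemma~\ref{lem:neumann} gives $\mu(r)>n/R^2$, so the only solution is $u\equiv 0$. Thus Proposition~\ref{prop:ift} applies: there is a neighborhood $U\times I$ of $(g_0,H_\Sigma)$ such that in each $(M,g)$ with $g\in U$ there is a \emph{unique} $C^{2,\alpha}$ graph of constant mean curvature $H\in I$ meeting $\partial M$ orthogonally — namely $u\equiv 0$ itself.

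\textbf{Step 3 (Conclusion).} Identify $M$ with a neighborhood of a leaf $\Sigma_i$ inside $C$; as $i\to\infty$ the rescaled metrics $\lambda_i^2 g_{\mathrm{eucl}}$ on $C$ converge (in $C^{2,\alpha}$ on compact sets, after the Hausdorff/varifold convergence and smoothing of the boundary of $C$ away from $K$, which is smooth by hypothesis) to the flat metric on $C_\infty$, and the mean curvatures $H_i$ (rescaled) converge to $H_\Sigma$. For $i$ large the pair $(\text{rescaled metric},\text{rescaled }H_i)$ lies in $U\times I$, and $\partial_C E_i$ (rescaled) is a $C^{2,\alpha}$-small CMC graph meeting $\partial C$ orthogonally; by the uniqueness in Proposition~\ref{prop:ift} it must be the zero graph, i.e.\ it coincides with the leaf $\Sigma_i$. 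Hence $\partial_C E_i$ is a spherical cap for $i$ large, and by the orthogonality and connectedness (Proposition~\ref{prp: I C is concave conical}) $E_i$ is the region bounded by that cap — contradicting the assumption. This proves the existence of $v_0$.

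\textbf{Main obstacle.} The delicate point is Step~1: upgrading the Hausdorff convergence of Theorem~\ref{thm:convrescnoi} to $C^{2,\alpha}$-graphical convergence near a \emph{moving} leaf of the foliation, uniformly as the leaves march off to infinity. This requires the Allard-type estimates for varifolds together with the uniform mean-curvature bound (Lemma~\ref{lem:boundmeancurv}), the uniform lower density bound (Corollary~\ref{cor:lwrdnbnd}), and uniform control on the geometry of $\partial C$ near the relevant leaves — all of which must be packaged so that the single neighborhood $U\times I$ from Proposition~\ref{prop:ift}, obtained once on the fixed model half-sphere, captures all the large rescaled configurations simultaneously. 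Handling the free-boundary contribution (orthogonality at $\partial C$) in this Allard-type improvement, rather than for closed CMC surfaces, is the technically heaviest piece.
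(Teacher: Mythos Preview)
Your overall strategy coincides with the paper's: rescale, use Theorem~\ref{thm:convrescnoi} and Lemma~\ref{lem:boundmeancurv} plus Allard-type regularity to upgrade to graphical convergence over the model cap $\Sigma\subset C_\infty$, verify via Lemma~\ref{lem:neumann} that the linearized problem \eqref{eq:freeboundary} has only the trivial solution, and invoke Proposition~\ref{prop:ift}. The computation $\Ric(N,N)=0$, $|\sigma|^2=n/R^2$, $\II(N,N)=0$ is exactly what the paper does.

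There is, however, a genuine slip in how you apply the uniqueness from Proposition~\ref{prop:ift}. At the end of Step~2 you write that for every $(g,H)\in U\times I$ the unique CMC graph ``is $u\equiv 0$ itself''. This is false: the Implicit Function Theorem produces, for each $(g,H)$ near $(g_0,H_\Sigma)$, a unique graph $u=u(g,H)$, and $u\equiv 0$ only at the basepoint $(g_0,H_\Sigma)$. Consequently, in Step~3 you cannot conclude directly that the rescaled $\partial_C E_i$ is the zero graph over an arbitrarily chosen leaf $\Sigma_i$: uniqueness only identifies two graphs that share the \emph{same} pair $(g,H)$.

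The missing ingredient (which the paper supplies) is to bring in a second competitor with the same mean curvature. By Lemma~\ref{lem:foliation}(iii) the mean curvatures of the foliation leaves decrease continuously to $0$, so for large $i$ there is a spherical cap $S_i\subset C$ from the foliation with mean curvature exactly $H_i$. After rescaling and pulling back through the diffeomorphisms onto the fixed model neighborhood of $\Sigma$, both $\partial E_i$ and $S_i$ become $C^{2,\alpha}$-small CMC graphs over $\Sigma$ with the \emph{same} constant mean curvature in the \emph{same} pulled-back metric. Now the uniqueness from Proposition~\ref{prop:ift} forces them to coincide, so $\partial_C E_i=S_i$ is a spherical cap meeting $\partial C$ orthogonally. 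Once you insert this matching-of-leaves step, your argument is complete and matches the paper's.
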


\begin{proof}
By Remark~\ref{rem:half-space}, the asymptotic cone of $C$ is not a half-space. Hence $C$ is generated by a convex function $f$ such that $\lim_{x\to\infty} x^{-1}f(x)=a>0$. The asymptotic cone of $C$ is $C_\infty$ is the convex body of revolution generated by the function $f_\infty(x)=ax$.

Let $\{E_i\}_{i\in\nn}$ be a sequence of isoperimetric regions in $C$ with $|E_i|\to\infty$. By Theorem~\ref{thm:convrescnoi}, for $\la_i=v_i^{-1/(n+1)}$, the boundaries of $\la_iE_i$ converge in Hausdorff distance to a spherical cap $\Sg\subset\esf(R)$, of radius $0<r<\pi R/2$, inside the asymptotic cone of $C$. Moreover, we can find a sequence of diffeomorphisms $\varphi_i$ of class $C^\infty$ applying a small tubular neighborhood of $\Sg$ into a subset of $\la_iC$ containing the boundary of $\la_iE_i$. The mean curvature of the boundary of $\la_iE_i$ is given by $H_iv_i^{1/(n+1)}$, which is uniformly bounded by Lemma~\ref{lem:boundmeancurv}, and so it is the mean curvature of $\varphi^{-1}_i(\la_iE_i)$ computed with respect to the metric $\varphi^{-1}(g_0)$. The reduced boundary of $\varphi_i(\la_iE_i)$ is a stationary varifold with boundary. Since the perimeters of $\varphi_i(\la_iE_i)$ converge to the perimeter of $\Sg$, we can use \cite[Theorem~4.13]{MR863638} to get $C^{1,\delta}$-convergence of the boundaries. By elliptic regularity, the mean curvatures of the boundaries of $\varphi^{-1}_i(\la_iE_i)$, computed with respect to the metric $\varphi^*g_0$, also converge to the mean curvature of $\Sg$, and the boundary of $\varphi_i(\la_iE_i)$ is the graph of a $C^\infty$ function over $\Sg$ in the sense defined above.

The hypersurface $\Sg\subset C_\infty$ is the boundary of an isoperimetric region in $C_\infty$. On $\Sg$ we have $\Ric(N,N)+|\sg|^2=nR^{-2}$ and $\II(N,N)=0$. So the free boundary problem \eqref{eq:freeboundary} is given by
\begin{align*}
\Delta u+nR^{-2}u&=0, \qquad \text{on}\ \Sg,
\\
\frac{\ptl u}{\ptl \nu}&=0,\qquad \text{on}\ \ptl\Sg.
\end{align*}
By Lemma~\ref{lem:neumann} the first nonzero Neumann eigenvalue of the Laplacian on $\Sg$ is strictly larger than $nR^{-2}$, and so the only solution is $u=0$. Proposition~\ref{prop:ift} then implies that, for large enough $i\in\nn$ so that $\varphi^*g_0$ is close to $g_0$ and the mean curvature of the boundary of $\la_iE_i$ is close to the one of $\Sg$, there is only one such graph.

Consider now a sequence of spherical caps in $C$ with the same mean curvature as the one of $\ptl E_i$. Scaling down we have $C^\infty$ convergence to $\Sg$. By the uniqueness part of Proposition~\ref{prop:ift}, we obtain that $E_i$ is a spherical cap for $i$ large enough.
\end{proof}

\bibliography{convex}
\end{document}